\newtheorem{theorem}{Theorem}[section]
\newtheorem{example}{Example}
\newtheorem{lemma}[theorem]{Lemma}
\theoremstyle{definition}
\newtheorem{definition}[theorem]{Definition}
\newtheorem{remark}{Remark}
\newcommand{\inte }{{\rm int}\,}
\newcommand{\dom }{\,{\rm dom}\,}
\newcommand{\Id }{\mbox{Id} \,}
\title[Cone Conditions for Invariant Manifolds]
      {Cone Conditions and Covering Relations for
Topologically Normally Hyperbolic Invariant Manifolds}
\author[Maciej J. Capi\'nski and Piotr Zgliczy\'nski]{}
\subjclass{Primary: 34D10, 34D35; Secondary: 37C25}
 \keywords{Normally hyperbolic manifolds, covering relations, cone conditions, Brouwer degree}
 \email{mcapinsk@agh.edu.pl}
 \email{zgliczyn@ii.uj.edu.pl}
\thanks{The research of the first author was partially supported
by the Polish Ministry of Science and Higher Education.}
\thanks{The first author would like to thank the African Institute For Mathematical Sciences, Muizenberg, Cape Town, South Africa, where the work was partially developed.}
\thanks{The second author is supported by the Polish State Ministry of
            Science and Information Technology  grant N201 024 31/2163}
\thanks{Both authors are supported by the Polish State Ministry of
            Science and Information Technology  grant N201 543238}
\begin{document}
\maketitle

% Enter the first author's name and address:
\centerline{\scshape Maciej J. Capi\'nski }
\medskip
{\footnotesize
% please put the address of the first author
 \centerline{AGH University of Science and Technology, Faculty of Applied Mathematics}
%   \centerline{Other lines}
   \centerline{ al. Mickiewicza 30, 30-059 Krak\'ow, Poland}
} % Do not forget to end the {\footnotesize by the sign }

\medskip

\centerline{\scshape Piotr Zgliczy\'nski}
\medskip
{\footnotesize
 % please put the address of the second  and third author
 \centerline{ Jagiellonian University, Institute of Computer Science,}
  % \centerline{Other lines}
   \centerline{\L ojasiewicza  6, 30--348  Krak\'ow, Poland}
}

\bigskip

% The name of the associate editor will be entered by an editorial staff
% \centerline{(Communicated by the associate editor name)}

%The abstract of your paper
\begin{abstract}
We present a topological proof of the existence of invariant manifolds for maps with normally hyperbolic-like properties. 
The proof is conducted
 in the phase space of the system. In our approach we do not require that the map is a perturbation of some other
 map for which we already have an invariant manifold. We provide conditions which imply the existence of
  the manifold within an investigated region of the phase space. The required assumptions are formulated in
  a way which allows for rigorous computer assisted verification. We apply our method to obtain
   an invariant manifold within an explicit range of parameters for the rotating H\'enon map.
\end{abstract}

%TCIDATA{OutputFilter=latex2.dll}
%TCIDATA{Version=5.00.0.2606}
%TCIDATA{LaTeXparent=0,0,online-edit.tex}
\section{Introduction}

The goal of our paper is to present a topological 
proof of the existence of invariant manifolds for maps with normally hyperbolic type properties,
in a vicinity of an approximate invariant
manifold.  In our opinion there are two main advantages of our
approach: 1) we do not assume that the given map is a
perturbation of some other map for which we have a normally
hyperbolic invariant manifold, 2) the assumptions could be
rigorously checked with computer assistance if our
approximation of the invariant manifold is good enough.

Our results about invariant manifolds  are weaker
than the standard results for normal hyperbolicity \cite{BLZ5,Ch,HPS,Wi}. In fact we do not prove
any smoothness result and the existence of fibration of the stable
and unstable manifolds of the invariant manifold. We believe though that topological assumptions 
considered by us should be sufficient to prove normal hyperbolicity in the usual sense.
This paper should be considered as a first step towards this end.
In a subsequent publication we intend to improve our results in this
direction.

In the standard approach to the proof of  various invariant
manifold theorems all considerations are done in suitable function
spaces or sequences spaces, moreover the existence of the
invariant manifold for nearby map (or ODE) is always assumed, see
for example \cite{Ch,HPS,Wi} and the references given there.
Usually these proofs do not give any computable bounds for the
size of perturbation for which the invariant manifold
exists, with only exception known to us being the result of Bates, Lu and Zeng \cite{BLZ5}. 

In contrast to the above mentioned standard approach, in our
method  the whole proof is made in the phase space. This method of proof of the existence of
invariant manifolds is not entirely new, see
for example the proof of Jones \cite{J}  in the context of
slow-fast system of ODEs. But still he considered the perturbation
of some normally hyperbolic invariant manifold. In \cite{Ca} a somewhat similar 
approach has been applied to obtain a topologically normally hyperbolic invariant set.
 The result relied only on covering relations without the use of cone conditions. There 
it has not been shown that the invariant set is a manifold, hence the result was weaker than the one presented in this paper.  

The work is organised as follows. In Section \ref{sec:prel} we introduce notations and give preliminaries on vector bundles.
 In Section \ref{sec:ch-sets} we introduce central-hyperbolic sets (ch-sets), covering relations and cones. 
A ch-set will play the role of a region in which we suspect to find an invariant manifold. Covering relations 
will ensure the existence of an invariant set within a ch-set. In Section \ref{sec:main-th} we introduce cone conditions for maps 
and main results. Cone conditions combined with covering relations will give the existence of a normally hyperbolic-like invariant manifold 
inside of a ch-set. In Section \ref{sec:ver-cond} we show how cone conditions and covering relations can be verified in practice.
 In Section \ref{sec:rel-classic} we discuss how our result relates so far to the classical normally hyperbolic invariant manifold theorem.
 To demonstrate clearly the strength of our approach, in
Section~\ref{sec:Henon} we prove that for the rotating H\'enon map
considered in \cite{HL} for an explicit range of parameters there
exists an invariant manifold.

%TCIDATA{OutputFilter=latex2.dll}
%TCIDATA{Version=5.00.0.2606}
%TCIDATA{LaTeXparent=0,0,online-edit.tex}
\section{Preliminaries} \label{sec:prel}
\subsection{Notation}

By $\Id:X \to X$ we will denote the identity map.

Let  $\mathbb{R}^n$ be equipped with some norm $\|\cdot \|$. For
$x_0 \in \mathbb{R}^n$ and $r>0$ we define $B_n(x_0,r)=\{ x \in
\mathbb{R}^n \ | \  \|x-x_0 \| < r  \}$. Since most of the time we
will be using unit balls centered at $0$, we set $B_n=B_n(0,1)$.
When considering a linear map on $A:\mathbb{R}^n
\mathbb{R}^k$ the symbol $\|A\|$ will always denote the standard
operator norm of $A$, i.e. $\|A\|=\sup_{\|x\|=1}\|Ax\|$. We will
also use $\|A\|_m=\inf_{\|x\|=1} \|Ax\|$.

Let $\Pi X=X_1\times X_2 \times \dots \times X_n$. For
$i=1,\dots,n$ we define projection $\pi_i:\Pi X \to X_i$ by
$\pi_i(x_1,\dots,x_n)=x_i$. Sometimes when the variables in the
cartesian product have names, for example $(x,y,z)$, we will use
names of variables as indices for projections, hence
$\pi_y(x,y,z)=y$.

Let $\mathcal{U}=\{U_i\}_{i  \in I}$ and $\mathcal{V}=\{V_j\}_{j
\in J}$ be coverings of a set $X$ (i.e. $X=\bigcup
\mathcal{U}=\bigcup \mathcal{V}$), we say that covering
$\mathcal{V}$ is inscribed in $\mathcal{U}$, denoted by
$\mathcal{V} \prec \mathcal{U}$, iff for any $j \in J$ there
exists $i \in I$, such that $V_j \subset U_i$. If $X$ is a
topological space, then we say that covering $\mathcal{U}$ is open
iff it consists from open sets.

Let $U$ be a topological space. We say that $U$ is contractible,
when there exists a deformation retraction of $U$ onto single
point space, i.e.  there exists continuous map $h:[0,1] \to U$ and
$x_0 \in U$, such that $h(0,q)=q$  and $h(1,q)=x_0$ for all for $q
\in U$.

For $x,y \in \mathbb{R}^n$ we set $[x,y]=\{ z=tx+(1-t)y, \ t \in
[0,1] \}$.

%TCIDATA{OutputFilter=latex2.dll}
%TCIDATA{Version=5.00.0.2606}
%TCIDATA{LaTeXparent=0,0,online-edit.tex}

\subsection{Vector bundles}

We start with recalling the definition of the vector bundle
\cite{Hi}.
\begin{definition}
Let $B,E$ be topological spaces. Let $p:E \to B$  be a continuous
map. A \emph{vector bundle chart } on $(p,E,B)$ with \emph{domain
} $U$ and \emph{dimension} $n$ is a homeomorphism
$\varphi:p^{-1}(U) \to U \times \mathbb{R}^n$, where $U \subset B$
is open and such that
\begin{equation}
  \pi_1 \circ \varphi (z)= p(z), \qquad \mbox{for $z \in
  p^{-1}(U)$}.
\end{equation}
We will denote such bundle chart by a pair  $(\varphi,U)$.

For each $x\in U$ we define the homeomorphism $\varphi_x$ to be
the composition
\begin{equation}
  \varphi_x:p^{-1}(x) \stackrel{\varphi}{\rightarrow} \{x\} \times
  \mathbb{R}^n \to \mathbb{R}^n .
\end{equation}
 A \emph{vector bundle atlas $\Phi$} on $(p,E,B)$ is a family of
vector bundle charts on $(p,E,B)$ with the values in the same
$\mathbb{R}^n$, whose domains cover $B$ and such that whenever
$(\varphi,U)$  and $(\psi,V)$ are in $\Phi$ and $x \in U \cap V$,
the homeomorphism $\psi_x \varphi_x^{-1}: \mathbb{R}^n \to
\mathbb{R}^n$ is linear. The map
\begin{equation}
  U \cap V \ni x \mapsto \psi_x \varphi_x^{-1} \in GL(n)
\end{equation}
is continuous for all pairs of charts in $\Phi$.

A maximal vector bundle atlas $\Phi$ is a \emph{vector bundle
structure} on $(p,E,B)$.

We then call $\xi=(p,E,B,\Phi)$ a \emph{vector bundle} having
\emph{(fibre) dimension $n$, projection $p$, total space $E$} and
\emph{base space $B$}. Often $\Phi$ will not be explicitly
mentioned. In fact we may denote $\xi$ by $E$. Sometimes it is
convenient to put $E=E\xi$, $B=B\xi$, etc.

The \emph{fibre} over $x \in B$ is the space
$p^{-1}(x)=\xi_x=E_x$. $\xi_x$ has the vector space structure.

If the $E,B$ are $C^r$ manifolds and all maps appearing in the
above definition are $C^r$, then we will say that the bundle
$(p,E,B,\Phi)$ is a $C^r$-bundle.

\end{definition}

One can  introduce the notion of subbundles, morphisms etc (see
\cite{Hi} and references given there). The fibers can have a
structure: for example a scalar product, a norm, which depend
continuously on the base point.

\begin{definition}
\label{def:Banachbundle} We say that the vector bundle $\xi$ is a
Banach vector bundle with fiber being the Banach space
$(F,\|\cdot\|)$, if for each $x \in B\xi$ the fiber $\xi_x$ is a
Banach space with norm $\| \cdot\|_x$ such that for each bundle
chart $(\varphi,U)$ the map $\varphi_x : E_x \to F $ is an
isometry ($\|\varphi(v)\| = \|v\|_x$).
\end{definition}

For vector bundles $\xi_1$, $\xi_2$ over the same base space one
can define $\xi=\xi_1 \oplus \xi_2$ by setting  $\xi_x = \xi_{1,x}
\oplus \xi_{2,x}$. In the following, points in $\xi_1 \oplus \xi_2$
will be denoted by a triple $(z,x_1,x_2)$, where $z \in
B\xi_{1}=B\xi_{2}$, $x_1 \in \xi_{1,z}$ and $x_2 \in \xi_{2,z}$.
 If $\xi_1$ and $\xi_2$ are both Banach bundles,
then $\xi_i \oplus \xi_2$ is also a Banach vector bundle with the
norm on   $\xi_{1,z}\oplus \xi_{2,z}$   defined  by
$\|(x_1,x_2)\|_z=\max (\|x_1\|_z, \|x_2\|_z)$. We will always use
this norm on $\xi_1 \oplus \xi_2$. We will also always assume that
the atlas on bundle $\xi_1 \oplus \xi_2$ respects this structure,
namely if $(\eta,U)$ is a bundle chart for $\xi_1 \oplus \xi_2$,
then its restriction (obtained through projection) to $\xi_i$ is
also a bundle chart for $\xi_i$ for $i=1,2$.

\section{Central hyperbolic sets, covering relations and cones} \label{sec:ch-sets}
In this section we introduce the setup for our approach. First we introduce the concept of a central
hyperbolic set (ch-set). This set will play the role of a compact
region in which we expect to find an  invariant
manifold. We will consider maps defined on ch-sets which have
certain contraction and expansion properties. On a ch-set we will have three directions:
  the direction of topological expansion, the direction of topological contraction, and a third,
  the central direction,  in which the dynamics will be weaker than in the first two directions.
  The contraction and expansion properties will be expressed in terms of covering relations.
  The ch-sets will be equipped with cones. The cones will help us to narrow down and specify
  in more detail the directions of expansion and contraction.

\subsection{Central hyperbolic sets}
\label{sec:chs-cvol}
Before we introduce the definition of a central hyperbolic set we will need the following definition.
\begin{definition}
 Let $\xi_u$ and $\xi_s$ be  Banach vector bundles, with
the base space $\Lambda$. Let $Z \subset \Lambda$. For $r>0$ we
define set $N(Z,r), N^\pm(Z,r) \subset \xi_u \oplus \xi_s$ by
\begin{eqnarray*}
  N(Z,r)=\{ (z,z_u,z_s) \in \xi_u \oplus \xi_s \ | \  z \in Z, \quad  \| z_u \| \leq r, \quad \| z_s \| \leq r
  \}, \\
  N^+(Z,r)=\{ (z,z_u,z_s) \in \xi_u \oplus \xi_s \ | \  z \in Z, \quad  \| z_u \| \leq r, \quad \| z_s \| = r
  \}, \\
 N^-(Z,r)=\{ (z,z_u,z_s) \in \xi_u \oplus \xi_s \ | \  z \in Z, \quad  \| z_u \| = r, \quad \| z_s \| \leq r
  \}
\end{eqnarray*}
If $r=1$, then we will write $N(Z), N^\pm(Z)$ instead of $N(Z,1),
N^\pm(Z,1) $ respectively.

\end{definition}

\begin{definition}
\label{def:cen-hyp} Assume that, we have  Banach vector bundles
$\xi_u$, $\xi_s$,  $\xi=\xi_u \oplus \xi_s$. Let $u$ and $s$ be
the fiber dimension of $\xi_u$ and $\xi_s$, respectively. Let  the
base space for $\xi$, denoted by  $\Lambda$, be a compact manifold
without boundary of dimension $c$.

 A \emph{central-hyperbolic set} $D$ (a ch-set),  is an object
consisting of the following data

\begin{enumerate}
\item $|D|$ a compact subset of $E\xi$.

\item a homeomorphism $\phi: \xi_u \oplus \xi_s \to \xi_u \oplus \xi_s$ such that
\begin{eqnarray}
  p_\xi (\phi(z)) &= & p_\xi(z) \quad \mbox{for $z \in E\xi$}, \\
 \phi(|D|) &= & N(\Lambda).
\end{eqnarray}

\end{enumerate}
\end{definition}

We will usually drop vertical  bars in the symbol $|D|$ and write
$D$ instead.

The notation $u,s$ and $c$ for the dimensions in Definition \ref{def:cen-hyp}, stand for the unstable,
 stable and central directions respectively. Their roles will become apparent once we introduce
 the definitions of covering relations and cone conditions for maps.

 For a central-hyperbolic set $D$
we define
\begin{eqnarray*}
D_{\phi} &=&  \phi(|D|)=N(\Lambda), \\
D_{\phi}^{-} &=& \{ (x,x_u,x_s) \in D_\phi, x \in \Lambda, x_u
\in\xi_{u,x},  x_s  \in \xi_{s,x} \  | \ \| x_u \|=1
\}, \\
D_{\phi}^{+} &=& \{ (x,x_u,x_s) \in D_\phi, x \in \Lambda, x_u \in
\xi_{u,x},  x_s  \in \xi_{s,x} \  | \ \| x_s \|=1 \}, \\
D^{-} &=& \phi^{-1}(D_{\phi}^{-}),\quad
D^{+}=\phi^{-1}(D_{\phi}^{+}).
\end{eqnarray*}

The sets $D^+$ and $D^-$ will later on be the entrance and exit
sets for a map defined on $D$, respectively.

From now on we assume that we work in the Banach vector bundle
$\xi_u \oplus \xi_s$ with the base space given by a compact
manifold without boundary $\Lambda$. We will now define an atlas on $\Lambda$,
 which will later be used for the definitions of covering relation and cone conditions.

%\begin{definition}
%Let $I$ be a finite set and  assume that we have an atlas
%$\{(\eta_i,U_i)\}_{i\in I}$ for our manifold $\Lambda$,
%\begin{equation}
% \eta_{i}:  U_i \to \eta_i(U_i) \subset \mathbb{R}^c, \quad \mbox{where $U_i \subset \Lambda$ is
%open. }  \label{eq:atlas}
%\end{equation}
%Moreover, we assume that for all $i \in I$ there exists
%$(\varphi,W) \in \Phi\xi$, such that $U_i \subset W$. Therefore on
%$p^{-1}(U_i)$ we have a chart map
%\begin{equation}
%\tilde{\eta}_i: p^{-1}(U_i) \ni z \mapsto (\eta_i(p(z)), \pi_2
%\varphi(z)) \in \eta_i(U_i) \times \left(\mathbb{R}^u \oplus
%\mathbb{R}^s\right).
%\end{equation}
% We will call it a  \emph{good atlas}.
%\end{definition}

\begin{definition}
For our vector bundle we define the \emph{ good atlas } as
follows:

Assume that we have an atlas $\{(\eta_i,U_i)\}_{i\in I}$  for
$\Lambda$, where $I$ is a finite set  and
\begin{equation}
 \eta_{i}:  U_i \to \eta_i(U_i) \subset \mathbb{R}^c, \quad \mbox{where $U_i \subset \Lambda$ is
open. }  \label{eq:atlas}
\end{equation}
Moreover, we assume that for all $i \in I$ there exists
$(\varphi,W) \in \Phi\xi$, such that $U_i \subset W$. We
fix one such $\varphi$ for each $i$. Therefore on $p^{-1}(U_i)$
we have a chart map
\begin{equation}
\tilde{\eta}_i: p^{-1}(U_i) \ni z \mapsto (\eta_i(p(z)), \pi_2
\varphi(z)) \in \eta_i(U_i) \times \left(\mathbb{R}^u \oplus
\mathbb{R}^s\right).
\end{equation}
\end{definition}

Therefore the atlas is good if the domains of the chart maps on
$\Lambda$ are also domains for some bundle chart maps.
 From now we will always implicitly assume that we work with a good atlas.

\begin{definition}
Let $Z \subset U_i$, where $(\eta_i,U_i)$ is a chart from some
good atlas. For $r >0$ we define $N_{\eta_i}(Z,r)$, the normal
neighborhood,  and $N^\pm_{\eta_i}(Z,r)$ normal exit and entry
sets,  as follows
\begin{eqnarray}
  N_{\eta_i}(Z,r)&=&\eta_i(Z) \times \overline{B}_u(0,r) \times
  \overline{B}_s(0,r) \subset \mathbb{R}^c \times \mathbb{R}^u \times \mathbb{R}^s,  \\
  N_{\eta_i}^-(Z,r)&=&\eta_i(Z) \times \partial B_u(0,r) \times
  \overline{B}_s(0,r) \subset   N_{\eta_i}(Z,r),
   \\
  N_{\eta_i}^+(Z,r)&=&\eta_i(Z) \times \overline{B}_u(0,r) \times
   \partial B_s(0,r) \subset   N_{\eta_i}(Z,r).
\end{eqnarray}
If $r=1$ then we will often drop it and write $N_{\eta_i}^\pm(Z)$
instead.
\end{definition}
Obviously, for $Z \subset U_i$ we have
\begin{displaymath}
  N(Z,r)=\tilde{\eta}_i^{-1}\left(N_{\eta_i}(Z,r)\right),\quad
  N^\pm(Z,r)=\tilde{\eta}_i^{-1}\left(N^\pm_{\eta_i}(Z,r)\right) \quad .
\end{displaymath}

Let $D$ be ch-set and let
\[
f:|D|\rightarrow \xi_u \oplus \xi_s
\]
be a continuous map. We  define
\[
f_{\phi}:=\phi\circ f\circ\phi^{-1}: N(\Lambda) \rightarrow \xi_u
\oplus \xi_s.
\]
When using local coordinates around $z$ and $f(z)$, given by
charts  $(\eta_j,U_j)$ and $(\eta_i,U_i)$ respectively, we will
consider functions $f_{ij}$ given by
\begin{equation}
f_{ij}:=\tilde{\eta}_{i}\circ f_{\phi}\circ \tilde{\eta}_{j}^{-1}.
\label{eq:f_x}
\end{equation}

The following commutative diagram illustrates the meaning and mutual
relations between maps $f$, $f_{\phi}$ and $f_{ij}$.

\[
\begin{array}{ccc}
D & \overset{f}{\longrightarrow} & \xi_u \oplus \xi_s \\
\downarrow\phi &  & \downarrow\phi \\
N(\Lambda) & \overset{f_{\phi}}{
\longrightarrow} & \xi_u \oplus \xi_s \\
\uparrow \tilde{\eta}_{j}^{-1} &  & \quad\quad\uparrow
\tilde{\eta}_{i}^{-1} \\
\eta_j( U_j)\times\overline{B}_{u}\times\overline{B}_{s} &
\overset{f_{ij}}{\longrightarrow} & \eta_i(U_i)\times\mathbb{R}^{u+s}%
\end{array}
\]

\subsection{Covering relations for ch-sets}
In this section we give the definition of a covering relation for a ch-set. Covering relations will be used
later on in the proof of our main result to ensure that we have an invariant set in the interior of a ch-set.
\begin{figure}
[ptb]
\begin{center}
\includegraphics[height=6cm]{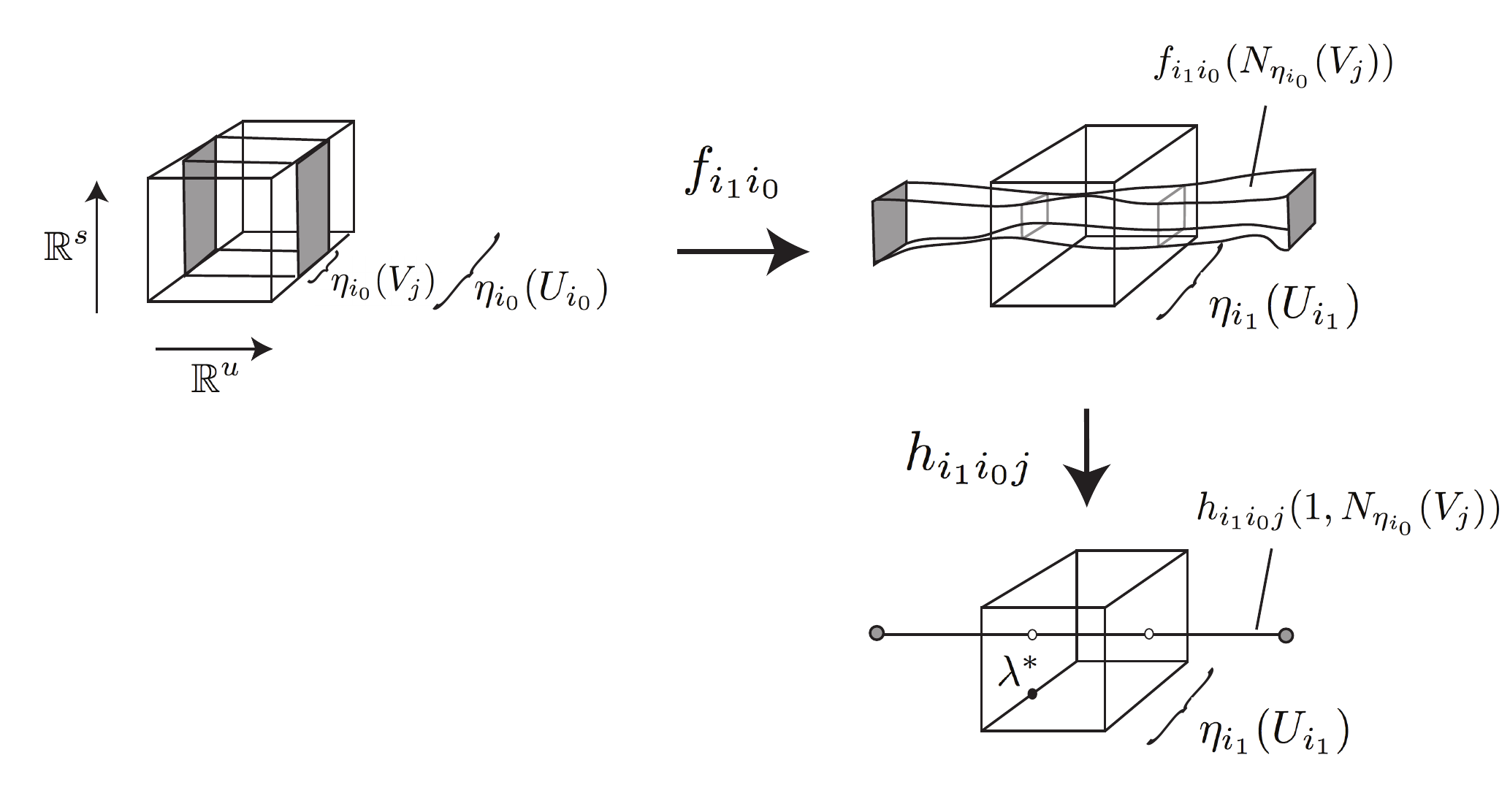}
\caption{ Covering relation for a ch-set. }%
\label{fig:cover}%
\end{center}
\end{figure}

\begin{definition}
\label{def:f-covers} Assume that we have two Banach vector bundles
$\xi_u$ and $\xi_s$ with the same base space $\Lambda$, a compact
manifold without boundary of dimension $c$.

 Let $D \subset \xi_u \oplus \xi_s$ be a
ch-set and a map $f:D \to \xi_u \oplus \xi_s$ be continuous.

Assume that  $\{(\eta_i,U_i)\}_{i \in I}$ is a good atlas  on
$\Lambda$, where $I$ is some finite set, and $\{V_j\}_{j \in J}$
($J$ a finite set) is an open covering of $\Lambda$, such that for
every $j \in J$ there exists $i_0, i_1 \in I$ (not necessarily
unique) such that
\begin{eqnarray}
   V_j &\subset& U_{i_0}  \label{eq:VinU} \\
   f(p^{-1}(V_j) \cap |D|) &\subset& p^{-1}(U_{i_1}).
    \label{eq:fgoodatals}
\end{eqnarray}

We will say that  ch-set $D$ \emph{$f$-covers itself}, which we
will denote by
\[
D\overset{f}{\Longrightarrow}D,
\]
if the following conditions are satisfied for every $V_j$.

Let  $U_{i_0}$ and $U_{i_1}$ by any sets  such that conditions
(\ref{eq:VinU}) and (\ref{eq:fgoodatals}) hold. The conditions
are:
\begin{enumerate}
\item There exists a continuous homotopy
\begin{displaymath}
h_{i_1i_0j}:[0,1]\times N_{\eta_{i_0}}(V_j) \rightarrow
\eta_{i_1}(U_{i_1}) \times \mathbb{R}^{u}\times\mathbb{R}^{s}
\end{displaymath}
 such
that the following conditions hold true
\begin{eqnarray}
h_{i_1i_0j}(0,z)  = f_{i_1 i_0}(z) &  & \mbox{for $z \in N_{\eta_{i_0}}(V_j)$,}\\
h_{i_1i_0j}\left([0,1], N_{\eta_{i0}}^-(V_j)\right) \cap
N_{\eta_{i_1}}(U_{i_1}) & =&\emptyset, \label{eq:f-homotopy-cond}
\\
h_{i_1i_0j}\left([0,1],N_{\eta_{i_0}}(V_{j})\right)\cap
N^+_{\eta_{i_1}}(U_{i_1}) & =&\emptyset.
\end{eqnarray}

\item If $u>0,$ then there exists a linear map $A_{i_1i_0j}:\mathbb{R}^{u}\rightarrow
\mathbb{R}^{u}$ and $\lambda^* \in \eta_{i_1}(U_{i_1})$ such that
\begin{align}
h_{i_1i_0j}(1,(\lambda,x,y)) & =(\lambda^*,A_{i_1i_0j}x,0),  \nonumber \\
A_{i_1i_0j}(\partial B_{u}(0,1)) & \subset\mathbb{R}^{u}\backslash\overline{B_{u}}%
(0,1).   \label{eq:f-homotopy-cond2}
\end{align}

\item If $u=0$, then there exists $\lambda^* \in \eta_{i_1}(U_{i_1})$ such that
\[
h_{i_1i_0j}(1,(\lambda,y))=(\lambda^*,0).
\]
\end{enumerate}
\end{definition}

The idea behind the definition can be summarized as follows. In local coordinates the map
topologically expands the ch-set in the unstable direction and contracts it in the stable direction.
 In the central direction we do not require any contraction or expansion properties. We just require
 that the local maps are properly defined on sets $N_{\eta_{i_0}}(V_j)$ (see Figure \ref{fig:cover}).

\begin{remark}
In fact it is enough to require that for a given $j$  there exists
a required homotopy for just one pair of  $U_{i_0}$ and $U_{i_1}$
satisfying  conditions (\ref{eq:VinU}) and (\ref{eq:fgoodatals}).
For the other pairs the homotopy can be obtained using the
transition maps from the atlas.
\end{remark}

%\textbf{MC: tu nowa definicja i komentarz po niej.}
%
%\begin{definition}
%\label{def:f-back-covers}
% Let $D \subset \xi_u \oplus \xi_s$ be a
%ch-set and a map $f:D \to \xi_u \oplus \xi_s$ be continuous. We will say that  ch-set $D$
% \emph{$f$-back covers itself}, which we
%will denote by
%\[
%D\overset{f}{\Longleftarrow}D,
%\]
%if $D$ $f$-covers itself, but with the roles of the coordinates $u$ and $s$ reversed. \end{definition}

%Later on we will work with an invertible map $f:U \to U$ with $D\subset U \subset \xi_u \oplus \xi_s$.
%We will consider the case where $D$ $f$-covers itself and $f^{-1}$-back covers itself. Such setting
% is natural since the roles of the stable and unstable directions for the forward and backward maps are reversed.

%TCIDATA{OutputFilter=latex2.dll}
%TCIDATA{Version=5.00.0.2606}
%TCIDATA{LaTeXparent=0,0,online-edit.tex}
\subsection{Cones, horizontal and vertical disks}
\label{sec:conec}
In this section we introduce horizontal and vertical discs. These will be later on used as building blocks to construct both the invariant manifold and its stable and unstable manifolds in Section \ref{sec:main-th}.  Equipping the ch-set with cones will allow us to consider horizontal discs which are appropriately flat and vertical discs which are appropriately steep.

In this section we  assume that we have a fixed
Banach vector bundle $\xi_u \oplus \xi_s$ with a base space
$\Lambda$, which is a compact manifold without boundary of
dimension $c$.

\begin{definition}
Let $Q:\mathbb{R}^n \to \mathbb{R}$ be a quadratic
form. We define positive and negative
cone by
\begin{eqnarray*}
  C^+(Q) = \{x \in \mathbb{R}^n \ | \  Q(x) >0\}, \\
  C^-(Q) = \{x \in \mathbb{R}^n \ | \  Q(x) <0\}.
\end{eqnarray*}
If $z \in \mathbb{R}^n$ then we define positive and negative cones centered at $z$ by
\begin{eqnarray*}
  C^+(Q,z)= \{x \in \mathbb{R}^n \ | \  Q(z-x) >0\} = z + C^+(Q), \\
  C^-(Q,z)= \{x \in \mathbb{R}^n \ | \  Q(z-x) <0\} = z + C^-(Q).
\end{eqnarray*}
\end{definition}

In the sequel we will work with cones given by locally defined
quadratic forms on $\xi_u \oplus \xi_s$ and the sets of interests
will be the intersection of positive or negative cones centered at
$z$ with $N(\Lambda)$. Let $Q:\mathbb{R}^c \times \mathbb{R}^u
\times \mathbb{R}^s \to \mathbb{R}$ be a quadratic form.  Let $z
\in \mathbb{R}^c \times \mathbb{R}^u \times \mathbb{R}^s $, we set
\begin{eqnarray*}
 C^{\pm}_r(Q,z) = C^{\pm}(Q,z) \cap \{ (\theta,x,y) \in
  \mathbb{R}^c \times \mathbb{R}^u \times \mathbb{R}^s
          \  | \ \|x\| \leq r, \ \|y\| \leq r \}.
\end{eqnarray*}

\begin{definition}
\label{def:ch-cones} Let $D \subset \xi_u \oplus \xi_s$ be a
ch-set.

Let $\{(\eta_i,U_i)\}_{i \in I}$ be a good atlas on $\Lambda$ and
assume that for all $i \in I$ we have quadratic forms
 $Q_{i,h},Q_{i,v}:\mathbb{R}^{c}\times\mathbb{R}
^{u}\times\mathbb{R}^{s}\to \mathbb{R}$  given by
\begin{eqnarray}
Q_{i,h}(\theta,x,y) &=& \alpha_i(x)-\beta_i(y)-\gamma_i(\theta),  \label{eq:cone-h} \\
Q_{i,v}(\theta,x,y) &=& \alpha_i(x)-\beta_i(y)+\gamma_i(\theta),
\label{eq:cone-v}
\end{eqnarray}
where $\alpha_i:\mathbb{R}^{u}\to \mathbb{R}$,
$\beta_i:\mathbb{R}^{s}\to \mathbb{R}$ and
$\gamma_i:\mathbb{R}^{c}\to \mathbb{R}$ are positive definite
quadratic forms. We assume that these forms are uniformly bounded
in the following sense: there exist $M_+,M_- \in \mathbb{R}_+$,
such that for every $i \in I$ hold
\begin{eqnarray}
   M_- \|x\|^2 \leq \alpha_i(x) \leq M_+ \|x\|^2 \qquad \mbox{for $x \in
   \mathbb{R}^u$,} \notag \\
    M_- \|y\|^2 \leq \beta_i(y) \leq M_+ \|y\|^2 \qquad \mbox{for $y \in
   \mathbb{R}^s$,} \label{eq:M-est} \\
    M_- \|\theta\|^2 \leq \gamma_i(\theta) \leq M_+ \|\theta\|^2 \qquad \mbox{for $\theta \in
   \mathbb{R}^c$}. \notag
\end{eqnarray}

Assume that there exists an open covering $\{V_j\}_{j \in J}$ of
$\Lambda$ inscribed in $\{U_i\}_{i\in I}$, such that  $V_j$ is
contractible for each $j$ and the following two conditions are
satisfied

\begin{enumerate}

\item \label{cond:cones-chart}
 for any point $q \in |D|$ there exists $j \in J$ and $i\in I$ such that $
p(q) \in V_j$, $V_j \subset U_i$
and
\begin{eqnarray}
% \pi_1\left(  \overline{C^+_1(Q_{i,h},\tilde{\eta}_i(q))} \right) \subset \eta_i(V_j),
\overline{C^+_1(Q_{i,h},\tilde{\eta}_i(\phi(q)))} \subset
N_{\eta_i}(V_j),
\label{eq:atls-cond} \\
% \pi_1 \left(   \overline{C^-_1(Q_{i,v},\tilde{\eta}_i(q))} \right)   \subset \eta_i(V_j).
\overline{C^-_1(Q_{i,v},\tilde{\eta}_i(\phi(q)))} \subset
N_{\eta_i}(V_j).
 \label{eq:atls-cond2}
\end{eqnarray}

\item \label{cond:chart-pair}
for any $j \in J$ there exists an $i \in I$, such that for all $q
\in |D| \cap p^{-1}(V_j)$ there exists a $V_{j(q)}\subset U_i$
such that
\begin{eqnarray}
 %\pi_1\left( \overline{C^+_1(Q_{i,h},\tilde{\eta}_i(q))} \right) \subset \eta_i(U_i),
    \overline{C^+_1(Q_{i,h},\tilde{\eta}_i(\phi(q)))} \subset N_{\eta_i}(V_{j(q)}) \subset
    N_{\eta_i}(U_i),
\label{eq:atls-good-pair} \\
%  \pi_1 \left( \overline{C^-_1(Q_{i,v},\tilde{\eta}_i(q))} \right)  \subset \eta_i(U_i).
\overline{C^-_1(Q_{i,v},\tilde{\eta}_i(\phi(q)))} \subset N_{\eta_i}(V_{j(q)}) \subset
N_{\eta_i}(U_i). \label{eq:atls-good-pair2}
\end{eqnarray}
\end{enumerate}

 The structure consisting
of $(D,\{\eta_i,U_i,Q_{i,h},Q_{i,v}\}_{i \in I},\{V_j\}_{j \in
J})$ will be called a \emph{ch set with cones}. Usually we will
refer to this structure by $D$.
\end{definition}
\begin{remark}
In fact we may allow to have different $\gamma_i$ in $Q_{i,h}$ and
$Q_{i,h}$, but $\alpha_i$ and $\beta_i$ must coincide.
\end{remark}

\begin{definition}
\label{def:cone-charts} Assume that $D$ is a ch-set with cones.

Let $q \in |D|$. We will say that $(V_j,U_i)$ is a
\emph{cone enclosing pair for $q$}, when $V_j$ and $U_i$ are as in point \ref{cond:cones-chart}. of
Def.~\ref{def:ch-cones}.

We will say that a pair $(V_j,U_i)$ is a \emph{cones chart pair}
iff condition \ref{cond:chart-pair} in
Def.~\ref{def:ch-cones}.
is satisfied.

\end{definition}

\begin{example} \label{ex:sets} Let us choose some $k\in \mathbb{N}$ such that $k\ge 9$.
 Consider a vector bundle $(\mathbb{R}/k) \times
\mathbb{R}^u \times \mathbb{R}^s = S^1\times \mathbb{R}^u \times
\mathbb{R}^s$. Let $D=N(\mathbb{R}/k)$ and let $\alpha(x)=x^2$,
$\beta(y)=y^2$, $\gamma(\theta)=\theta^2$, $Q_h=\alpha - \beta -
\gamma$ and $Q_v=\alpha - \beta + \gamma$. If for
$i,j\in\{0,\ldots ,k\}$ we define the sets 
\begin{eqnarray}
V_j &=&j+(0,5) \textrm{ mod } k, \notag \\
U_i&=&i+(0,9) \textrm{ mod } k, \notag
\end{eqnarray}
then for any point $q$ for which $\pi_{\theta}(q)\in[i,i+1]$ the pair $(V_{i-2},U_{i-4})$
is both a cones chart pair, as well as a cone enclosing pair for $q$ (see Figure \ref{fig:1}).

\end{example}
%%%%%%%%%%%
\begin{figure}
[ptb]
\begin{center}
\includegraphics[height=6cm]{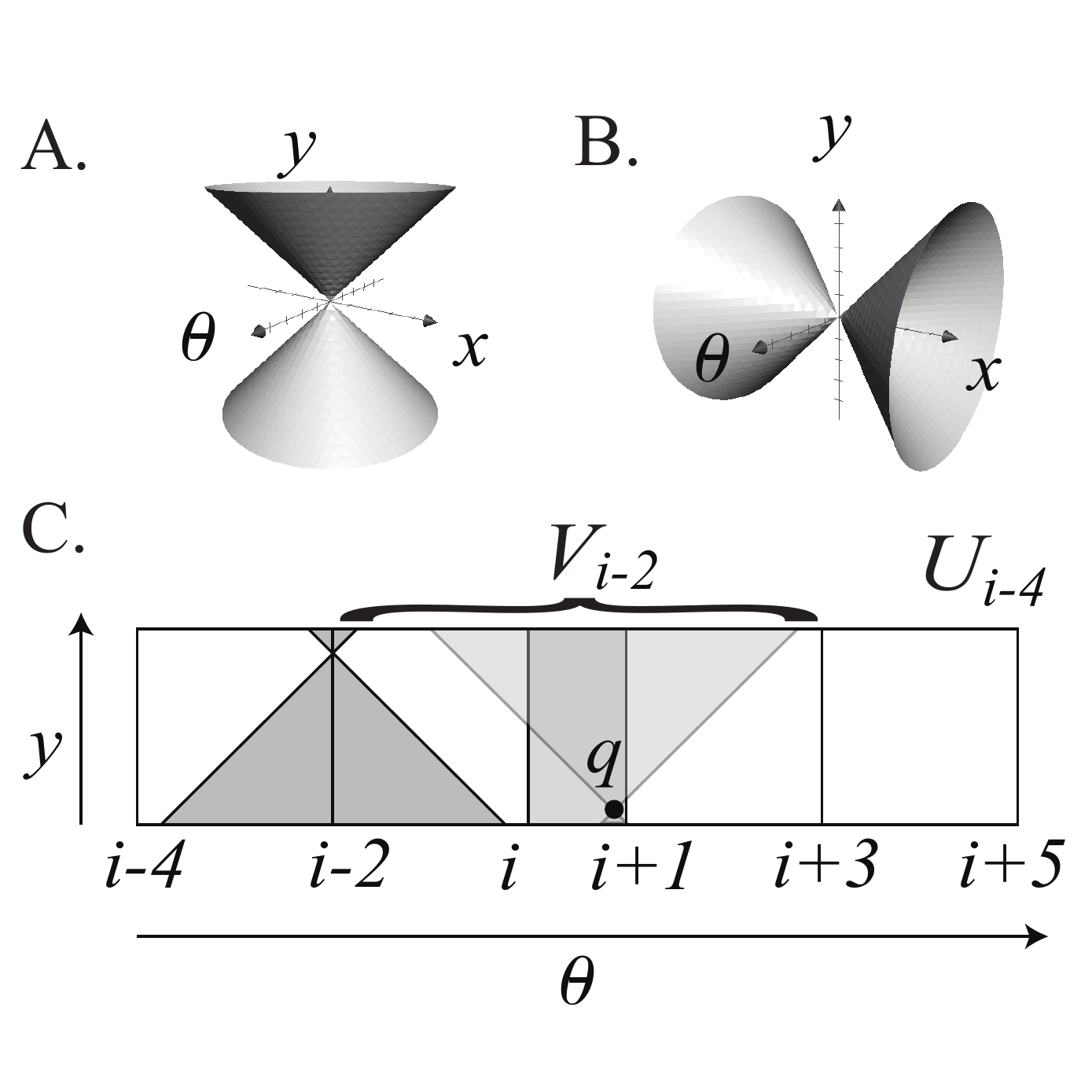}
\caption{A vertical cone (A) and a horizontal cone (B), together with a projection of the cones chart pair $(V_{i-2},U_{i-4})$ and two vertical cones (one centered at $q$ and the other at a point from $|D| \cap p^{-1}(V_{i-2})$) onto the $\theta, y$ coordinates   (C). }%
\label{fig:1}%
\end{center}
\end{figure}
%%%%%%%

We now move on to the definitions of horizontal and vertical discs.
\begin{definition}
\label{def:hor-disk} Let $D$ be a ch-set.
We say that $b:\overline{B}_{u}\rightarrow D$ is a \emph{ horizontal disc} in $D$ if there
exists $U_i$ and a homotopy
$h:[0,1]\times\overline{B}_{u}\rightarrow N_{\eta_i}(U_i)$ such
that
\begin{eqnarray}
h(0,x) &=& \tilde{\eta}_i(b_{\phi}(x)),  \quad x \in \overline{B}_u,\nonumber \\
h(1,x) &=& (\lambda^{\ast},x,0), \quad \mbox{for all
$x\in\overline{B}_u$ and some
  $\lambda^{\ast}\in \eta_i(U_i)$, }  \nonumber \\
h(t,x) &\in& N_{\eta_i}^-(U_i),\quad\mbox{for all $t\in [0,1]$ and
$x\in\partial\overline{B}_{u}$},
   \label{eq:horiz-in-boundary}
\end{eqnarray}
where $b_\phi=\phi \circ b$. Let us set $|b|=b(\overline{B}_u)$.

If additionally, $|b| \subset Z \subset U_i$ and $h([0,1]\times
\overline{B}_u) \subset N_{\eta_i}(Z)$, then we will say that $b$
is a horizontal disk for the pair $(Z,U_i)$.
\end{definition}

\begin{definition}
\label{def:hor-cones} Let $D$ be a ch-set with cones, $Z \subset
U_i$ and let $b:\overline{B}_{u}\rightarrow D$ be a horizontal
disk for $(Z,U_i)$. We say that $b$ is a \emph{ horizontal disc
satisfying the cone condition} for $(Z,U_i)$ if the following condition
holds
\begin{eqnarray}
 Q_{i,h}( \tilde{\eta}_i(b_\phi(x_0)) -
\tilde{\eta}_i(b_\phi(x_1)) ) >  0,
          \quad \mbox{for all $x_0,x_1 \in \overline{B}_u$}.
\label{eq:cone-cond-h}
\end{eqnarray}

If condition (\ref{eq:cone-cond-h}) holds for all $U_j$, such that
$|b| \subset U_j$, then we say that $b$ is a horizontal disk in
$D$.
\end{definition}

The idea behind Definition \ref{def:hor-cones} is that when we consider
a horizontal disc in local coordinates and attach a horizontal cone to any
of its points, then the entire disc will be contained in the cone (see Figure \ref{fig:horizontal-cc}).

%%%%%%%%%%%
\begin{figure}
[ptb]
\begin{center}
\includegraphics[height=4cm]{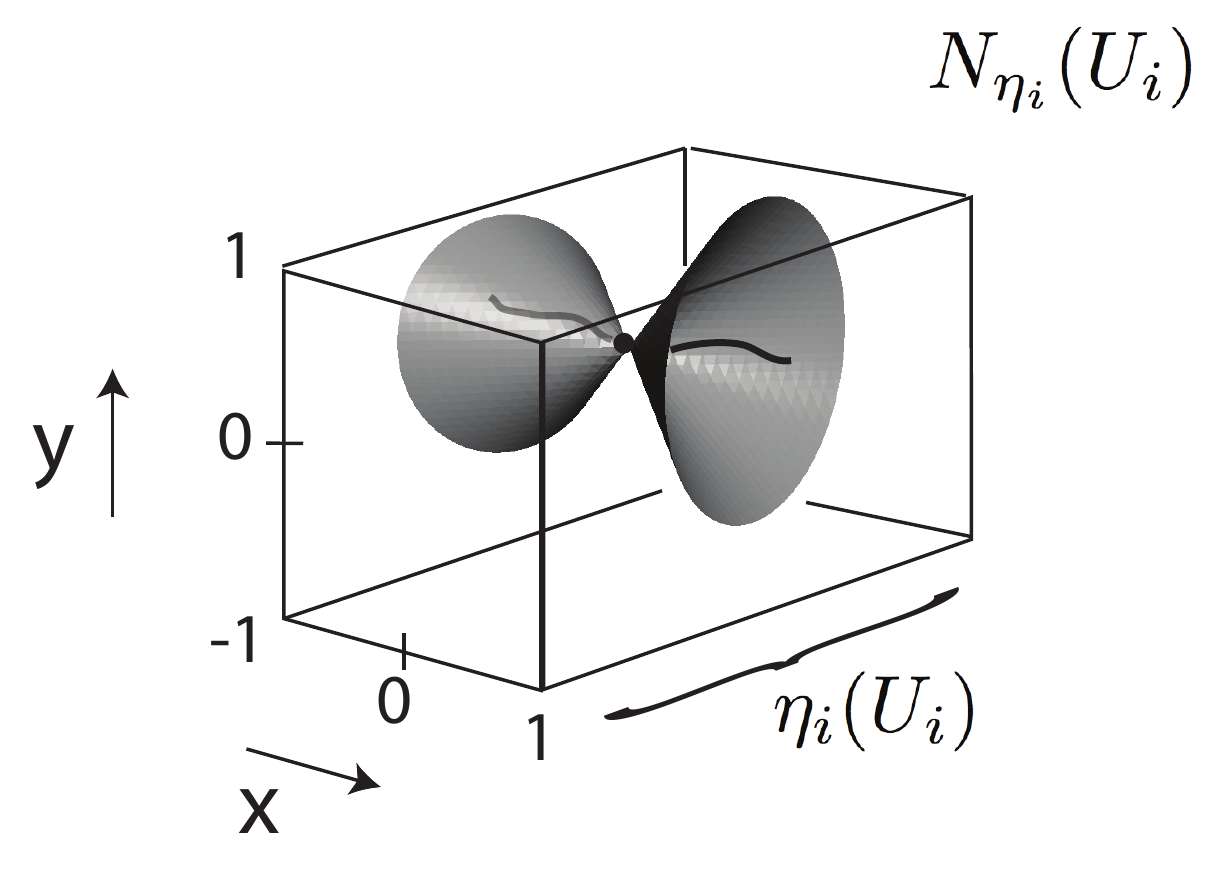}
\caption{A horizontal disc which satisfies cone conditions (here $u=s=1$).}%
\label{fig:horizontal-cc}%
\end{center}
\end{figure}
%%%%%%%

Analogously we define a vertical disk as follows.

\begin{definition}
\label{def:vert-disk} Let $D$ be a ch-set.
We say that $b:\overline{B}_{s}\rightarrow D$ is a \emph{ vertical disc} in $D$ if there
exists $U_i$ and a homotopy
$h:[0,1]\times\overline{B}_{s}\rightarrow N_{\eta_i}(U_i)$ such
that
\begin{eqnarray}
h(0,y) &=& \tilde{\eta}_i(b_{\phi}(y)),  \quad y \in \overline{B}_s, \nonumber \\
h(1,y) &=& (\lambda^{\ast},0,y), \quad \mbox{for all
$y\in\overline{B}_s$ and some
  $\lambda^{\ast}\in \eta_i(U_i),$ }  \nonumber \\
h(t,y) &\in& N_{\eta_i}^+(U_i),\quad\mbox{for all $t\in [0,1]$ and
$y\in\partial\overline{B}_{s}$},
   \label{eq:vert-in-boundary}
\end{eqnarray}
where $b_\phi=\phi \circ b$. Let us set $|b|=b(\overline{B}_s)$.

If additionally, $|b| \subset Z \subset U_i$ and $h([0,1]\times
\overline{B}_s) \subset N_{\eta_i}(Z)$, then we will say that $b$
is a vertical disk for $(Z,U_i)$.
\end{definition}

\begin{definition}
\label{def:vert-cones} Let $D$ be a ch-set with cones, $Z \subset
U_i$ and let $b:\overline{B}_{s}\rightarrow D$ be a vertical
disk for $(Z,U_i)$. We say that $b$ is a \emph{ vertical disc
satisfying the cone condition} for $(Z,U_i)$ if the following condition
holds
\begin{eqnarray}
 Q_{i,v}( \tilde{\eta}_i(b_\phi(y_0)) -
\tilde{\eta}_i(b_\phi(y_1)) ) < 0,
          \quad \mbox{for all $y_0,y_1 \in \overline{B}_s$}.
\label{eq:cone-cond-v}
\end{eqnarray}

If condition (\ref{eq:cone-cond-v}) holds for all $U_j$, such that
$|b| \subset U_j$, then we say that $b$ is a vertical disk in
$D$.
\end{definition}

The following lemma was proved  in a slightly different setting in
\cite[Lemma 5]{Z}
\begin{lemma}
\label{lem:hord-is-graph} Assume that $b:\overline{B}_u \to D$ is
a horizontal disk satisfying cone conditions in $U_i$.

Then $b$ can be represented as the graph of the function in the
following sense: there exists  continuous functions
$d_c:\overline{B}_u \to U_i$, $d_y:\overline{B}_u \to
\overline{B}_s$ such that for any $x_1 \in \overline{B}_u$ there
exists a uniquely defined $x \in \overline{B}_u$, such that
\begin{equation}
  \tilde{\eta}_i(b_\phi(x_1))=(d_c(x),x,d_y(x)).
\end{equation}
\end{lemma}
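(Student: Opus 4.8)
The plan is to reduce the statement to the assertion that, in the local coordinates $\tilde\eta_i$, the $\mathbb{R}^u$–component of the disk is a homeomorphism of $\overline{B}_u$ onto itself. Writing $\tilde\eta_i(b_\phi(x))=(b_c(x),b_x(x),b_y(x))\in\eta_i(U_i)\times\overline{B}_u\times\overline{B}_s$ (which is legitimate since $|b|\subset U_i$), with $b_c,b_x,b_y$ continuous, once we know that $b_x\colon\overline{B}_u\to\overline{B}_u$ is a homeomorphism the lemma follows at once: set $d_c:=\eta_i^{-1}\circ b_c\circ b_x^{-1}$ and $d_y:=b_y\circ b_x^{-1}$, which are continuous, and for $x_1\in\overline{B}_u$ take $x:=b_x(x_1)$. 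This $x$ is the only candidate solving $\tilde\eta_i(b_\phi(x_1))=(d_c(x),x,d_y(x))$, since the $\mathbb{R}^u$–component of the left-hand side already equals $b_x(x_1)$, and it is a solution by the choice of $d_c,d_y$ (identifying $U_i$ with $\eta_i(U_i)$ as in the statement).

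First I would extract injectivity of $b_x$ from the cone condition (\ref{eq:cone-cond-h}). For $x_0\neq x_1$ it yields $Q_{i,h}\big(\tilde\eta_i(b_\phi(x_0))-\tilde\eta_i(b_\phi(x_1))\big)>0$, i.e.\ by (\ref{eq:cone-h}), $\alpha_i\big(b_x(x_0)-b_x(x_1)\big)>\beta_i\big(b_y(x_0)-b_y(x_1)\big)+\gamma_i\big(b_c(x_0)-b_c(x_1)\big)\ge 0$; since $\alpha_i$ is positive definite this forces $b_x(x_0)\neq b_x(x_1)$. Hence $b_x$ is a continuous injection of the compact set $\overline{B}_u$ into $\overline{B}_u$.

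Next I would prove that $b_x$ is onto by a Brouwer–degree argument based on the homotopy $h$ of Definition~\ref{def:hor-disk}. Let $\pi_u\colon\mathbb{R}^c\times\mathbb{R}^u\times\mathbb{R}^s\to\mathbb{R}^u$ be the projection and put $H:=\pi_u\circ h\colon[0,1]\times\overline{B}_u\to\overline{B}_u$. Then $H(0,\cdot)=b_x$ and $H(1,\cdot)=\mathrm{id}$, while by (\ref{eq:horiz-in-boundary}) $h(t,x)\in N^-_{\eta_i}(U_i)=\eta_i(U_i)\times\partial B_u\times\overline{B}_s$ whenever $x\in\partial B_u$, so $H(t,\partial B_u)\subset\partial B_u$ for every $t$. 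Hence for each $p\in B_u$ the homotopy $H$ avoids $p$ on $[0,1]\times\partial B_u$, and by homotopy invariance together with the normalization of the degree $\deg(b_x,B_u,p)=\deg(\mathrm{id},B_u,p)=1\neq 0$, so $p\in b_x(B_u)$. Thus $B_u\subset b_x(\overline{B}_u)$; as $b_x(\overline{B}_u)$ is compact, hence closed, it contains $\overline{B}_u$ (the closure of $B_u$), while it is trivially contained in $\overline{B}_u$, so $b_x(\overline{B}_u)=\overline{B}_u$. A continuous bijection from a compact space onto a Hausdorff space is a homeomorphism, which is exactly what was needed.

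The substantive point is the surjectivity step, but it is routine once $H$ is set up; the rest is the cone–condition computation and a standard point-set fact. The thing to watch is the chart bookkeeping — checking that the $\mathbb{R}^u$–projections of $b$ and of $h$ really take values in $\overline{B}_u$, which is precisely what is built into $N_{\eta_i}(U_i)$ and $N^-_{\eta_i}(U_i)$ — together with the harmless identification of $U_i$ with $\eta_i(U_i)$ already implicit in the statement. For $u=0$ there is nothing to prove.
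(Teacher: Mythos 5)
Your proposal is correct and is essentially the standard argument for this lemma (the one in the reference \cite[Lemma 5]{Z} that the paper points to in lieu of a proof): injectivity of the $\mathbb{R}^u$-projection follows from the cone condition (\ref{eq:cone-cond-h}) because $\alpha_i$ is positive definite and $\beta_i,\gamma_i$ are nonnegative, and surjectivity follows from the homotopy in Definition \ref{def:hor-disk} via homotopy invariance of the Brouwer degree, the boundary condition (\ref{eq:horiz-in-boundary}) keeping the degree well defined. The chart bookkeeping and the compactness argument upgrading the continuous bijection to a homeomorphism are handled correctly, so there is nothing to add.
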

An analogous lemma is valid for vertical disks
satisfying cone conditions.

The lemma below shows that  for a graph of a function over "unstable
coordinate" with its range in $p^{-1}(V_j)$ for some $V_j$ is a
horizontal disk in $V_j$.
\begin{lemma}
\label{lem:graph-is_hord}  Assume that $V_j\subset \Lambda$ is
contractible and that we have  continuous functions
$d_c:\overline{B}_u \to V_j$, $d_y:\overline{B}_u \to
\overline{B}_s$. Let $d:\overline{B}_u \to N_{\eta_i}(V_j)$ be
given by $d(x)=(d_c(x),x,d_y(x))$.

Then there exists  homotopy
$h:[0,1]\times\overline{B}_{u}\rightarrow N_{\eta_i}(V_j)$ such
that
\begin{eqnarray}
h(0,x) &=& d(x),  \quad x \in \overline{B}_u, \nonumber \\
h(1,x) &=& (\lambda^{\ast},x,0), \quad \mbox{for all
$x\in\overline{B}_u$ and some
  $\lambda^{\ast}\in \eta_i(V_j),$ }  \nonumber \\
h(t,x) &\in& N_{\eta_i}^-(V_j),\quad\mbox{for all $t\in [0,1]$ and
$x\in\partial\overline{B}_{u}$},
   \label{eq:graph-h-in-bd}
\end{eqnarray}
which means that $\phi^{-1}\circ \tilde \eta_i^{-1} \circ d:
\overline{B}_u \to D $ is horizontal disc in $V_j$.
\end{lemma}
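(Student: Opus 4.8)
The plan is to write the homotopy $h$ down in closed form, using only two elementary facts: the convexity of $\overline{B}_s$ (to push the ``vertical'' component $d_y$ linearly to $0$) and the contractibility of $V_j$, transported to $\eta_i(V_j)$ via the homeomorphism $\eta_i|_{V_j}$ (to collapse the ``central'' component $d_c$ to a single point). The ``unstable'' component of $h$ will be kept equal to $x$ at every time; this is exactly what guarantees that points above $\partial\overline{B}_u$ never leave $N^-_{\eta_i}(V_j)$.

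First, since $V_j$ is contractible and $\eta_i|_{V_j}\colon V_j\to\eta_i(V_j)$ is a homeomorphism, the set $\eta_i(V_j)$ is contractible, so we may fix $\lambda^\ast\in\eta_i(V_j)$ together with a continuous map $G\colon[0,1]\times\eta_i(V_j)\to\eta_i(V_j)$ satisfying $G(0,\cdot)=\mathrm{Id}$ and $G(1,\cdot)\equiv\lambda^\ast$. Then define
\[
h(t,x)=\bigl(\,G(t,d_c(x)),\ x,\ (1-t)\,d_y(x)\,\bigr),\qquad (t,x)\in[0,1]\times\overline{B}_u .
\]
This $h$ is continuous as a composition of continuous maps, $h(0,x)=(d_c(x),x,d_y(x))=d(x)$, and $h(1,x)=(\lambda^\ast,x,0)$ with $\lambda^\ast\in\eta_i(V_j)$; these are the first two lines of (\ref{eq:graph-h-in-bd}).

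It remains to verify that $h$ takes values where required. For any $(t,x)$ the first coordinate of $h(t,x)$ is $G(t,d_c(x))\in\eta_i(V_j)$, the second is $x\in\overline{B}_u$, and the third is $(1-t)d_y(x)\in\overline{B}_s$ because $\overline{B}_s$ is convex and contains $0$; hence $h([0,1]\times\overline{B}_u)\subset\eta_i(V_j)\times\overline{B}_u\times\overline{B}_s=N_{\eta_i}(V_j)$. If in addition $\|x\|=1$, the second coordinate lies in $\partial B_u$, so $h(t,x)\in N^-_{\eta_i}(V_j)$ for every $t$, which is the last line of (\ref{eq:graph-h-in-bd}). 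Finally, put $b=\phi^{-1}\circ\tilde\eta_i^{-1}\circ d$. Since $\tilde\eta_i^{-1}\bigl(N_{\eta_i}(V_j)\bigr)=N(V_j)\subset N(\Lambda)=D_\phi$, we get $b(\overline{B}_u)\subset\phi^{-1}(D_\phi)=|D|$, and $b_\phi=\phi\circ b=\tilde\eta_i^{-1}\circ d$, so $\tilde\eta_i(b_\phi(x))=d(x)=h(0,x)$. Thus $h$ is precisely the homotopy demanded in Definition~\ref{def:hor-disk} for the pair $(V_j,U_i)$, i.e.\ $b$ is a horizontal disc in $V_j$. There is no real obstacle here; the only point requiring attention is the bookkeeping that $h$ stays inside $N_{\eta_i}(V_j)$ and inside $N^-_{\eta_i}(V_j)$ over $\partial\overline{B}_u$, and this reduces to the three observations that $G$ is $\eta_i(V_j)$-valued, that the segment from $0$ to $d_y(x)$ lies in the convex set $\overline{B}_s$, and that the unstable coordinate is frozen at $x$.
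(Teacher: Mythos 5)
Your construction is correct and essentially identical to the paper's: the paper also sets $h(t,x)=(g(t,d_c(x)),x,(1-t)d_y(x))$ with $g$ a contraction of $V_j$ onto a point, the only (harmless) difference being that you transport the contraction to $\eta_i(V_j)$ via the chart homeomorphism and spell out the range verifications. Nothing further is needed.
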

\begin{proof}
 From the
contractibility of $V_j$ it follows that there exists $\lambda^* \in V_j$
and a continuous homotopy $g:[0,1] \times V_j \to V_j$ such that
\begin{equation*}
  g(0,\lambda)=\lambda, \quad g(1,\lambda)=\lambda^*, \qquad \mbox{for all $\lambda \in V_j.$}
\end{equation*}
We define homotopy $h$ by setting
\begin{equation}
  h(t,x)=(g(t,d_c(x)),x,(1-t)d_y(x)).
\end{equation}
\end{proof}

An analogous lemma is valid for vertical disks.

%TCIDATA{OutputFilter=latex2.dll}
%TCIDATA{Version=5.00.0.2606}
%TCIDATA{LaTeXparent=0,0,online-edit.tex}

\section{Cone-conditions for maps and main theorems}
\label{sec:main-th}

Our main result is given in Theorem \ref{th:main}. There we will show that
 for a map which satisfies cone conditions
 (see Def.~\ref{def:f-cone-cond}) there exists an invariant manifold
 together with its stable and unstable manifolds inside of the ch-set. For the proof
 of Theorem \ref{th:main} we will first show that if a map satisfies cone conditions
 then an image of a horizontal disc which satisfies cone conditions is also a horizontal
 disc which satisfies cone conditions. This fact will be used to construct vertical discs
 of points whose forward iterations remain inside of the ch-set. Using these discs we will
  construct our invariant manifold together with its stable and unstable manifolds.

We start by defining cone conditions for a map.

\begin{definition}
\label{def:f-cone-cond}Assume that $D$ is a central-hyperbolic set
with cones. Let $f:D \to\xi_{u} \oplus\xi_{s}$ be continuous and
assume that $D\overset {f}{\Longrightarrow}D$. Moreover, we assume
that the inscribed coverings \textbf{$\{V_j\}_j\in J$} used
in covering relation $D\overset{f}{\Longrightarrow}D$ (Def.~\ref{def:f-covers}%
) and cones in $D$ (Def.~\ref{def:ch-cones}) coincide.

We say that $f$ \emph{satisfies (forward) cone conditions on $D$} if there
exists an $m>1$ such that:

For any $x\in D$ and any $(V_{j_{0}}, U_{i_{0}})$ which is a cone enclosing
pair for $x$, if $f(x)\in D$ then there exist $i_{1}\in I$, $j_{1}\in J$ such
that $(V_{j_{1}}, U_{i_{1}})$ is a cone enclosing pair for $f(x)$, and
$f(p^{-1}(V_{j_{0}})\cap| D| ) \subset p^{-1}(U_{i_{1}})$; what is more, for
any $x_{1},x_{2}\in N_{\eta_{i_{0}}} (V_{j_{0}})$ such that
\begin{equation}
Q_{i_{0},h}(x_{1}-x_{2}) \geq 0  \label{eq:ccQwej}
\end{equation}
we have
\begin{equation}
Q_{i_{1},h}(f_{i_{1} i_{0}}(x_{1})-f_{i_{1} i_{0}}(x_{2})) > m Q_{i_{0}%
,h}(x_{1}-x_{2}). \label{eq:cone-cond-Q}%
\end{equation}

\end{definition}

We will now define backward cone conditions
for the inverse map. In order to do so we will need the following
notations.
Let $Q_{i,h},Q_{i,v}:\mathbb{R}^{c}\times\mathbb{R}^{u}\times\mathbb{R}%
^{s}\rightarrow\mathbb{R}$ be defined as in (\ref{eq:cone-h}) and
(\ref{eq:cone-v}). We define $Q_{i,h}^{-},Q_{i,v}^{-}:\mathbb{R}^{c}%
\times\mathbb{R}^{u}\times\mathbb{R}^{s}\rightarrow\mathbb{R}$ as
\begin{align}
Q_{i,h}^{inv}(\theta,x,y)  &  :=-\alpha_{i}(x)+\beta_{i}(y)-\gamma_{i}%
(\theta),\label{eq:cone-h-inv}\\
Q_{i,v}^{inv}(\theta,x,y)  &
:=-\alpha_{i}(x)+\beta_{i}(y)+\gamma_{i}(\theta).
\label{eq:cone-v-inv}%
\end{align}

\begin{definition}
Assume that $(D,\{\eta_{i},U_{i},Q_{i,h},Q_{i,v}\}_{i\in
I},\{V_{j}\}_{j\in J})$ is a ch-set with cones.  Let
$f:\xi_{u}\oplus\xi_{s} \supset \dom(f) \rightarrow\xi
_{u}\oplus\xi_{s}$ be a homeomorphism, such that $D \subset \dom(f^{-1})$. Assume that $D\overset{f^{-1}%
}{\Longrightarrow}D,$ with reversed roles of the coordinates $u$ and $s$. We
say that $f$ \emph{satisfies backward cone conditions }if $f^{-1}$ satisfies
forward cone conditions on $(D,\{\eta_{i},U_{i},Q_{i,h}^{-inv},$ $Q_{i,v}%
^{inv}\}_{i\in I},$$\{V_{j}\}_{j\in J})$, with reversed roles of
the coordinates $u$ and $s$.
\end{definition}

We will now show that an intersection of the ch-set with an image of a horizontal disc which
 satisfies cone conditions is a horizontal disc which satisfies cone conditions.

\begin{lemma}
\label{lem:im-of-hor} Assume that
\[
D\overset{f}{\Longrightarrow}D,
\]
and that $f$ satisfies (forward) cone conditions. Let $b:\overline{B}%
_{u}\rightarrow D$. Assume that for any $x\in B^{u}$ we have a $(V_{j(x)}%
,U_{i(x)})$ which is a cone enclosing pair for $b(x)$, and that $b$ is a
horizontal disc which satisfies cone condition for $(V_{j(x)},U_{i(x)})$.
Then
\[
\beta=\{q\in D|q=f(b(x))\text{ \textit{for some }}x\in B^{u}\}
\]
is nonempty and for any $q\in\beta$ there exists a $(V_{j(q)},U_{i(q)})$ which
is a cone enclosing pair for $q$, such that $\beta$ is a horizontal disc which
satisfies cone condition for $(V_{j(q)},U_{i(q)})$.
\end{lemma}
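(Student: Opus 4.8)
The plan is to transport the horizontal disc $b$ through $f$ in local coordinates and verify the three defining properties of a horizontal disc satisfying cone conditions: (a) $\beta$ is nonempty and is the image of a single $\overline{B}_u$ under a continuous map; (b) $\beta$ is a graph over the unstable coordinate, so the homotopy required in Definition~\ref{def:hor-disk} exists; (c) the cone condition \eqref{eq:cone-cond-h} holds. First I would fix a cone enclosing pair $(V_{j_0},U_{i_0})$ for some reference point; by compactness of $\overline{B}_u$ and the definition of cone conditions for $f$, I would select a single cone enclosing pair $(V_{j_1},U_{i_1})$ for the images, using the fact (from Def.~\ref{def:f-cone-cond}) that $f(p^{-1}(V_{j_0})\cap|D|)\subset p^{-1}(U_{i_1})$ so that the composition $f_{i_1 i_0}=\tilde\eta_{i_1}\circ f_\phi\circ\tilde\eta_{i_0}^{-1}$ is well defined on $N_{\eta_{i_0}}(V_{j_0})$. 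The candidate parametrization is then $x\mapsto f_{i_1 i_0}(\tilde\eta_{i_0}(b_\phi(x)))$. A technical point handled first: $\overline{B}_u$ may not be covered by a single enclosing pair, so I would use a partition/refinement argument — cover $\overline{B}_u$ by finitely many preimages of the $V_{j(x)}$, use the transition maps from the good atlas to express everything in one chart $U_{i_1}$, and invoke the Remark after Def.~\ref{def:f-covers} to reduce to one pair.

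\textbf{Cone condition and graph property.}
For step (c), take $x_0,x_1\in\overline{B}_u$. Since $b$ satisfies the cone condition for $(V_{j(x)},U_{i(x)})$, Definition~\ref{def:hor-cones} gives $Q_{i_0,h}(\tilde\eta_{i_0}(b_\phi(x_0))-\tilde\eta_{i_0}(b_\phi(x_1)))>0$, in particular $\geq 0$, so \eqref{eq:ccQwej} is satisfied by the pair $\tilde\eta_{i_0}(b_\phi(x_0)),\tilde\eta_{i_0}(b_\phi(x_1))$. Then \eqref{eq:cone-cond-Q} yields
\begin{equation*}
Q_{i_1,h}\bigl(f_{i_1 i_0}(\tilde\eta_{i_0}(b_\phi(x_0)))-f_{i_1 i_0}(\tilde\eta_{i_0}(b_\phi(x_1)))\bigr)>m\,Q_{i_0,h}(\cdots)>0,
\end{equation*}
which is exactly the cone condition \eqref{eq:cone-cond-h} for the candidate disc. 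This forces, via Lemma~\ref{lem:hord-is-graph}, that the image is the graph of continuous functions $d_c:\overline{B}_u\to U_{i_1}$, $d_y:\overline{B}_u\to\overline{B}_s$ over the unstable coordinate, provided I first check that the image lands inside $N_{\eta_{i_1}}(V_{j_1})$ (using that $(V_{j_1},U_{i_1})$ is a cone enclosing pair for each $q\in\beta$ and the containments \eqref{eq:atls-cond}). With the graph representation in hand and $V_{j_1}$ contractible, Lemma~\ref{lem:graph-is_hord} supplies the homotopy required by Definition~\ref{def:hor-disk}, so $\beta$ is a horizontal disc for $(V_{j(q)},U_{i(q)})$; combined with the cone estimate just derived it satisfies the cone condition.

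\textbf{Main obstacle.}
The routine parts are the cone estimate (a one-line application of \eqref{eq:cone-cond-Q}) and the conversion graph $\leftrightarrow$ disc (Lemmas~\ref{lem:hord-is-graph} and \ref{lem:graph-is_hord}). The delicate part is the bookkeeping with charts: showing that $\beta$ can be consistently described in a single target chart $U_{i_1}$ even though different source points $b(x)$ may require different enclosing pairs $(V_{j(x)},U_{i(x)})$, and that the covering-relation data (the homotopy $h_{i_1 i_0 j}$ of Def.~\ref{def:f-covers}, used to know $f$ maps $N^-$ outside and misses $N^+$) is compatible with the cone-enclosing-pair data — this is where the assumption that the inscribed coverings $\{V_j\}$ used for the covering relation and for the cones coincide (Def.~\ref{def:f-cone-cond}) is essential. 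I would also need to confirm that $f(b(x))\in D$ for at least one $x$ (nonemptiness of $\beta$): this follows because $b(x)\in p^{-1}(V_{j(x)})\cap|D|$ and the covering relation $D\overset{f}{\Longrightarrow}D$ together with the homotopy properties \eqref{eq:f-homotopy-cond}–\eqref{eq:f-homotopy-cond2} guarantee, by a degree argument on the unstable fiber (the linear map $A_{i_1 i_0 j}$ expands $\partial B_u$ off $\overline{B}_u$), that $f_\phi$ maps the horizontal disc onto something crossing $N(\Lambda)$ — in fact the graph property shows the full image disc lies in $D$.
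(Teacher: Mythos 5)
Your overall route coincides with the paper's: transport the cone inequality through \eqref{eq:cone-cond-Q}, deduce injectivity and Lipschitz continuity of the image in the unstable coordinate, and recover the required homotopy from Lemmas~\ref{lem:hord-is-graph} and~\ref{lem:graph-is_hord}. The cone-condition step (c) is correct and is exactly the paper's argument.

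There is, however, a genuine gap in step (a)/(b): you never establish that the part of the image lying in $D$ projects \emph{onto} the whole of $\overline{B}_{u}$ in the unstable coordinate. A horizontal disc must be parametrized by all of $\overline{B}_{u}$, so you need: for \emph{every} $z\in\overline{B}_{u}$ there is an $x$ with $\pi_{x}f_{i_{1}i_{0}}(\tilde\eta_{i_{0}}(b_{\phi}(x)))=z$. Your degree remark is aimed only at nonemptiness ("something crossing $N(\Lambda)$"), and your fallback claim that "the graph property shows the full image disc lies in $D$" is false: by \eqref{eq:f-homotopy-cond} together with \eqref{eq:horiz-in-boundary}, the boundary $f(b(\partial B_{u}))$ is mapped \emph{outside} $N_{\eta_{i_1}}(U_{i_1})$, so the image disc necessarily leaves $D$ and $\beta$ is a proper subset of $f(b(\overline{B}_u))$. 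The missing ingredient is the parameter-dependent homotopy the paper builds,
\begin{equation*}
H(t,x)=\pi_{x}\,g\bigl(t,h(t,x)\bigr)-(1-t)z ,
\end{equation*}
which simultaneously deforms the disc (via its defining homotopy $h$) and the map (via the covering homotopy $g=h_{i_{1}i_{0}j}$); admissibility on $\partial B_{u}$ follows from \eqref{eq:f-homotopy-cond} plus \eqref{eq:horiz-in-boundary}, and $\deg(H_{0},B_{u},0)=\deg(H_{1},B_{u},0)=\det A_{i_{1}i_{0}j}\neq 0$ gives a solution for each $z$. Without this surjectivity your candidate graph functions $d_{c},d_{y}$ may only be defined on a proper subset of $\overline{B}_{u}$. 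A related circularity: you invoke Lemma~\ref{lem:hord-is-graph} to show $\beta$ is a graph, but that lemma \emph{assumes} its input is already a horizontal disc satisfying cone conditions — which is the conclusion you are after. The correct order is: surjectivity (degree) plus injectivity (cone condition) give a well-defined reparametrization $v$ with $d(x)=f(b(v(x)))$ and $\pi_{x}\tilde\eta_{i_1}(d(x))=x$; only then does Lemma~\ref{lem:graph-is_hord} supply the homotopy.
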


\begin{proof}
Without any loss of generality we can assume that $\phi=\Id$ and therefore
$D_{\phi}=D=N(\Lambda)$, $b_{\phi}=b$ and $f_{\phi}=f$.

 Take any $V_{j},U_{i_{0}}, U_{i_{1}}$ such that
$b(\overline{B}_{u})\subset p^{-1}(V_{j})$, $V_{j}\subset
U_{i_{0}}$ and
\begin{equation}
f(p^{-1}(V_{j})\cap| D| )\subset p^{-1}(U_{i_{1}}). \label{eq:coincide-j(x)}%
\end{equation}
The existence of such sets is a direct consequence of
Def.~\ref{def:f-covers}. From Def.~\ref{def:hor-cones} it follows
that there exists a homotopy $h:[0,1] \times\overline{B}_{u} \to
N_{\eta_{i_{0}}}(V_{j})$ and $\lambda^{*} \in \eta_{i_{0}}(V_{j})$
satisfying
\begin{align}
h(0,x)  &  =\eta_{i_{0}}(b(x)), \quad x \in\overline{B}_{u},\\
h(1,x)  &  =(\lambda^{*},x,0), \quad x \in\overline{B}_{u},\\
h(t,\partial B_{u})  &  \subset N^{-}_{\eta_{i_{0}}}(V_{j}), \quad t \in[0,1].
\end{align}
Let us denote by $g$ the homotopy $g=h_{i_{1}i_{0} j}$ appearing in
Def.~\ref{def:f-covers} and also let $A=A_{i_{1}i_{0} j}$. To study the set
$f(b(\overline{B}_{u}))$ and its behavior under the homotopy $h$ and $g$ it is
enough to use charts $(\eta_{i_{1}},U_{i_{1}})$ in the range and $(\eta
_{i_{0}},U_{i_{0}})$ in the domain.

We will show now that for any $z \in\overline{B}_{u}$ there exits $x \in
B_{u}$, such that
\begin{equation}
\pi_{x} f_{i_{1} i_{0}}( \tilde{\eta}_{i_{0}}( b(x)))= z. \label{eq:x-of-z}%
\end{equation}
Later we will prove the uniqueness of such $x$. This will allow us to define
map $z \mapsto f(b(x(z))) $, which we will show to be the horizontal disc we
are looking for.

To study equation (\ref{eq:x-of-z}) we consider a parameter dependent map
$H:[0,1]\times\overline{B}_{u} \to\mathbb{R}^{u}$ given by
\begin{equation}
H(t,x)= \pi_{x} g(t, h(t,x) ) - (1-t)z. \label{eq:hom-x-of-z}%
\end{equation}
Using the local Brouwer degree we will study the parameter dependent equation
\begin{equation}
H_{t}(x)=0. \label{eq:Hhor}%
\end{equation}
Any solution of (\ref{eq:Hhor}) with $t=0$ is a solution of problem
(\ref{eq:x-of-z}) and vice versa.

Observe first that the local Brouwer degree (see Section
\ref{sec:deg-prop} for the properties of the degree) of $H_{t}$ on
$B_{u}$ at $0$, denoted by $\deg(H_{t},B_{u},0)$, is defined
because from Def.~\ref{def:f-covers} it follows that
\begin{equation}
H_{t}(\partial B_{u}) \subset\pi_{x} g(t, \partial B_{u} ) \subset
\mathbb{R}^{u} \setminus\overline{B}_{u}.
\end{equation}
Hence
\begin{equation}
0 \notin H_{t}(\partial B_{u}).
\end{equation}
Therefore from the homotopy property of local degree it follows that
\begin{equation}
\deg(H_{t},B_{u},0)=\deg(H_{1},B_{u},0) \quad t \in[0,1].
\end{equation}
For $t=1$ we have
\begin{equation}
H_{1}(x)=Ax,
\end{equation}
and since $A$ is a nonsingular linear map, therefore
\begin{equation}
\deg(H_{t},B_{u},0)=\det{A}=\pm1 \quad t \in[0,1],
\end{equation}
hence $\deg(H_{t},B_{u},0)\ne0$ and by the solution property of
the local degree, there exists an $x$ in $B_{u}$ solving
(\ref{eq:Hhor}). Therefore there exists a solution for
(\ref{eq:x-of-z}).

Observe that this solution is unique. We can take any $x_{0},x_{1}
\in\overline{B}_{u}$, $x_{0} \neq x_{1}$, such that $f(b(x_{0})) \in D$ and
$f(b(x_{1})) \in D$. We will use the notation $q=f(b(x_{0}))$. By Def.
\ref{def:f-cone-cond} we can choose $i(q),j(q)$ such that $(V_{j(q)}%
,U_{i(q)})$ is a cone enclosing pair for $q$ and
\begin{equation}
f(p^{-1}(V_{j(x_{0})})\cap| D| )\subset p^{-1}(U_{i(q)}).
\label{eq:coincide-j(x)1}%
\end{equation}
From the cone condition for $b$ and map $f$ it follows, that
\begin{align}
%\label{eq:con-prop}
Q_{i(q),h}(f_{i(q)i(x_{0})}( \tilde{\eta}_{i(x_{0}%
)}(b(x_{0})) ) - f_{i(q)i(x_{0})}(\tilde{\eta}_{i(x_{0})}(b(x_{1}))))
> \label{eq:cones-for-Vjx} \\
m Q_{i(x_{0}),h}(\tilde{\eta}_{i(x_{0})}(b(x_{0})) - \tilde{\eta}_{i(x_{0}%
)}(b(x_{1}))) > 0.\nonumber
\end{align}

This immediately implies that
\[
\pi_{x} f_{i(q)i(x_{0})}( \tilde{\eta}_{i(x_{0})}(b(x_{0})) \neq\pi_{x}
f_{i(q)i(x_{0})}(\tilde{\eta}_{i(x_{0})}(b(x_{1})).
\]
Therefore, we have a well defined map $v:\overline{B}_{u} \to B_{u}$, given as
the solution of implicit equation $\pi_{x} f_{i(q)i(x_{0})}( \tilde{\eta
}_{i(x_{0})}(b(v(x)))=x$. We define a map $d:\overline{B}_{u} \to
N_{\eta_{i(q)}}(U_{i(q)})$ by $d(x)=f(b(v(x)))$.

Take now any $x_{0}$ such that $q=f(b(x_{0}))\in|D|$, and take a
cone enclosing pair $(V_{j(q)},U_{i(q)})$ for $q$ which satisfies
(\ref{eq:coincide-j(x)1}--\ref{eq:cones-for-Vjx}). We will show
that $d$ is a horizontal disc which satisfies cone condition for
$(V_{j(q)},U_{i(q)})$. From  (\ref{eq:cones-for-Vjx}))  it follows
that for any $x_{1},x_{2} \in\overline{B}_{u}$, $x_{1} \neq x_{2}$
we have
\begin{equation}
Q_{i(q),h}(\eta_{i(q)}(d(x_{1})) - \eta_{i(q)}(d(x_{2})) ) >0. \label{eq:cc-d}%
\end{equation}
Observe that this implies that the map $\tilde{d}=\tilde\eta_{i(q)} \circ d $
is Lipschitz. Namely, for any $x_{1},x_{2}\in\overline{B}_{u}$ from
(\ref{eq:cc-d}) we have
\begin{equation}
\beta_{i(q)}(\pi_{3}(\tilde{d}(x_{1})-\tilde{d}(x_{2}))) + \gamma_{i(q)}%
(\pi_{1}(\tilde{d}(x_{1})-\tilde{d}(x_{2}))) \leq\alpha_{i(q)}(\pi_{2}%
(\tilde{d}(x_{1})-\tilde{d}(x_{2})),
\end{equation}
therefore we obtain
\begin{align*}
M_{-} \| \pi_{3} (\tilde{d}(x_{1})-\tilde{d}(x_{2}))\|^{2}+ M_{-}\| \pi_{1}
(\tilde{d}(x_{1})-\tilde{d}(x_{2})) \|^{2}\\
\leq M_{+} \| \pi_{2}(\tilde{d}(x_{1})-\tilde{d}(x_{2}))\|^{2}= M_{+} \|x_{1}-
x_{2}\|^{2} ,
\end{align*}
which ensures continuity of $d$. To finish the argument
that $d$ is a horizontal disk for $(V_{j(q)},U_{i(q)})$ we need
the homotopy. This homotopy is obtained by applying
Lemmas~\ref{lem:hord-is-graph} and~\ref{lem:graph-is_hord}.
\end{proof}

From Lemma \ref{lem:im-of-hor} we obtain by induction the following lemma.

\begin{lemma}
\label{lem:image-of-horizontal} Let $n\geq1$. Assume that
\[
D\overset{f}{\Longrightarrow}D,
\]
and that $f$ satisfies (forward) cone conditions. Let $b:\overline{B}%
_{u}\rightarrow D$. Assume that for any $x\in B^{u}$ we have a $(V_{j(x)}%
,U_{i(x)})$ which is a cone enclosing pair for $b(x)$, and that $b$ is a
horizontal disc which satisfies cone condition for $(V_{j(x)},U_{i(x)})$.
Then
\[
\beta=\{y:y=f^{n}(b(x))\text{ \textit{and }}f^{i}(b(x))\in D\text{ \textit{for
}}i=1,\ldots,n\text{ \textit{for some }}x\in B^{u}\}
\]
is nonempty and for any $y\in\beta$ there exists a $(V_{j(y)},U_{i(y)})$ which
is a cone enclosing pair for $y$, such that $\beta$ is a horizontal disc which
satisfies cone condition for $(V_{j(y)},U_{i(y)})$.
\end{lemma}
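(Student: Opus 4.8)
The plan is to prove this by induction on $n$, using Lemma~\ref{lem:im-of-hor} as the base case and the inductive step. The statement for $n=1$ is exactly Lemma~\ref{lem:im-of-hor}, so there is nothing to do there except observe that the set
\[
\beta_1 = \{q \in D \mid q = f(b(x)) \text{ for some } x \in B^u\}
\]
coincides with the set $\beta$ from Lemma~\ref{lem:im-of-hor} (for $n=1$ the condition ``$f^i(b(x)) \in D$ for $i=1,\dots,n$'' is just ``$f(b(x)) \in D$'', which is built into the definition of $\beta_1$ as a subset of $D$).

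For the inductive step, assume the statement holds for some $n \geq 1$; I want to prove it for $n+1$. First I would let
\[
\beta_n = \{y : y = f^n(b(x)) \text{ and } f^i(b(x)) \in D \text{ for } i=1,\dots,n, \text{ for some } x \in B^u\}.
\]
By the inductive hypothesis, $\beta_n$ is nonempty and is a horizontal disc satisfying cone conditions for an appropriate cone enclosing pair $(V_{j(y)}, U_{i(y)})$ for each of its points $y$; in particular, there is a parametrization $b_n : \overline{B}_u \to D$ with $|b_n| = \beta_n$ which is a horizontal disc satisfying the cone condition for each $(V_{j(b_n(x))}, U_{i(b_n(x))})$. (Here I would be slightly careful: the conclusion of Lemma~\ref{lem:im-of-hor} produces a horizontal disc as a map $\overline{B}_u \to D$ obtained as $f \circ b \circ v$, so the inductive object is genuinely a parametrized disc, not merely a set, and the hypotheses of Lemma~\ref{lem:im-of-hor} about having a cone enclosing pair for each point of the disc are met.)

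Next I would apply Lemma~\ref{lem:im-of-hor} to the horizontal disc $b_n$ in place of $b$. This is legitimate precisely because the hypotheses of that lemma are that $D \overset{f}{\Longrightarrow} D$, that $f$ satisfies forward cone conditions, and that the input is a horizontal disc satisfying cone conditions with a cone enclosing pair attached to each point — all of which hold for $b_n$ by the inductive hypothesis. The lemma then yields that
\[
\beta' = \{q \in D \mid q = f(b_n(x)) \text{ for some } x \in B^u\}
\]
is nonempty and is, for each of its points, a horizontal disc satisfying the cone condition for an appropriate cone enclosing pair. Finally I would identify $\beta'$ with the set $\beta$ appearing in the statement for $n+1$: a point $y$ lies in $\beta'$ iff $y = f(f^n(b(x)))$ for some $x$ with $f^i(b(x)) \in D$ for $i = 1,\dots,n$ and additionally $y = f^{n+1}(b(x)) \in D$, i.e.\ iff $y = f^{n+1}(b(x))$ with $f^i(b(x)) \in D$ for $i = 1,\dots,n+1$; so $\beta' = \beta$ for $n+1$, completing the induction.

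The main obstacle, such as it is, is bookkeeping rather than mathematics: one must make sure that the object passed from step $n$ to step $n+1$ is a bona fide horizontal disc \emph{with the full data} required by Lemma~\ref{lem:im-of-hor} — namely a continuous parametrization together with, for every parameter value, a cone enclosing pair relative to which the cone condition holds — and that the set-theoretic description of $\beta$ at level $n+1$ matches the image $f(\beta_n)$ intersected with $D$. Both are immediate from the precise formulations of Lemma~\ref{lem:im-of-hor} and of horizontal discs in Definitions~\ref{def:hor-disk} and~\ref{def:hor-cones}, but they are the points where a careless induction could go wrong. No new geometric input (no further use of covering relations, degree theory, or cone algebra) is needed beyond what is already packaged inside Lemma~\ref{lem:im-of-hor}.
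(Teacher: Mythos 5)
Your proposal is correct and is exactly the argument the paper intends: the paper's entire ``proof'' is the single sentence that the lemma follows from Lemma~\ref{lem:im-of-hor} by induction, and your write-up just supplies the routine bookkeeping (base case, re-parametrization of $\beta_n$ as a disc with cone enclosing pairs, and the set-theoretic identification of $f(\beta_n)\cap D$ with $\beta$ at level $n+1$). No discrepancy.
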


Using Lemma \ref{lem:image-of-horizontal} we will now show that on a horizontal
disc which satisfies cone conditions we have a unique point whose forward
iterations remain inside of the ch-set.

\begin{lemma}
\label{lem:horInvUnique} Assume that
\[
D\overset{f}{\Longrightarrow}D,
\]
and that $f$ satisfies (forward) cone conditions. Let
$b:\overline{B}_{u} \to D$. Assume that for any $x\in B_{u}$ we
have a $(V_{j(x)},U_{i(x)})$ which is a cone enclosing pair for
$b(x)$, and that $b$ is a horizontal disc which satisfies cone
condition for $(V_{j(x)},U_{i(x)})$. Then there exists a unique
point $x^{\ast}\in\overline{B}_{u}$, such that
\begin{equation}
f^{n}(b(x^{\ast})) \in\inte D, \qquad n=0,1,2,\dots. \label{eq:fnInD}%
\end{equation}

\end{lemma}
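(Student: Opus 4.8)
The plan is to combine the nested‐intersection structure coming from Lemma~\ref{lem:image-of-horizontal} with the uniform Lipschitz/contraction estimates forced by the cone condition constant $m>1$. Fix the horizontal disc $b$. For each $n\ge 0$ let
\[
\beta_n=\{y: y=f^n(b(x))\text{ and }f^i(b(x))\in D\text{ for }i=1,\dots,n,\text{ for some }x\in\overline{B}_u\}.
\]
By Lemma~\ref{lem:image-of-horizontal} each $\beta_n$ is a horizontal disc satisfying the cone condition, so by Lemma~\ref{lem:hord-is-graph} it is the graph of a function over the unstable ball in the appropriate chart; in particular each $\beta_n$ is nonempty and connected. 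Pulling back, define
\[
S_n=\{x\in\overline{B}_u : f^i(b(x))\in D\text{ for }i=0,1,\dots,n\}.
\]
First I would show $S_n$ is nonempty, closed, and that $S_{n+1}\subset S_n$: nonemptiness and closedness follow from continuity of $f$ and compactness of $D$ together with Lemma~\ref{lem:image-of-horizontal}; monotonicity is immediate. Hence $S_\infty=\bigcap_n S_n$ is nonempty and closed, and any $x^\ast\in S_\infty$ satisfies $f^n(b(x^\ast))\in D$ for all $n$. The remaining issue is then to upgrade ``in $D$'' to ``in $\inte D$'' and to prove uniqueness.

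For uniqueness, suppose $x_0\ne x_1$ both lie in $S_\infty$. Work in the charts, writing $\xi_k(t)=\tilde\eta_{i(t)}\big(f^t(b(x_k))_\phi\big)$ for the two orbit points at time $t$ in suitable cone‐enclosing charts. The hypothesis that $b$ satisfies the cone condition gives $Q_{i,h}(\xi_0(0)-\xi_1(0))>0$, and the forward cone condition of Def.~\ref{def:f-cone-cond} then propagates this: at each step $Q_{i,h}$ of the difference stays positive and is multiplied by a factor $>m>1$, so
\[
Q_{i(n),h}\big(\xi_0(n)-\xi_1(n)\big) > m^n\, Q_{i(0),h}\big(\xi_0(0)-\xi_1(0)\big)\xrightarrow[n\to\infty]{}\infty.
\]
But the bounds \eqref{eq:M-est} together with $\xi_k(n)\in N_{\eta_{i(n)}}(\cdot)$ force $|Q_{i(n),h}(\xi_0(n)-\xi_1(n))|\le M_+(\|x\text{-part}\|^2+\|y\text{-part}\|^2)\le 2M_+\cdot(\operatorname{diam})^2$, a bound uniform in $n$ because the disc lives in the unit normal neighbourhoods. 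This contradiction shows $S_\infty$ is a single point $x^\ast$.

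Finally, to see $f^n(b(x^\ast))\in\inte D$ rather than merely $D$, I would argue by contradiction: if some $f^{n_0}(b(x^\ast))\in\partial D$, then it lies in $D^+\cup D^-$ (the topological entry/exit sets), since $\partial N(\Lambda)=N^+(\Lambda)\cup N^-(\Lambda)$ in the fibre directions while $\Lambda$ has no boundary. A point landing on the exit set $D^-$ contradicts staying in $D$ one step later (the covering relation expands through $N^-$, pushing the image out of $N(U_i)$, cf.~\eqref{eq:f-homotopy-cond}), and a point on $D^+$ would likewise have come from outside $D$ one step earlier — but more robustly, the cone‐condition graph description of $\beta_{n_0}$ shows the unstable coordinate of $f^{n_0}(b(x^\ast))$ ranges over the full ball only as $x$ ranges over $\overline{B}_u$, whereas the \emph{strict} expansion in \eqref{eq:cone-cond-Q} combined with the homotopy pushing $\partial B_u$ strictly outside $\overline{B}_u$ (Def.~\ref{def:f-covers}(2)) forces the selected point $x^\ast$ to have image with unstable coordinate strictly inside $B_u$; a parallel argument using the vertical/stable direction handles $\|y\|$. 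I expect the uniqueness step to be essentially routine given $m>1$ and \eqref{eq:M-est}; the main obstacle is the careful bookkeeping of which charts $(V_{j(y)},U_{i(y)})$ are used at each iterate so that the cone‐condition inequality \eqref{eq:cone-cond-Q} can actually be chained — this is exactly what Lemma~\ref{lem:image-of-horizontal} was set up to provide, so the argument should go through by invoking it at each step and passing to the limit.
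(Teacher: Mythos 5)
Your proposal is correct and follows essentially the same route as the paper: existence via Lemma~\ref{lem:image-of-horizontal} plus compactness (your nested closed sets $S_n$ are just a repackaging of the paper's convergent-subsequence argument), uniqueness by chaining the cone inequality \eqref{eq:cone-cond-Q} to get $m^n$-growth of $Q_{i,h}$ against the uniform bound from \eqref{eq:M-est}, and interiority via the exit/entry sets of the covering relation. No substantive differences.
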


\begin{proof}
Without any loss of generality we can assume that $\phi=\Id$ and therefore
$D_{\phi}=D=N(\Lambda)$, $b_{\phi}=b$ and $f_{\phi}=f$.

From Lemma~\ref{lem:image-of-horizontal} it follows that for any
$n\in\mathbb{N}$ there exists a point $x_{n}\in\overline{B}_{u}$ such that
\[
f^{k}(b(x_{n})) \in D\quad\mbox{for $k=0,1,\ldots,n.$ }
\]
Because $\overline{B}_{u}$ is compact and $D$ is closed, we can pass to a
convergent subsequence and obtain a point $x^{\ast}\in\overline{B}_{u}$, such
that
\begin{equation}
f^{k}(b(x^{\ast}))\in D\quad\mbox{for $k=0,1,\ldots$}.
\end{equation}
In fact we have
\begin{equation}
f^{k}(b(x^{\ast}))\in\inte D\quad\mbox{for $k=0,1,\ldots$}
\end{equation}
because from the definition of covering relation it follows that
points from $D^{+}$ are not in $f(D)$ and points from $D^{-}$ are
mapped out of $D$.

Let us show that such a point $x^{\ast}$ is unique. Let us assume
that we have two points $x_{0},x_{1}\in\overline{B}_{u}$,
$x_{0}\neq x_{1}$ satisfying (\ref{eq:fnInD}). From
Lemma~\ref{lem:image-of-horizontal} it follows that for any
$n=0,1,2,\dots$ points $q_{l,n}$ given by
\begin{equation}
q_{l,n}:=f^{n}(b(x_{l}))\in\inte D, \quad l=0,1
\end{equation}
belong to $b_{n}$, a horizontal disc satisfying cone conditions for a cone
enclosing pair $(V_{j_{n}}, U_{i_{n}})$ for $q_{0,n}$.

 From cone conditions
for $f$ and $b_{n}$ we have
\begin{align}
Q_{i_{n},h}(\tilde{\eta}_{i_{n}}(q_{0,n}) - \tilde{\eta}_{i_{n}}(q_{1,n}) ) >
mQ_{i_{n-1},h}(\tilde{\eta}_{i_{n-1}}(q_{0,n-1}) - \tilde{\eta}_{i_{n-1}%
}(q_{1,n-1}) ).
\end{align}
Hence
\begin{align}
Q_{i_{n},h}(\tilde{\eta}_{n}(q_{0,n}) - \tilde{\eta}_{n}(q_{1,n}) ) > m^{n}
Q_{0,h}(\tilde{\eta}_{i_{0}}(q_{0,0}) - \tilde{\eta}_{i_{0}}(q_{1,0}) ).
\label{eq:cc-iter}%
\end{align}
From (\ref{eq:cc-iter}) it follows that
\begin{align}
4M_{+} \geq\alpha(\pi_{x}\tilde{\eta}_{i_{n}}(q_{0,n})- \pi_{x} \tilde{\eta
}_{i_{n}}(q_{1,n})) \geq\nonumber\\
Q_{{i_{n}},h}(\tilde{\eta}_{i_{n}}(q_{0,n})- \tilde{\eta}_{i_{n}}(q_{1,n}))
>m^{n}Q_{i_{0},h}(\tilde{\eta}_{i_{0}}(q_{0,0})- \tilde{\eta}_{i_{0}}%
(q_{1,0}))\nonumber\\
=m^{n}\alpha(x_{0}-x_{1}) \geq m^{n}M_{-} \| x_{0}-x_{1}\|^{2}.
\nonumber\label{eq:n-power-bound}%
\end{align}
This for $\|x_{0} - x_{1}\| > 0$, since $m>1$, can not be true for all $n
=0,1,2,\dots$. Hence $x_{0}=x_{1}$. This finishes the proof.
\end{proof}

We will now show that points whose forward iterations remain inside of the ch-set form
 vertical discs which satisfy cone conditions.

\begin{theorem}
\label{thm:InvPos-vcc} Let $n\geq1$. Assume that
\[
D\overset{f}{\Longrightarrow}D,
\]
and that $f$ satisfies cone conditions. Then for any $\lambda_{0}\in\Lambda$
there exists a $b:\overline{B}_{s} \rightarrow D$, such that
\[
p(b(\overline{B}_{s})) = \{\lambda_{0}\},
\]
and for any $y\in\overline{B}_{s}$
\begin{equation}
f^{n}(b(y))\in\inte D \quad\mbox{for $n=0,1,2,\ldots$.}
\end{equation}
In addition, $b$ is a vertical disc which satisfies cone
conditions for any cones chart pair $(V_{j},U_{i})$ such that
$\lambda_{0} \in V_{j}$.

What is more, if $p(q)=\lambda_{0}$ and $f^{n}(q)\in D$ for all $n
\in\mathbb{N}$, then $q \in b(\overline{B}_{s})$.
\end{theorem}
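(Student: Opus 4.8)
The plan is to build the vertical disc $b$ as a limit of vertical discs obtained from preimages of horizontal discs, exploiting the duality between "forward iterates stay in $D$" and "the point lies on a vertical fiber over $\lambda_0$". First I would fix $\lambda_0\in\Lambda$ and, for each $n\ge 1$, consider the set $D_n=\{q\in D\ :\ f^k(q)\in\inte D\text{ for }k=0,\dots,n\}$. The key input is Lemma~\ref{lem:image-of-horizontal}: taking any horizontal disc $b_0$ satisfying cone conditions whose image lies in the fiber $p^{-1}(\lambda_0')$ for various $\lambda_0'$ (built via Lemma~\ref{lem:graph-is_hord} from a constant graph over $\overline B_u$), its $n$-th forward image is again a horizontal disc satisfying cone conditions. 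Dually — and this is where I invoke the backward cone conditions implicitly contained in the covering relation $D\overset{f}{\Longrightarrow}D$ together with the vertical-disc analogues of Lemmas~\ref{lem:hord-is-graph}--\ref{lem:graph-is_hord} — the set of points $q$ with $p(q)=\lambda_0$ and $f^k(q)\in D$ for $k=0,\dots,n$ contains a vertical disc $b_n:\overline B_s\to D$ over $\lambda_0$ which satisfies the vertical cone condition for every cones chart pair $(V_j,U_i)$ with $\lambda_0\in V_j$. The construction of $b_n$ parallels the proof of Lemma~\ref{lem:im-of-hor}, but with the roles of $u$ and $s$ (and of $Q_{i,h}$, $Q_{i,v}$) interchanged: one uses the local Brouwer degree on $B_s$ applied to the homotopy furnished by $D\overset{f}{\Longrightarrow}D$ to solve, for each $y\in\overline B_s$, the equation fixing the $\xi_s$-component, and then the vertical cone condition (coming from backward cone conditions for $f$) gives uniqueness and hence that $b_n$ is a graph $(d_c^{(n)},0\mapsto\text{?},y)$ over $\overline B_s$.

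Next I would pass to the limit. Each $b_n$ is, by Lemma~\ref{lem:hord-is-graph} (vertical version), the graph over $\overline B_s$ of continuous functions $(d_c^{(n)},d_x^{(n)})$ with values in $\{\lambda_0\}\times\overline B_u$, and the vertical cone condition gives a uniform Lipschitz bound on $\tilde\eta_i\circ b_n$ exactly as in the displayed estimate in the proof of Lemma~\ref{lem:im-of-hor} (the bound $M_-\|\pi_2(\cdot)\|^2+M_-\|\pi_1(\cdot)\|^2\le M_+\|y_1-y_2\|^2$ with $\alpha,\beta,\gamma$ permuted). Hence the family $\{b_n\}$ is equicontinuous and uniformly bounded, so by Arzelà–Ascoli a subsequence converges uniformly to a continuous $b:\overline B_s\to D$ with $p(b(\overline B_s))=\{\lambda_0\}$; since the vertical cone condition is a closed condition it is inherited by $b$, so $b$ is a vertical disc satisfying cone conditions for every cones chart pair $(V_j,U_i)$ with $\lambda_0\in V_j$. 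For each fixed $k$, $f^k(b_n(y))\in\inte D$ for all $n\ge k$, and $\inte D$ together with the covering-relation property that $D^+\cap f(D)=\emptyset$ and $f(D^-)\cap D=\emptyset$ (used exactly as in Lemma~\ref{lem:horInvUnique}) upgrades the closed-ness limit $f^k(b(y))\in D$ to $f^k(b(y))\in\inte D$; this gives the required invariance for all $n$.

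Finally, for the "what is more" clause: suppose $p(q)=\lambda_0$ and $f^n(q)\in D$ for all $n$. I would show $q$ lies on every $b_n$, hence on their limit $b$. Fix $n$; the point $q$ has $p(q)=\lambda_0$ and $f^k(q)\in D$ for $k\le n$, so $q$ belongs to the set over which $b_n$ was constructed; but that set, being a vertical disc that is a graph over the full $\overline B_s$ with the $\xi_s$-coordinate as parameter, is precisely $b_n(\overline B_s)$ provided we check the $\xi_s$-component of $q$ lies in $\overline B_s$ — which holds since $q\in D=N(\Lambda)$. Uniqueness of the solution of the implicit equation defining $b_n$ (the vertical analogue of the uniqueness step in Lemma~\ref{lem:im-of-hor}) then forces $q=b_n(\pi_s\tilde\eta_i(q))$; letting $n\to\infty$ along the convergent subsequence gives $q\in b(\overline B_s)$. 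The main obstacle I anticipate is setting up the vertical-disc version of Lemma~\ref{lem:im-of-hor} cleanly — i.e.\ making precise that the covering relation $D\overset{f}{\Longrightarrow}D$, which in Definition~\ref{def:f-covers} is stated asymmetrically in $u$ and $s$, nonetheless supplies the homotopy and degree-theoretic data needed to produce preimage vertical discs, and that "backward cone conditions" (Definition following Def.~\ref{def:f-cone-cond}) is exactly the ingredient that delivers the vertical cone condition $Q_{i,v}(\cdot)<0$ for $b_n$; once that lemma is in place the limiting argument is routine.
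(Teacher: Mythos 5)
Your construction of the approximating discs $b_n$ rests on machinery that the theorem's hypotheses do not provide. Theorem \ref{thm:InvPos-vcc} assumes only the forward covering relation $D\overset{f}{\Longrightarrow}D$ and \emph{forward} cone conditions; $f$ is not assumed invertible here, and no backward covering relation or backward cone conditions are available. Your ``vertical version of Lemma \ref{lem:im-of-hor}'' needs exactly that missing data: the degree argument in Lemma \ref{lem:im-of-hor} works because the homotopy $h_{i_1i_0j}$ pushes the $u$-boundary $N^-_{\eta}(V_j)$ outside $N(U_{i_1})$, i.e.\ it encodes topological \emph{expansion} in $x$; to solve an equation fixing the $\xi_s$-component of a preimage you would need the analogous expansion of $f^{-1}$ in $y$, which is precisely the covering relation $D\overset{f^{-1}}{\Longrightarrow}D$ with $u$ and $s$ reversed. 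Your phrase ``backward cone conditions implicitly contained in the covering relation $D\overset{f}{\Longrightarrow}D$'' is the crux of the gap: they are not implied. (Note that in the proof of Theorem \ref{th:main} this theorem is applied separately to $f$ and to $f^{-1}$, each with only its own forward data, so the theorem cannot be allowed to consume backward hypotheses.) A secondary flaw: the vertical cone condition (\ref{eq:cone-cond-v}) is a \emph{strict} inequality, hence an open rather than closed condition, so it does not automatically pass to your Arzel\`a--Ascoli limit; the limit would a priori only satisfy $Q_{i,v}\leq 0$.

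The paper avoids all of this by working fiberwise and only forwards. For each fixed $y\in\overline{B}_s$ it forms the flat horizontal disc $b_0(x)=\tilde{\eta}_i^{-1}(\lambda_0,x,y)$ (a cone-condition horizontal disc because $(V_j,U_i)$ is a cones chart pair), and Lemma \ref{lem:horInvUnique} — itself built from Lemma \ref{lem:image-of-horizontal} by compactness plus the expansion factor $m>1$ — yields a \emph{unique} $x^*(\lambda_0,y)$ whose full forward orbit stays in $\inte D$. The map $y\mapsto \tilde{\eta}_i^{-1}(\lambda_0,x^*(\lambda_0,y),y)$ is the desired disc; the strict vertical cone condition between $b(y_1)$ and $b(y_2)$ is proved by contradiction, using that on the fiber $Q_{i,h}=Q_{i,v}$, that the forward cone condition would then force $f(b(y_1))\neq f(b(y_2))$ while the uniqueness of Lemma \ref{lem:horInvUnique} on a horizontal disc through both image points forces equality. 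Continuity then follows from the cone estimate and (\ref{eq:M-est}), and the ``what is more'' clause is immediate from the uniqueness in Lemma \ref{lem:horInvUnique} applied at $y=\pi_y(\tilde{\eta}_i(\phi(q)))$. If you want to salvage your approach, you would have to either add invertibility and backward hypotheses (changing the statement) or replace the preimage construction by the paper's forward-only one.
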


\begin{proof}
Without any loss of generality we can assume that $\phi=\Id$, hence $b_{\phi
}=b$ for all vertical or horizontal discs and $f_{\phi}=f$.

Let us choose $\lambda_{0}\in\Lambda$ and some $q \in p^{-1}(\{ \lambda_{0}
\})\cap|D|$. Let $(V_{j},U_{i})$ be a cones chart pair and a cone enclosing
pair for $q$. Let us fix a $y \in\overline{B}_{s}$ and let $b_{0}:\overline
{B}_{u} \to D$ be a horizontal disc defined by
\begin{equation}
b_{0}(x)=\tilde{\eta}_{i}^{-1}(\lambda_{0},x,y).
\end{equation}
Since $(V_{j},U_{i})$ is a cones chart pair, for any
$x\in\overline {B}_{u}$ we have a $(V_{j(x)},U_{i(x)})$ which is a
cone enclosing pair for $b_{0}(x)$. This means that we can apply
Lemma ~\ref{lem:horInvUnique}. Hence there exists a uniquely
defined $x^{\ast}(\lambda_{0},y) \in\overline{B}_{u}$, such that
\begin{equation}
f^{n}(\tilde{\eta}_{i}^{-1}(\lambda_{0},x^{\ast}(\lambda_{0},y),y) \in\inte D,
\quad n=0,1,2,\dots.
\end{equation}

We define a map $b:\overline{B}_{s} \rightarrow D$
\begin{equation}
b(y)=\tilde{\eta}_{i}(\lambda_{0},x^{\ast}(\lambda_{0},y),y).
\label{eq:b-disc-def}%
\end{equation}
We will show that the map $b$ is a disc satisfying the assertion of our theorem.

We need to show that $b$ is a vertical disc which satisfies cone conditions
for $(V_{j},U_{i})$. We will first show that for any two points $y_{1},y_{2}
\in\overline{B}_{s}$ such that $y_{1}\neq y_{2}$ we have
\begin{equation}
Q_{i,v}\left(  \tilde{\eta}_{i}(b(y_{1})) - \tilde{\eta}_{i}(b(y_{2}))
\right)  <0. \label{eq:lip-on-y}%
\end{equation}
Let us assume that (\ref{eq:lip-on-y}) does not hold. Then since
$\pi _{\lambda}(b(y_{1}))=\pi_{\lambda}(b(y_{2}))=\lambda_0$ we
have
\begin{equation}
Q_{i,h}\left(  \tilde{\eta}_{i}(b(y_{1})) - \tilde{\eta}_{i}(b(y_{2}))\right)
= Q_{i,v}\left(  \tilde{\eta}_{i}(b(y_{1})) - \tilde{\eta}_{i}(b(y_{2}%
))\right)  \geq0. \label{eq:vert-contr}%
\end{equation}
Let $(V_{j_{1}}, U_{i_{1}})$ be a cone enclosing pair for
$f(b(y_{1}))$ such that $f(p^{-1}(V_{j})\cap|D|)\subset
p^{-1}(U_{i_{1}})$. From the fact that $f$ satisfies cone
conditions we have
\begin{equation}
Q_{i_{1},h}(\tilde{\eta}_{i_{1}}(f(b(y_{1}))) - \tilde{\eta}_{i_{1}}%
(f(b(y_{2})))) > m Q_{i,h}\left(  \tilde{\eta}_{i}(b(y_{1})) - \tilde{\eta
}_{i}(b(y_{2}))\right)  \geq0. \label{eq:fb-in-dir}%
\end{equation}
Let
us take any horizontal disc satisfying the cone condition for
$(V_{j_{1}} , U_{i_{1}})$ passing through $f(b(y_{1}))$ and
$f(b(y_{2}))$. From Lemma~\ref{lem:horInvUnique} it follows that
since $f^{n}(b(y_{i}))
\in\inte D$ for $i=1,2$ and all $n=0,1,2,\dots$ that $f(b(y_{1}))=f(b(y_{2}%
))$, but this is in a contradiction with (\ref{eq:fb-in-dir}). Hence
(\ref{eq:lip-on-y}) holds.

From (\ref{eq:lip-on-y}) we have
\begin{align}
0 >Q_{i,v}\left(  \tilde{\eta}_{i}(b(y_{1})) - \tilde{\eta}_{i}(b(y_{2}))
\right)  =\nonumber\\
\alpha_{i}( \pi_{x} (\tilde{\eta}_{i}(b(y_{1}))) - \pi_{x} (\tilde{\eta}%
_{i}(b(y_{2})))) - \beta_{i}(\pi_{y} (\tilde{\eta}_{i}(b(y_{1}))) - \pi_{y}
(\tilde{\eta}_{i}(b(y_{2}))))\nonumber\\
=\alpha(x^{\ast}(\lambda_{0},y_{1})-x^{\ast}(\lambda_{0},y_{2}))-\beta
(y_{1}-y_{2}). \label{eq:vert-condition1}%
\end{align}
From (\ref{eq:vert-condition1}) and (\ref{eq:M-est}) we have
\begin{equation}
\|x^{\ast}(\lambda_{0},y_{1})-x^{\ast}(\lambda_{0},y_{2})\|^{2}\leq\frac{
M_{+}}{M_{-}}\|y_{1}-y_{2}\|^{2},
\end{equation}
which together with  (\ref{eq:b-disc-def})  $b$ guarantees its
continuity. This by Lemma \ref{lem:graph-is_hord} means that $b$
is a vertical disc. Condition (\ref{eq:lip-on-y}) now ensures that
it satisfies cone conditions.
\end{proof}

We now come to our main result. Theorem \ref{th:main} will give us
the existence of a normally hyperbolic invariant manifold together
with its stable an unstable manifolds. At this stage, the
assumptions of the theorem might seem somewhat abstract. In
Section \ref{sec:ver-cond} we will show how these assumptions can
be verified in practice, and in Section \ref{sec:rel-classic} we
will highlight how the theorem relates
 to the classical version of the normally hyperbolic invariant manifold theorem \cite{HPS}.

\begin{theorem}
\label{th:main} Let $(D,$ $\{\eta_{i},U_{i},Q_{i,h},Q_{i,v}\}_{i\in I},$
$\{V_{j}\}_{j\in J})$ be a ch-set with cones. Let $f:\xi_{u}\oplus\xi
_{s}\rightarrow\xi_{u}\oplus\xi$ be a homeomorphism. If $f$ satisfies forward
and backward cone conditions, then:

\begin{enumerate}
\item There exists a continuous function
\[
\chi:\Lambda\rightarrow\mathrm{int}D,
\]
such that $p(\chi(\lambda))=\lambda$ for all $\lambda\in\Lambda,$ and
\[
\chi(\Lambda)=\mathrm{inv}(f,D):=\{z\in D|f^{n}(z)\in D\text{ for all }%
n\in\mathbb{Z}\}.
\]

\item There exist $C^{0}$
submanifolds $W^{u}$ and $W^{s}$ such that
\[
W^{u}\cap W^{s}=\chi(\Lambda),
\]
$W^{u}$ consists of all points whose backward iterations converge to
$\chi(\Lambda),$ and $W^{s}$ consists of all points whose forward iterations
converge to $\chi(\Lambda)$.
\end{enumerate}
\end{theorem}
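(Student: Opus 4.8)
The plan is to build the invariant manifold $\chi(\Lambda)$ fiber by fiber, using Theorem~\ref{thm:InvPos-vcc} for $f$ together with its backward analogue for $f^{-1}$. First I would fix $\lambda_0\in\Lambda$ and apply Theorem~\ref{thm:InvPos-vcc} to $f$ to obtain a vertical disc $b^s_{\lambda_0}:\overline{B}_s\to D$ over $\lambda_0$ consisting of points whose forward iterates stay in $\operatorname{int}D$, and moreover containing \emph{every} point $q$ over $\lambda_0$ with $f^n(q)\in D$ for all $n\ge 0$. Applying the same theorem to $f^{-1}$ (which satisfies forward cone conditions with $u$ and $s$ swapped, by the definition of backward cone conditions) yields a horizontal disc $b^u_{\lambda_0}:\overline{B}_u\to D$ over $\lambda_0$ consisting of points whose backward iterates stay in $\operatorname{int}D$, and containing every such point over $\lambda_0$. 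Then $\operatorname{inv}(f,D)\cap p^{-1}(\lambda_0)= |b^u_{\lambda_0}|\cap|b^s_{\lambda_0}|$. The key point is that this intersection is a single point: using Lemma~\ref{lem:hord-is-graph} and its vertical analogue, in the chart $\tilde\eta_i$ the horizontal disc is a graph $x\mapsto(\lambda_0,x,d^u_y(x))$ with $d^u_y$ Lipschitz with constant $<1$ (from the cone condition and the $M_\pm$ bounds), while the vertical disc is a graph $y\mapsto(\lambda_0,d^s_x(y),y)$ with $d^s_x$ Lipschitz with constant $<1$; a standard contraction argument then gives a unique intersection point, which I define to be $\chi(\lambda_0)$.

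Next I would establish continuity of $\chi$. The disc $b^s_{\lambda_0}$ depends on $\lambda_0$ through the point $x^*(\lambda_0,y)$ constructed in Theorem~\ref{thm:InvPos-vcc}, and one needs the dependence on $\lambda_0$ to be continuous. I would argue this by the same compactness-plus-uniqueness scheme used in Lemma~\ref{lem:horInvUnique}: if $\lambda_n\to\lambda_0$, then any accumulation point of $\chi(\lambda_n)$ lies over $\lambda_0$, has all its forward and backward iterates in $D$ (since $D$ is closed and $f^k$ is continuous), and hence equals $\chi(\lambda_0)$ by the uniqueness just proved; as the $\chi(\lambda_n)$ lie in the compact set $|D|$, this forces $\chi(\lambda_n)\to\chi(\lambda_0)$. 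Since $p\circ\chi=\operatorname{Id}_\Lambda$ and $\chi(\lambda)\in\operatorname{int}D$ by construction, part (1) follows, with $\chi(\Lambda)=\operatorname{inv}(f,D)$ because every point of $\operatorname{inv}(f,D)$ over $\lambda$ lies in both discs over $\lambda$ and therefore equals $\chi(\lambda)$.

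For part (2) I would set $W^s=\bigcup_{\lambda\in\Lambda}|b^s_\lambda|$ and $W^u=\bigcup_{\lambda\in\Lambda}|b^u_\lambda|$. To see $W^s$ is a $C^0$ submanifold, I would again invoke continuity in the base point: the family $\{b^s_\lambda\}$ gives, in each good chart, a map $(\lambda,y)\mapsto(\lambda,x^*(\lambda,y),y)$ which is continuous (Lipschitz in $y$ uniformly in $\lambda$, continuous in $\lambda$ by the accumulation argument above) and injective, hence $W^s$ is locally a topological graph of dimension $c+s$; similarly $W^u$ has dimension $c+u$. By construction $W^u\cap W^s$ over each $\lambda$ is $\{\chi(\lambda)\}$, so $W^u\cap W^s=\chi(\Lambda)$. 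Finally, the asymptotic characterization: if $z\in W^s$ then the cone-condition inequality \eqref{eq:cone-cond-Q}, applied to $z$ and $\chi(p(z))$ in the $Q_{i,v}$/$Q^{inv}$ form, shows $\beta$-distance (the stable component) is contracted by a definite factor under $f$ along the vertical disc, while the central and unstable components stay controlled, forcing $f^n(z)\to\chi(\Lambda)$; the reverse inclusion — that any point with $f^n(z)\to\chi(\Lambda)$ lies in $W^s$ — follows because such a point eventually stays near $\chi(\Lambda)$ with all forward iterates in $D$, hence lies in $b^s_{p(z)}$.

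The main obstacle I anticipate is the continuity of $\chi$ and, more generally, the joint continuity in the base point needed to conclude that $W^u,W^s$ are genuine $C^0$ submanifolds rather than merely set-theoretic unions of discs. The per-fiber constructions of Theorems~\ref{thm:InvPos-vcc} are carried out chart by chart and for each $\lambda_0$ separately, so transferring the uniform Lipschitz bounds (which do hold, thanks to the uniform $M_\pm$ estimates \eqref{eq:M-est}) into uniform local graph representations, and handling chart overlaps so that the pieces glue to a manifold, is the delicate bookkeeping step; the contraction/uniqueness and the asymptotics are then comparatively routine consequences of the cone inequalities already in hand.
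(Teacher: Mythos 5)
Your construction of $\chi$ fiber by fiber from Theorem~\ref{thm:InvPos-vcc} applied to $f$ and to $f^{-1}$, the continuity of $\chi$ via compactness plus uniqueness, and the definition $W^s=\bigcup_\lambda |b^s_\lambda|$, $W^u=\bigcup_\lambda |b^u_\lambda|$ are exactly the paper's route. Two remarks on details. First, for the single intersection point you do not need (and do not actually have) Lipschitz constants $<1$: the $M_\pm$ bounds only give constant $\sqrt{M_+/M_-}$, which may exceed $1$. Uniqueness is immediate instead from the observation that for two points in the same fiber the $\theta$-difference vanishes, so $Q_{i,h}=Q_{i,v}$ on their difference; a pair of distinct intersection points would have to make this quantity simultaneously positive (horizontal cone condition) and negative (vertical cone condition). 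Existence of the intersection still needs the homotopy/degree structure of the discs, not a contraction mapping.

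The genuine gap is in your last step, the claim that for $z\in W^s$ the ``stable component is contracted by a definite factor under $f$ along the vertical disc.'' The forward cone condition~(\ref{eq:cone-cond-Q}) only constrains pairs $x_1,x_2$ with $Q_{i_0,h}(x_1-x_2)\geq 0$; for $z$ and $\chi(p(z))$ on a common vertical disc one has $Q_{i,v}<0$, hence $Q_{i,h}<0$, and the hypothesis fails, so the inequality gives you nothing about their images. Worse, $f$ does not preserve fibers, so $f(z)$ and $f(\chi(p(z)))$ generally lie over different base points and there is no single vertical disc containing both on which to iterate a contraction estimate. The paper sidesteps this entirely with an $\omega$-limit set argument: $\omega(f,z)\subset D$ is contained in $W^s$ (since $W^s$ is closed), and it must also lie in $W^u$, because a limit point $q\notin W^u$ would have some $f^{-n}(q)\notin D$ while $f^{-n}(q)=\lim_k f^{n_k-n}(z)\in\omega(f,z)\subset D$; hence $\omega(f,z)\subset W^u\cap W^s=\chi(\Lambda)$. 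You should replace your contraction step with an argument of this type (or supply a genuinely new estimate that tracks the drift of the base point), since as written that step does not follow from the stated cone conditions.
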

\begin{remark}
Let us note that the stable and unstable manifolds $W^u$ and $W^s$ from Theorem \ref{th:main} are different from the vector bundles $\xi_u$ and $\xi_s$. The bundles $\xi_u$ and $\xi_s$ provide only approximate coordinates in which we look for $W^u$ and $W^s$, in practice $W^u$ will be close to $\xi_u$ and $W^s$ close to $\xi_s$, but they need not precisely match.
\end{remark}
\begin{proof}[Proof of Theorem \ref{th:main}]
Observe first that vertical cones for the inverse map coincide with horizontal
cones for the forward map (see (\ref{eq:cone-h}), (\ref{eq:cone-v-inv}))
\begin{equation}
C^{-}(Q_{i,v}^{-})=C^{+}(Q_{i,h}). \label{eq:cones-sym}%
\end{equation}
Take $\lambda_{0}$ from $\Lambda.$ From the fact that $f$ satisfies cone
conditions, by Theorem \ref{thm:InvPos-vcc} we have a vertical disc
$b:B_{s}\rightarrow D\cap p^{-1}(\lambda_{0})$ of forward invariant points
($b$ is vertical for the ch-set $(D,$ $\{\eta_{i},U_{i},Q_{i,h},Q_{i,v}%
\}_{i\in I},$ $\{V_{j}\}_{j\in J})$). Since $f^{-1}$ satisfies cone
conditions, by Theorem \ref{thm:InvPos-vcc} we also have a "vertical" disc
$b^{-}:B_{u}\rightarrow D\cap p^{-1}(\lambda_{0})$ of backward invariant
points ($b^{-}$ is vertical with respect to the ch-set $(D,$ $\{\eta_{i}%
,U_{i},Q_{i,h}^{-},Q_{i,v}^{-}\}_{i\in I},$ $\{V_{j}\}_{j\in J})$ with
reversed roles of $u$ and $s$). We can define%
\[
\chi(\lambda_{0}):=b(B_{s})\cap b^{-}(B_{u}).
\]
The function is $\chi$ properly defined since by (\ref{eq:cones-sym}) $b^{-}$
is a horizontal disc for the ch-set $(D,$ $\{\eta_{i},U_{i},Q_{i,h}%
,Q_{i,v}\}_{i\in I},$ $\{V_{j}\}_{j\in J})$. A horizontal disc and a vertical
disc which satisfy cone conditions and are contained in $p^{-1}(\lambda_{0})$
intersect at a single point.

We now have to show that $\chi$ is continuous. Take any sequence $\lambda
_{n}\rightarrow\lambda_{0}.$ By the fact that $D$ is compact we can take a
convergent subsequence $\chi(\lambda_{v_{n}})\rightarrow z.$ We need to show
that $z=\chi(\lambda_{0}).$ For any $k\in\mathbb{N}$, by continuity of
functions $f,$ $f^{-1}$ and closeness of $D$, we know that $\lim
_{n\rightarrow\infty}f^{k}(\lambda_{v_{n}}),\lim_{n\rightarrow\infty}%
f^{-k}(\lambda_{v_{n}})\in D,$ hence $z\in\mathrm{inv}(f,D).$ The fact that
$z=\chi(\lambda_{0})$ follows from the uniqueness of the choice of
$\chi(\lambda_{0}).$

Now we will construct the manifold $W^{s}.$ For any $\lambda$ from $\Lambda$,
by Theorem \ref{thm:InvPos-vcc} we have a vertical disc $b_{\lambda}%
:B_{s}\rightarrow D\cap p^{-1}(\lambda)$ of forward invariant points. We can
define%
\begin{equation}
W^{s}=\bigcup\limits_{\lambda\in\Lambda}b_{\lambda}(B_{s}).\label{eq:Ws}%
\end{equation}
We need to show that $W^{s}$ is $C^{0}.$ We take any Cauchy
sequence $x_{n}$ from $W^{s}.$ From the compactness of $D$ we know
that $x_{n}$ converges to some $x_{0}\in D.$ We will show that
$x_{0}\in W^{s}.$ Since for any $k,n>0$
we have $f^{k}(x_{n})\in D$, by continuity of $f$ we have that $f^{k}%
(x_{0})\in D.$ Letting $\lambda_{0}=\pi_{\lambda}(\phi(x_{0}))$,
we can see that $x_{0}\in p^{-1}(\lambda_{0}),$ which by Theorem
\ref{thm:InvPos-vcc} means that $x_{0}$ lies on the unique
vertical disc $b_{\lambda_{0}}$ of forward invariant points in
$p^{-1}(\lambda_{0}),$ hence by (\ref{eq:Ws}) we have $x_{0}\in
W^{s}.$

The construction of $W^{u}$ is analogous.

We will now show that for any $x_{0}\in W^{s}$ $f^{n}(x_{0})$ converges to  $\chi(\Lambda) $ as $n$ goes to infinity.
Let us consider the limit set of the point $x_{0}$%
\[
\omega(f,x_{0})=\{q|\lim_{k\rightarrow\infty}f^{n_{k}}(x_{0})=q\text{ for some
}n_{k}\rightarrow\infty\}.
\]
If we can show that $\omega(f,x_{0})$ is contained in $W^{u}\cap W^{s}=\chi(\Lambda),$
then this will conclude our proof. We take any $q=\lim_{k\rightarrow\infty
}f^{n_{k}}(x_{0})$ from $\omega(f,x_{0}).$ We need to show that $q\in
W^{u}\cap W^{s}.$ By continuity of $W^{s}$ we know that $q\in W^{s}.$ Suppose
now that $q\notin W^{u}.$ This would mean that there exists an $n>0$ for which
$f^{-n}(q)\notin D.$ Since%
\[
\lim_{k\rightarrow\infty}f^{n_{k}-n}(x_{0})=f^{-n}(q),
\]
we have that $f^{-n}(q)\in\omega(f,x_{0}),$ but this contradicts the fact that
$\omega(f,x_{0})\subset D.$

Showing that all backward iterations of points in $W^u$ converge to $\chi(\Lambda)$ is analogous.
\end{proof}

%TCIDATA{OutputFilter=latex2.dll}
%TCIDATA{Version=5.00.0.2606}
%TCIDATA{LaTeXparent=0,0,online-edit.tex}
\section{Rigorous numerical verification of covering and cone conditions} \label{sec:ver-cond}

In this section we will show how assumptions of Theorem \ref{th:main} can be verified. We set up
our conditions  so that they are verifiable using rigorous numerics. To verify them it is enough
to obtain estimates of the derivatives of the local maps on the ch-set. We will show that
the assumptions of Theorem \ref{th:main} follow from explicit algebraic conditions on these estimates.
An example of how this works in practice will be shown in Section \ref{sec:Henon}.

We start with the definition of an interval enclosure of a derivative.
\begin{definition}
Let $U\subset\mathbb{R}^{n}$ and $f:U\rightarrow\mathbb{R}^{n}$ be a $C^{1} $
function. We define the \emph{interval enclosure} of $df$ on the set $U$ as
\[
\lbrack df(U)]=\left\{  A\in\mathbb{R}^{n\times n}|A_{ij}\in\left[  \inf_{x\in
U}\frac{df_{i}}{dx_{j}}(x),\sup_{x\in U}\frac{df_{i}}{dx_{j}}(x)\right]
\text{ for all }i,j=1,\ldots,n\text{ }\right\}  .
\]

\end{definition}

\subsection{Verification of the covering condition}

In this section we show how covering relations on ch-sets follow from algebraic conditions
on the derivatives of local maps.

\begin{theorem}
\label{th:diff-cover}Assume that $(D,\{\eta_{i},U_{i},Q_{i,h},Q_{i,v}\}_{i\in
I},\{V_{j}\}_{j\in J})$ is a ch-set with cones, with convex $\eta_{i}(V_{j})$
for all $V_{j}\subset U_{i}$. Assume that $f:D\rightarrow\xi_{u}\oplus\xi_{s}$
is such that for any $V_{j}\subset U_{i_{0}}$ there exists $i_{1}$ for which
\[
f(p^{-1}(V_{j})\cap|D|)\subset p^{-1}(U_{i_{1}}).
\]
Assume that for any such $j,i_{0},i_{1}$ the function $f_{i_{1}i_{0}}$ is
differentiable, and for any $\theta\in\eta_{i_{0}}(V_{j})$ we have
\begin{equation}
f_{i_{1}i_{0}}(\theta,0,0)\in\eta_{i_{1}}(U_{i_{1}})\times\overline{B_{u}%
}(0,\varepsilon_{u})\times\overline{B_{s}}(0,\varepsilon_{s}%
)\label{eq:zero-image-lem}%
\end{equation}
for some $1 > \varepsilon_{u},\varepsilon_{s}>0$
($\varepsilon_{u},\varepsilon _{s}$ can be dependent on the choice
of $j,i_{0},i_{1}$). If for any matrix $A\in\lbrack
Df_{i_{1}i_{0}}\left(  N_{\eta_{i_{0}}}(V_{j})\right)  ]$,
the following conditions hold%
\begin{align}
\inf\{\Vert(\pi_{x}\circ A)(0,x,y)\Vert &  :||x||=1,||y||\leq1\}>1+\varepsilon
_{u},\label{eq:cor-exp}\\
\sup\{\Vert(\pi_{y}\circ A)(0,x,y)\Vert &  :||x||\leq1,||y||\leq
1\}<1-\varepsilon_{s},\label{eq:cor-contr}%
\end{align}
then $D\overset{f}{\Longrightarrow}D.$
\end{theorem}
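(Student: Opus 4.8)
The plan is to verify each of the three conditions in Definition~\ref{def:f-covers} by constructing an explicit homotopy that straightens the local map $f_{i_1 i_0}$ to its ``linear model'' $(\lambda^*, A x, 0)$, and then checking that the expansion/contraction estimates \eqref{eq:cor-exp}--\eqref{eq:cor-contr} force the required disjointness from the entry and exit faces along the whole homotopy. First I would fix $j$, $i_0$, $i_1$ as in the hypotheses and work entirely in the coordinates $N_{\eta_{i_0}}(V_j) = \eta_{i_0}(V_j) \times \overline B_u \times \overline B_s$, writing $g = f_{i_1 i_0}$ and decomposing $g(\theta,x,y) = (g_\lambda, g_x, g_y)$. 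The natural candidate homotopy is the straight-line deformation in two stages (or one combined stage): first contract the stable component and the ``nonlinear remainder'' of the unstable component, keeping $\theta$ fixed at some $\lambda^* \in \eta_{i_1}(U_{i_1})$ in the limit. Concretely, using convexity of $\eta_{i_0}(V_j)$ together with \eqref{eq:zero-image-lem}, I would set something like
\[
h(t,(\theta,x,y)) = \bigl((1-t)g_\lambda(\theta,x,y) + t\lambda^*,\ (1-t)g_x(\theta,x,y) + t\, Lx,\ (1-t)g_y(\theta,x,y)\bigr),
\]
where $L = Dg_x(\theta_0,0,0)$ restricted to the $x$-variables (a representative linear part, which lies in $[Df_{i_1 i_0}(N_{\eta_{i_0}}(V_j))]$, so \eqref{eq:cor-exp} applies to it) and $\lambda^* = g_\lambda(\theta_0,0,0)$ for a chosen basepoint $\theta_0$; then $A := L$ serves as the map $A_{i_1 i_0 j}$. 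If $u = 0$ the $x$-coordinate is absent and the limit is simply $(\lambda^*, 0)$, giving condition (3).

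The heart of the argument is the two disjointness conditions \eqref{eq:f-homotopy-cond}. For the exit face $N^-_{\eta_{i_0}}(V_j)$, where $\|x\| = 1$: by the mean value inequality along the segment from $(\theta,0,y)$ to $(\theta,x,y)$ — here convexity of $\eta_{i_0}(V_j)$ guarantees the segment stays in the domain — combined with \eqref{eq:zero-image-lem} and \eqref{eq:cor-exp}, one gets $\|g_x(\theta,x,y)\| \geq \|x\| \cdot \inf\{\|(\pi_x \circ A)(0,x,y)\| : \ldots\} - \varepsilon_u > 1$ whenever $\|x\| = 1$. Along the homotopy the unstable component is a convex combination $(1-t)g_x + t Lx$, and since \emph{both} $g_x$ and $L$ satisfy the same lower bound $> 1 + \varepsilon_u$ on the unit sphere in $x$ (uniformly in the other variables), the convex combination also has norm $> 1$; hence $h(t,\cdot)$ stays outside $N_{\eta_{i_1}}(U_{i_1})$, whose $x$-component has norm $\le 1$. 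For the entry face condition — the image must avoid $N^+_{\eta_{i_1}}(U_{i_1})$, where $\|y\| = 1$ in the target — I would use \eqref{eq:cor-contr}: again by the mean value estimate from $(\theta,x,0)$, $\|g_y(\theta,x,y)\| \le \|y\| \cdot \sup\{\ldots\} + \varepsilon_s < 1$, and along the homotopy the $y$-component is $(1-t)g_y$, which only shrinks, so it never reaches norm $1$. Note that the first disjointness condition only needs to hold on the exit face (where $\|x\|=1$), while the second must hold on the whole neighborhood $N_{\eta_{i_0}}(V_j)$ — but the contraction estimate \eqref{eq:cor-contr} is stated for all $\|x\| \le 1$, $\|y\| \le 1$, precisely covering this.

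The main obstacle I anticipate is bookkeeping around the chart changes: the estimate \eqref{eq:cor-exp}--\eqref{eq:cor-contr} is assumed for \emph{every} $A$ in the interval enclosure over $N_{\eta_{i_0}}(V_j)$, so one must be careful that the mean value theorem is applied with derivative matrices genuinely lying in that enclosure (this is automatic since the segment of integration stays in $N_{\eta_{i_0}}(V_j)$, using convexity), and that $Dg_x(\theta_0,0,0)$ is among them. A second delicate point is condition \eqref{eq:f-homotopy-cond2}: one needs $A(\partial B_u) \subset \mathbb{R}^u \setminus \overline{B_u}$, i.e.\ $\|Ax\| > 1$ for $\|x\| = 1$, which follows from \eqref{eq:cor-exp} applied to $A$ itself with $y = 0$, giving $\|(\pi_x \circ A)(0,x,0)\| > 1 + \varepsilon_u > 1$; here $A = Dg_x(\theta_0,0,0)$ (the projection onto the $x$-block) and one must also check $A$ is nonsingular, which is immediate from this lower bound. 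Everything else is routine continuity and the homotopy equalities $h(0,\cdot) = g$, $h(1,\cdot) = (\lambda^*, Ax, 0)$, which hold by construction. Finally, by the Remark following Definition~\ref{def:f-covers}, it suffices to exhibit the homotopy for one admissible pair $(U_{i_0}, U_{i_1})$ per $j$, so no further work is needed to handle the non-uniqueness in \eqref{eq:VinU}--\eqref{eq:fgoodatals}.
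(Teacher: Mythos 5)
Your overall architecture matches the paper's: a linear interpolation homotopy ending at $(\lambda^*,Ax,0)$, with the mean--value (integral) form of the derivative and convexity of $\eta_{i_0}(V_j)$ feeding the estimates. But the step on which the whole exit-face estimate rests is a false inference as written. You claim that because $\|g_x(\theta,x,y)\|>1$ and $\|Lx\|>1+\varepsilon_u$ on $\|x\|=1$, the convex combination $(1-t)g_x(\theta,x,y)+tLx$ also has norm $>1$. A convex combination of two vectors each of norm greater than $1$ can have arbitrarily small norm (take $v$ and $-v$); norm bounds on the endpoints give no bound on the interpolant. The hypothesis \eqref{eq:cor-exp} saves you only if the interpolation is pushed down to the level of the derivative matrices: writing $g_x(\theta,x,y)=g_x(\theta,0,0)+\pi_x\bigl(B(0,x,y)\bigr)$ with $B=\int_0^1 Df_{i_1i_0}(\theta,sx,sy)\,ds$, one must recognize that $(1-t)B(0,x,y)+t\,Df_{i_1i_0}(\theta_0,0,0)(0,x,0)$ can be rewritten as $A'(0,x,(1-t)y)$ for a \emph{single} matrix $A'$ in the interval enclosure (the enclosure is an entrywise product of intervals, hence closed under such mixing), after which \eqref{eq:cor-exp} applies to $A'$ and the constant term of size $\le\varepsilon_u$ is absorbed. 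The paper sidesteps this entirely by building the interpolation into the integrand: its homotopy has unstable component $(1-\alpha)f_u(\theta,0)+\pi_x\bigl(A^{\alpha}(0,x,(1-\alpha)y)\bigr)$ with $A^{\alpha}=\int_0^1 Df_{i_1i_0}((1-\alpha)\theta+\alpha\theta^*,(1-\alpha)tp)\,dt$, which manifestly lies in the enclosure for every $\alpha$, so \eqref{eq:cor-exp} applies directly along the whole homotopy.

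A second, more technical defect: your $\theta$-component $(1-t)g_\lambda(\theta,x,y)+t\lambda^*$ interpolates in the \emph{target} chart, but Definition~\ref{def:f-covers} requires the homotopy to take values in $\eta_{i_1}(U_{i_1})\times\mathbb{R}^u\times\mathbb{R}^s$, and $\eta_{i_1}(U_{i_1})$ is not assumed convex (only the $\eta_{i_0}(V_j)$ are). The paper instead deforms the \emph{argument}, setting the $\theta$-component to $f_c((1-\alpha)\theta+\alpha\theta^*,(1-\alpha)p)$: the segment stays in the convex set $\eta_{i_0}(V_j)\times\overline{B}_u\times\overline{B}_s$, and the image lands in $\eta_{i_1}(U_{i_1})$ automatically by \eqref{eq:fgoodatals}. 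Both defects are repairable without changing your plan, but as stated the proof does not go through.
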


\begin{proof}
For $j,i_{0},i_{1}$ such that
\[
f(p^{-1}(V_{j})\cap|D|)\subset p^{-1}(U_{i_{1}}),
\]
we need to define the homotopy $h=h_{i_{1}i_{0},j}$ from Definition
\ref{def:f-covers}. We will use the notations $p=(x,y)$ and $f_{c}%
:=\pi_{\theta}\circ f_{i_{1}i_{0}}$, $f_{u}:=\pi_{x}\circ f_{i_{1}i_{0}}$,
$f_{s}:=\pi_{y}\circ f_{i_{1}i_{0}}$. We take any $\theta^{\ast}\in\eta
_{i_{0}}(V_{j})$ and define $h$ as%
\begin{multline*}
h_{\alpha}(\theta,p)=\left(  f_{c}((1-\alpha)\theta+\alpha\theta^{\ast
},(1-\alpha)p),(1-\alpha)f_{u}(\theta,0),(1-\alpha)f_{s}(\theta,0)\right)  \\
+\left(  0,\pi_{x}\left(  \int_{0}^{1}
Df_{i_1 i_0}((1-\alpha)\theta+\alpha\theta^{\ast},(1-\alpha)tp)\,dt\cdot\left(0,x,(1-\alpha)y\right)  \right)  \right.  \\
,\left.  \pi_{y}\left(  (1-\alpha)\int_{0}^{1} Df_{i_1 i_0}(\theta,tp)dt\cdot (0,p)\right)  \right)  .
\end{multline*}
Since
\[
f_{i_{1}i_{0}}(\theta,p)-f_{i_{1}i_{0}}(\theta,0)=\int_{0}^{1} Df_{i_1 i_0}(\theta,tp)\,dt\cdot (0,p),
\]
we have $h_{0}(\theta,p)=f_{i_{1}i_{0}}(\theta,p).$ For $\alpha=1$ we have%
\[
h_{1}(\theta,p)=\left(  \lambda^{\ast},Ax,0\right)  ,
\]
with%
\begin{align*}
\lambda^{\ast} &  =f_{c}(\theta^{\ast},0),\\
Ax &  =\pi_{x}\left(  Df_{i_1 i_0}(\theta^{\ast
},0,0)\cdot\left(  0,x,0\right)  \right)  .
\end{align*}

For any $\alpha$ from $[0,1],$ from the fact that
\[
A^{\alpha}:=\int_{0}^{1}Df_{i_1 i_0}%
((1-\alpha)\theta+\alpha\theta^{\ast},(1-\alpha)tp)dt\in\left[  Df_{i_1 i_0}(N_{\eta_{i_{0}}}(V_{j}))\right]  ,
\]
for any $(\theta,p)\in N_{\eta_{i_{0}}}^{-}(V_{j})$ using
(\ref{eq:zero-image-lem}) and (\ref{eq:cor-exp}) we have%
\begin{align}
&  |\pi_{x}(h_{\alpha}(\theta,p))|\label{eq:tmp2}\\
&  =\left\vert (1-\alpha)f_{u}(\theta,0)\phantom{\int_0^1}\right.  \nonumber\\
&  \left.  +\pi_{x}\left(  \int_{0}^{1} Df_{i_1 i_0}((1-\alpha)\theta+\alpha\theta^{\ast},(1-\alpha)tp)\,dt\cdot\left(
0,x,(1-\alpha)y\right)  \right)  \right\vert \nonumber\\
&  \geq\left\vert \pi_{x}\left(  \int_{0}^{1} Df_{i_1 i_0}((1-\alpha)\theta+\alpha\theta^{\ast},(1-\alpha
)tp)dt\cdot\left( 0, x,(1-\alpha)y\right)  \right)  \right\vert -\varepsilon
_{u}\nonumber\\
&  =|\pi_{x}\left(  A^{\alpha}\left(0,  x,(1-\alpha)y\right)  \right)
|-\varepsilon_{u}\nonumber\\
&  >1.\nonumber
\end{align}
This proves that for any $\alpha\in\lbrack0,1]$ we have $h_{\alpha}%
(N_{\eta_{i_{0}}}^{-}(V_{j}))\cap N_{\eta_{i_{1}}}^{-}(U_{i_{1}})=\emptyset.$
Also for $\alpha=1$  since $A(x)=A^{1}(0,x,0)=\pi_{x}%
(h_{1}(\theta,x,0))$ from (\ref{eq:tmp2}) we have $A\left(  \partial B_{u}(0,1)\right)
\cap\overline{B_{u}}(0,1)=\emptyset$.

For $(\theta,p)\in N_{\eta_{i_{0}}}^{-}(V_{j}),$ using
(\ref{eq:zero-image-lem}) and (\ref{eq:cor-contr}) we have%
\begin{align*}
|\pi_{y}\left(  h_{\alpha}(\theta,p)\right)  | &  =\left\vert (1-\alpha
)f_{s}(\theta,0)+\pi_{y}\left(  (1-\alpha)\int_{0}^{1} Df_{i_1 i_0}(\theta,tp)dt\cdot (0, p)\right)  \right\vert \\
&  \leq\left\vert (1-\alpha)\pi_{y}\left(  \int_{0}^{1} Df_{i_1 i_0}(\theta,tp)dt\cdot (0,p) \right)  \right\vert
+\varepsilon_{s}\\
&  =(1-\alpha)|\pi_{y}(A^{0}(0,p))|+\varepsilon_{s}\\
&  <1,
\end{align*}
which means that $h_{\alpha}(N_{\eta_{i_{0}}}(V_{j}))\cap N_{\eta_{i_{1}}}%
^{+}(U_{i_{1}})=\emptyset.$ This finishes our proof.
\end{proof}

\subsection{Verification of cone conditions}

In this section we will show how to verify cone conditions using interval enclosures of derivatives of local maps.

We start with a technical lemma.

\begin{lemma}
\label{lem:A-bounds}Let $p=(p_{1},p_{2},p_{3})\in\mathbb{R}^{c}\times
\mathbb{R}^{u}\times\mathbb{R}^{s}$ and
\begin{equation}
A=\left(
\begin{array}
[c]{lll}%
A_{11} & A_{12} & A_{13}\\
A_{21} & A_{22} & A_{23}\\
A_{31} & A_{32} & A_{33}%
\end{array}
\right)  \label{eq:A-matrix}%
\end{equation}
be an $(c+u+s)\times(c+u+s)$ matrix. If $Q_h(p_1,p_2,p_2)=-\|p_1\|^2+\|p_2\|^2-\|p_3\|^2$, then%
\[
Q_{h}\left(  Ap\right)  \geq-\mathbf{a}\left\Vert p_{1}\right\Vert
^{2}+\mathbf{b}\left\Vert p_{2}\right\Vert ^{2}-\mathbf{c}\left\Vert
p_{3}\right\Vert ^{2},
\]
where%
\begin{align}
\mathbf{a} &  =\left\Vert A_{11}\right\Vert ^{2}-\left\Vert A_{21}\right\Vert
_{m}^{2}+\left\Vert A_{31}\right\Vert ^{2}+\sum_{i=1}^{3}\left\Vert
A_{i1}\right\Vert \left(  \left\Vert A_{i2}\right\Vert +\left\Vert
A_{i3}\right\Vert \right)  ,\nonumber\\
\mathbf{b} &  =-\left\Vert A_{12}\right\Vert ^{2}+\left\Vert A_{22}\right\Vert
_{m}^{2}-\left\Vert A_{32}\right\Vert ^{2}-\sum_{i=1}^{3}\left\Vert
A_{i2}\right\Vert \left(  \left\Vert A_{i1}\right\Vert +\left\Vert
A_{i3}\right\Vert \right)  ,\label{eq:coeff-a-b-c}\\
\mathbf{c} &  =\left\Vert A_{13}\right\Vert ^{2}-\left\Vert A_{23}\right\Vert
_{m}^{2}+\left\Vert A_{33}\right\Vert ^{2}+\sum_{i=1}^{3}\left\Vert
A_{i3}\right\Vert \left(  \left\Vert A_{i1}\right\Vert +\left\Vert
A_{i2}\right\Vert \right)  .\nonumber
\end{align}

\end{lemma}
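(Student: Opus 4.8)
The plan is to expand $Q_h(Ap) = -\|(Ap)_1\|^2 + \|(Ap)_2\|^2 - \|(Ap)_3\|^2$ block by block, where $(Ap)_i = A_{i1}p_1 + A_{i2}p_2 + A_{i3}p_3$, and then bound each of the three squared norms from the appropriate side. For the term $\|(Ap)_1\|^2$ entering with a minus sign I need an upper bound, for $\|(Ap)_2\|^2$ a lower bound, and for $\|(Ap)_3\|^2$ again an upper bound. Expanding $\|A_{i1}p_1 + A_{i2}p_2 + A_{i3}p_3\|^2$ yields three "diagonal" square terms $\|A_{ik}p_k\|^2$ plus three "cross" terms $2\langle A_{ij}p_j, A_{ik}p_k\rangle$ with $j\neq k$. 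The cross terms are estimated by Cauchy--Schwarz as $\le 2\|A_{ij}\|\,\|A_{ik}\|\,\|p_j\|\,\|p_k\|$, and then by the elementary inequality $2\|p_j\|\|p_k\| \le \|p_j\|^2 + \|p_k\|^2$ they are absorbed into contributions proportional to $\|p_1\|^2$, $\|p_2\|^2$, $\|p_3\|^2$. This is what generates the $\sum_{i=1}^3 \|A_{ik}\|(\|A_{il}\|+\|A_{im}\|)$ sums in the coefficients.

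Concretely I would proceed as follows. First, for the upper bounds on $\|(Ap)_1\|^2$ and $\|(Ap)_3\|^2$: use $\|A_{ik}p_k\|^2 \le \|A_{ik}\|^2\|p_k\|^2$ for the diagonal terms, giving the plain $\|A_{11}\|^2\|p_1\|^2$, $\|A_{31}\|^2\|p_1\|^2$, etc.\ contributions to $\mathbf{a}$ and $\mathbf{c}$; then handle the cross terms as above. Second, for the lower bound on $\|(Ap)_2\|^2 = \|A_{21}p_1 + A_{22}p_2 + A_{23}p_3\|^2$: here I want the leading positive term $\|A_{22}p_2\|^2 \ge \|A_{22}\|_m^2\|p_2\|^2$, using the minimum-norm quantity $\|\cdot\|_m$ defined in the Notation subsection, and I must lose the rest; writing $\|u+v\|^2 \ge \|u\|^2 - 2\|u\|\|v\| \ge \|u\|^2 - (\|u\|^2+\text{...})$-type manipulations, or more cleanly $\|A_{22}p_2 + (A_{21}p_1 + A_{23}p_3)\|^2 \ge \|A_{22}p_2\|^2 - 2\|A_{22}p_2\|(\|A_{21}p_1\|+\|A_{23}p_3\|)$, and then using $2ab \le a^2 + b^2$ to convert the mixed products into squares of $\|p_1\|, \|p_2\|, \|p_3\|$. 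The negative contributions $-\|A_{12}\|^2\|p_2\|^2 - \|A_{32}\|^2\|p_2\|^2$ in $\mathbf{b}$ come from the diagonal $\|A_{12}p_2\|^2$ and $\|A_{32}p_2\|^2$ terms sitting inside $-\|(Ap)_1\|^2$ and $-\|(Ap)_3\|^2$ respectively, i.e.\ from the "wrong-sign" blocks. After assembling all the pieces and collecting the coefficient of $\|p_1\|^2$, $\|p_2\|^2$, $\|p_3\|^2$, one should recover exactly $-\mathbf{a}$, $+\mathbf{b}$, $-\mathbf{c}$ as given in (\ref{eq:coeff-a-b-c}).

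I expect the main obstacle to be purely bookkeeping: there are nine blocks $A_{ik}$, three squared-norm expansions each producing three cross terms, and one must be careful about which blocks enter with which sign and to systematically route every mixed product $\|p_j\|\|p_k\|$ into the $\|p_j\|^2$-and-$\|p_k\|^2$ pot so that the final coefficients match the stated formulas — in particular checking that the cross-term sums attach to $\mathbf{a}$, $\mathbf{b}$, $\mathbf{c}$ symmetrically (each $\|A_{ik}\|$ paired with the sum of the other two column-norms in its row). There is no genuine analytic difficulty; the only inequalities used are Cauchy--Schwarz, $2ab\le a^2+b^2$, and the definitions $\|A\|$ (sup) and $\|A\|_m$ (inf) of operator norms. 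A clean way to organize the write-up is to state the one-line estimate $\|A_{i1}p_1+A_{i2}p_2+A_{i3}p_3\|^2 \le \big(\sum_k \|A_{ik}\|\|p_k\|\big)^2 \le \sum_k \|A_{ik}\|^2\|p_k\|^2 + \sum_{k}\|A_{ik}\|\|p_k\|\sum_{l\ne k}\|A_{il}\|\|p_l\| \le \ldots$ for the wrong-sign rows $i=1,3$, and the matching lower bound for row $i=2$, then add the three rows with signs $-,+,-$.
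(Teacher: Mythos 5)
Your proposal is correct and follows essentially the same route as the paper's proof: expand each $\|(Ap)_k\|^2$ into the nine inner products $\langle A_{ki}p_i,A_{kj}p_j\rangle$, bound the diagonal terms from above by $\|A_{ki}\|^2\|p_i\|^2$ in rows $1,3$ and from below by $\|A_{ki}\|_m^2\|p_i\|^2$ in row $2$, and absorb every cross term via $\pm 2\langle A_{ki}p_i,A_{kj}p_j\rangle \ge -\|A_{ki}\|\,\|A_{kj}\|\left(\|p_i\|^2+\|p_j\|^2\right)$.

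One caution on your treatment of row $2$: to recover the coefficients exactly as stated you must keep \emph{all three} diagonal terms of that row with the lower bound $\|A_{2k}p_k\|^2 \ge \|A_{2k}\|_m^2\|p_k\|^2$, since these are what produce the $-\|A_{21}\|_m^2$ in $\mathbf{a}$ and the $-\|A_{23}\|_m^2$ in $\mathbf{c}$. Your ``more cleanly'' variant, $\|A_{22}p_2+(A_{21}p_1+A_{23}p_3)\|^2 \ge \|A_{22}p_2\|^2 - 2\|A_{22}p_2\|\left(\|A_{21}p_1\|+\|A_{23}p_3\|\right)$, discards those two nonnegative diagonal terms and would only yield the weaker inequality with $\|A_{21}\|_m$ and $\|A_{23}\|_m$ replaced by $0$. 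The full inner-product expansion described in your first paragraph is the one that matches (\ref{eq:coeff-a-b-c}) and is exactly what the paper does.
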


\begin{proof}
Using the fact that%
\[
\pm2\left\langle A_{ki}p_{i},A_{kj}p_{j}\right\rangle
\geq-||A_{ki} || \cdot ||A_{kj}||\left(
||p_{i}||^{2}+||p_{j}||^{2}\right)  ,
\]
we can compute%
\[
Q(Ap)=
\]%
\begin{align*}
&  =-||\sum_{i=1}^{3}A_{1i}p_{i}||^{2}+||\sum_{i=1}^{3}A_{2i}p_{i}%
||^{2}-||\sum_{i=1}^{3}A_{3i}p_{i}||^{2}\\
&  =-\sum_{i,j=1}^{3}\left\langle A_{1i}p_{i},A_{1j}p_{j}\right\rangle
+\sum_{i,j=1}^{3}\left\langle A_{2i}p_{i},A_{2j}p_{j}\right\rangle
-\sum_{i,j=1}^{3}\left\langle A_{3i}p_{i},A_{3j}p_{j}\right\rangle \\
&  =-\sum_{i=1}^{3}||A_{1i}p_{i}||^{2}+\sum_{i=1}^{3}||A_{2i}p_{i}||^{2}%
-\sum_{i=1}^{3}||A_{3i}p_{i}||^{2}-2\sum_{k=1}^{3}\sum_{i<j}\pm
\left\langle
A_{ki}p_{i},A_{kj}p_{j}\right\rangle \\
&  \geq\left\Vert p_{1}\right\Vert ^{2}(-||A_{11}||^{2}+||A_{21}||_{m}%
^{2}-||A_{31}||^{2})\\
&  \quad+\left\Vert p_{2}\right\Vert ^{2}(-||A_{12}||^{2}+||A_{22}||_{m}%
^{2}-||A_{32}||^{2})\\
&  \quad+\left\Vert p_{3}\right\Vert ^{2}(-||A_{13}||^{2}+||A_{23}||_{m}%
^{2}-||A_{33}||^{2})\\
&  \quad-\sum_{k=1}^{3}\sum_{i<j}||A_{ki}||||A_{kj}||\left(  ||p_{i}%
||^{2}+||p_{j}||^{2}\right)  \\
&  =-\mathbf{a}\left\Vert p_{1}\right\Vert ^{2}+\mathbf{b}\left\Vert
p_{2}\right\Vert ^{2}-\mathbf{c}\left\Vert p_{3}\right\Vert ^{2}.
\end{align*}

\end{proof}

The theorem below gives conditions which imply cone conditions.

\begin{theorem} \label{th:ver-conec}
\label{th:cone-ineq}Assume that $(D,\{\eta_{i},U_{i},Q_{i,h},Q_{i,v}\}_{i \in
I},\{V_{j}\}_{j \in J})$ is a ch-set with cones, with convex $\eta_{i}(V_{j})$
for all $V_{j}\subset U_{i}$. Assume that the maps used for $Q_{i,h},Q_{i,v}$
(see (\ref{eq:cone-h}, \ref{eq:cone-v})) are
\begin{equation}
\alpha(x)=\| x\|^{2}, \quad\beta(y)= \| y\|^{2}, \quad\gamma(\theta) = \|
\theta\|^{2}.
\end{equation}
Assume that for any $x\in D$ and any $(V_{j_{0}},U_{i_{0}})$ which is a cone enclosing
pair for $x$, if $f(x)\in D$ then there exist $i_{1}\in I$, $j_{1}\in J$ such
that $(V_{j_{1}}, U_{i_{1}})$ is a cone enclosing pair for $f(x)$ and $f(p^{-1}%
(V_{j_{0}})\cap| D| ) \subset p^{-1}(U_{i_{1}})$. What is more for any
\begin{align*}
A &  \in\lbrack df_{i_{1} i_{0}}( N_{\eta_{i_{0}}} (V_{j})) ],\\
A &  =\left(  A_{ik}\right)  _{i,k=1,...,3},
\end{align*}
we assume that we have%
\begin{equation}%
\begin{array}
[c]{lll}%
||A_{11}||\leq C, & ||A_{12}||\leq\varepsilon_{c}, & ||A_{13}||\leq
\varepsilon_{c},\\
\mu\leq||A_{21}||_{m}\leq||A_{21}||\leq M, & \alpha\leq||A_{22}||_{m}%
\leq||A_{22}||\leq\mathcal{A}, & ||A_{23}||\leq\varepsilon_{u},\\
||A_{31}||\leq M, & ||A_{32}||\leq\varepsilon_{s}, & ||A_{33}||\leq\beta
\end{array}
\label{eq:A-bounds}%
\end{equation}
(The $C,M,\mu,\mathcal{A},\alpha,\beta, \varepsilon_{u}, \varepsilon_{s} $ and
$\varepsilon_{c} $ can depend on $j_{0}$, $i_{0}$, $j_{1}$ and $i_{1}$). If
there exists an $m>1$ such that%
\begin{align}
C^{2}-\mu^{2}+M^{2}+2C\varepsilon_{c}+M\left(  \mathcal{A}+\varepsilon
_{u}+\varepsilon_{s}+\beta\right)   &  <m,\nonumber\\
-\varepsilon_{c}^{2}+\alpha^{2}-\varepsilon_{s}^{2}-\varepsilon_{c}\left(
C+\varepsilon_{c}\right)  -\mathcal{A}\left(  M+\varepsilon_{u}\right)
-\varepsilon_{s}\left(  M+\beta\right)   &  >m,\label{eq:cone-est1}\\
\varepsilon_{c}^{2}+\beta^{2}+\varepsilon_{c}\left(  C+\varepsilon_{c}\right)
+\varepsilon_{u}\left(  M+\mathcal{A}\right)  +\beta\left(  M+\varepsilon
_{s}\right)   &  <m,\nonumber
\end{align}
then $f$ satisfies cone conditions.
\end{theorem}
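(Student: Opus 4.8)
The plan is to verify the cone-inequality~(\ref{eq:cone-cond-Q}) of Definition~\ref{def:f-cone-cond} directly, by combining the mean value theorem with the matrix estimate of Lemma~\ref{lem:A-bounds}; the remaining clauses of that definition (the covering relation $D\overset{f}{\Longrightarrow}D$, the existence of a cone enclosing pair $(V_{j_1},U_{i_1})$ for $f(x)$, and the inclusion $f(p^{-1}(V_{j_0})\cap|D|)\subset p^{-1}(U_{i_1})$) are already supplied by the hypotheses, so only~(\ref{eq:cone-cond-Q}) is at stake. First I would fix $x\in D$, a cone enclosing pair $(V_{j_0},U_{i_0})$ for $x$ with $f(x)\in D$, and the pair $(V_{j_1},U_{i_1})$ provided by the hypotheses. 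The key structural remark is that the domain $N_{\eta_{i_0}}(V_{j_0})=\eta_{i_0}(V_{j_0})\times\overline{B}_u\times\overline{B}_s$ is convex — this is precisely where the hypothesis that each $\eta_i(V_j)$ is convex is used. Consequently, for any $x_1,x_2\in N_{\eta_{i_0}}(V_{j_0})$ the segment $[x_2,x_1]$ stays inside $N_{\eta_{i_0}}(V_{j_0})$, so the fundamental theorem of calculus gives
\[
f_{i_1i_0}(x_1)-f_{i_1i_0}(x_2)=A\,(x_1-x_2),\qquad A:=\int_0^1 df_{i_1i_0}\bigl(x_2+t(x_1-x_2)\bigr)\,dt,
\]
and, since the interval enclosure $[df_{i_1i_0}(N_{\eta_{i_0}}(V_{j_0}))]$ is a product of intervals, hence convex, while each integrand lies in it, the average $A$ again belongs to $[df_{i_1i_0}(N_{\eta_{i_0}}(V_{j_0}))]$. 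Thus the block bounds~(\ref{eq:A-bounds}) apply to $A$.

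Next I would write $p=x_1-x_2=(p_1,p_2,p_3)\in\mathbb{R}^c\times\mathbb{R}^u\times\mathbb{R}^s$ and apply Lemma~\ref{lem:A-bounds} (legitimately, since the forms are the standard $Q_{i,h}(\theta,x,y)=\|x\|^2-\|y\|^2-\|\theta\|^2$), obtaining
\[
Q_{i_1,h}(Ap)\ \geq\ -\mathbf{a}\,\|p_1\|^2+\mathbf{b}\,\|p_2\|^2-\mathbf{c}\,\|p_3\|^2,
\]
with $\mathbf{a},\mathbf{b},\mathbf{c}$ given by~(\ref{eq:coeff-a-b-c}) in terms of the block norms of $A$. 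I would then substitute~(\ref{eq:A-bounds}) into these formulas, estimating $\mathbf{a}$ and $\mathbf{c}$ from above and $\mathbf{b}$ from below: keeping the favourable terms $-\|A_{21}\|_m^2\le-\mu^2$ in $\mathbf{a}$ and $\|A_{22}\|_m^2\ge\alpha^2$ in $\mathbf{b}$, and simply discarding the nonpositive term $-\|A_{23}\|_m^2$ in $\mathbf{c}$, one checks that the resulting bounds are exactly the three left-hand sides of~(\ref{eq:cone-est1}). Hence the assumptions give $\mathbf{a}<m$, $\mathbf{b}>m$ and $\mathbf{c}<m$.

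Finally, subtracting $m\,Q_{i_0,h}(p)=m(-\|p_1\|^2+\|p_2\|^2-\|p_3\|^2)$ from the lower bound above yields
\[
Q_{i_1,h}(Ap)-m\,Q_{i_0,h}(p)\ \geq\ (m-\mathbf{a})\|p_1\|^2+(\mathbf{b}-m)\|p_2\|^2+(m-\mathbf{c})\|p_3\|^2,
\]
a sum of nonnegative terms with strictly positive coefficients, hence strictly positive whenever $p\neq0$ — in particular whenever $Q_{i_0,h}(x_1-x_2)\geq0$ and $x_1\neq x_2$, which is precisely~(\ref{eq:cone-cond-Q}); so $f$ satisfies the cone conditions. (Note that the argument does not even use $Q_{i_0,h}(p)\ge0$.) I expect the only delicate point to be the bookkeeping of the middle paragraph: correctly matching the nine block-norm estimates and the three $\|\cdot\|_m^2$ contributions, through Lemma~\ref{lem:A-bounds}, to the three scalar inequalities~(\ref{eq:cone-est1}), and keeping straight which of those terms should be retained and which discarded. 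Everything else is a routine application of the mean value theorem and a sign count.
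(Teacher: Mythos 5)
Your proposal is correct and follows essentially the same route as the paper's proof: the mean value theorem on the convex set $N_{\eta_{i_0}}(V_{j_0})$ puts the averaged Jacobian $A$ into the interval enclosure, Lemma~\ref{lem:A-bounds} gives the quadratic lower bound, and substituting the block estimates (\ref{eq:A-bounds}) into (\ref{eq:coeff-a-b-c}) yields $\mathbf{a}<m$, $\mathbf{b}>m$, $\mathbf{c}<m$, whence (\ref{eq:cone-cond-Q}). Your final sign count is in fact spelled out more explicitly than in the paper, and your remark that the hypothesis $Q_{i_0,h}(x_1-x_2)\geq 0$ is not needed is accurate.
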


\begin{proof}
We only need to prove condition (\ref{eq:cone-cond-Q}). Take any $x_{1}, x_{2}
\in N_{\eta_{i_{0}}} (V_{j})$ for which $Q_{i_{0},h}(x_{1}-x_{2})\geq 0$. We have%
\[
f_{i_{1} i_{0}}(x_{1})-f_{i_{1} i_{0}}(x_{2})=\int_{0}^{1}df_{i_{1} i_{0}%
}(x_{2}+t(x_{1}-x_{2}))dt\cdot(x_{1}-x_{2}).
\]
Let us define a matrix $A=\int_{0}^{1}df_{i_{1} i_{0}}(x_{2}+t(x_{1}%
-x_{2}))dt\cdot(x_{1}-x_{2}).$ Clearly $A\in\lbrack df_{i_{1} i_{0}}(
N_{\eta_{i_{0}}} (V_{j}))].$ By Lemma \ref{lem:A-bounds} we have%
\begin{align}
Q_{i_1,h}(f_{i_{1} i_{0}}(x_{1})-f_{i_{1} i_{0}}(x_{2})) = &  Q_{i_1,h}(A(x_{1}%
-x_{2}))\label{eq:temp6}\\
\geq & -\mathbf{a}||\pi_{1}(x_{1}-x_{2})||^{2}+\mathbf{b}||\pi_{2}(x_{1}%
-x_{2})||^{2}\nonumber\\
& -\mathbf{c}|\pi_{3}(x_{1}-x_{2})||^{2},\nonumber
\end{align}
with $\mathbf{a},\mathbf{b}$ and $\mathbf{c}$ defined as in
(\ref{eq:coeff-a-b-c}). By using the estimates (\ref{eq:A-bounds}) in the
formulas (\ref{eq:coeff-a-b-c}) for the coefficients $\mathbf{a},\mathbf{b}$
and $\mathbf{c}$, and applying (\ref{eq:cone-est1}), from (\ref{eq:temp6}) we
obtain%
\[
Q_{i_1,h}(f_{i_{1} i_{0}}(x_{1})-f_{i_{1} i_{0}}(x_{2}))>mQ_{i_0,h}(x_{1}-x_{2}).
\]
This finishes our proof.
\end{proof}

\begin{remark}
\label{rem:rescaling}It might turn out that the conditions (\ref{eq:cone-est1}%
) will not hold due to the fact that the bounds%
\begin{align*}
\left\|\frac{d(\pi_{x}f_{i_{1}i_{0}})}{d\theta}(N_{\eta_{i_{0}}}(V_{j}))\right\|
&  \leq
M,\\
\left\|\frac{d(\pi_{y}f_{i_{1}i_{0}})}{d\theta}(N_{\eta_{i_{0}}}(V_{j}))\right\|
&  \leq M,
\end{align*}
will produce a large coefficient $M.$ In such case, a change of local
coordinates%
\[
(\theta,x,y)\rightarrow(v\theta,x,y),
\]
will result in conditions
\begin{align}
C^{2}-\left(  \frac{\mu}{v}\right)  ^{2}+\left(  \frac{M}{v}\right)
^{2}+2Cv\varepsilon_{c}+\frac{M}{v}\left(  \mathcal{A}+\varepsilon
_{u}+\varepsilon_{s}+\beta\right)   &  <m,\nonumber\\
-\left(  v\varepsilon_{c}\right)  ^{2}+\alpha^{2}-\varepsilon_{s}%
^{2}-v\varepsilon_{c}\left(  C+v\varepsilon_{c}\right)  -\mathcal{A}\left(
\frac{M}{v}+\varepsilon_{u}\right)  -\varepsilon_{s}\left(  \frac{M}{v}%
+\beta\right)   &  >m,\label{eq:cone-est2}\\
\left(  v\varepsilon_{c}\right)  ^{2}+\beta^{2}+v\varepsilon_{c}\left(
C+v\varepsilon_{c}\right)  +\varepsilon_{u}\left(  \frac{M}{v}+\mathcal{A}%
\right)  +\beta\left(  \frac{M}{v}+\varepsilon_{s}\right)   &  <m,\nonumber\\
1  &  <m.\nonumber
\end{align}
The conditions (\ref{eq:cone-est2}) with appropriately large $v$
will hold more readily than (\ref{eq:cone-est1}), since in
practice we usually have the bound $\varepsilon_{c}$ small in
comparison with $M.$
\end{remark}

\begin{remark} \label{rem:imply-cone}
Let us note that if our choice of the local coordinates in the stable and
unstable direction results in having $\varepsilon_{u}=\varepsilon
_{s}=\varepsilon_{c}=0$, then the condition%
\[
\beta\leq C<\alpha,
\]
implies conditions (\ref{eq:cone-est2}) for any $m\in(\max\{C^{2},1\},\alpha^{2})$ and
sufficiently large $v.$
\end{remark}

%TCIDATA{OutputFilter=latex2.dll}
%TCIDATA{Version=5.00.0.2606}
%TCIDATA{LaTeXparent=0,0,online-edit.tex}

\section{Comparison of the results with the classical normally hyperbolic invariant manifold theorem} \label{sec:rel-classic}

In the classical version of the normally hyperbolic invariant
manifold theorem (see \cite{HPS}) we have the following setting.
We have a smooth Riemann manifold $M$ and a diffeomorphism $f:M
\to M$ with an invariant submanifold $\Lambda \subset M$
\[ f(\Lambda)=\Lambda.\]
We say that $f$ is \emph{$r$-normally hyperbolic} at $\Lambda$ if the tangent bundle of $M$ splits into invariant
by the tangent of $f$ subbundles
\[ T_V M = \xi^u \oplus T \Lambda \oplus \xi^s, \]
such that
\begin{enumerate}
\item $Tf$ expands $\xi^u$ more sharply than $Tf^r$ expands $T \Lambda$,
\item $Tf$ contracts $\xi^s$ more sharply than $Tf^r$ contracts $T\Lambda$.
\end{enumerate}
\begin{theorem}\cite{HPS} \label{th:HPS}
Let $f$ be $r$-normally hyperbolic at $\Lambda$. Through $\Lambda$ pass stable and unstable manifolds
 invariant by $f$ and tangent at $\Lambda$ to $T\Lambda \oplus \xi^s$, $\xi^u \oplus T\Lambda$.
 They are of class $C^r$. The stable manifold is invariantly fibred  by $C^r$ submanifolds tangent
 at $\Lambda$ to the subspaces $\xi^s$. Similarly for the unstable manifold and $\xi^u$. These structures
 are unique, and permanent under small perturbations of $f$.
\end{theorem}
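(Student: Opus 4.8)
The plan is to follow the classical Hadamard--Perron graph-transform method, upgraded by a fiber-contraction argument for the $C^r$ regularity, and then to read off uniqueness and permanence from the uniqueness and the parameter-dependence of the relevant fixed points. First I would reduce to a global normal form: choose a Riemann metric on $M$ adapted to $TM=\xi^u\oplus T\Lambda\oplus\xi^s$ along $\Lambda$ and use its exponential map to identify a neighborhood of $\Lambda$ in $M$ with a neighborhood of the zero section of the normal bundle $\nu:=\xi^u\oplus\xi^s\to\Lambda$. In these coordinates $f$ becomes a map $F$ defined near the zero section which maps the zero section to itself (since $f(\Lambda)=\Lambda$) and whose fiber derivative along $\Lambda$ is, after a further change of adapted metric, block triangular with diagonal blocks $A_\lambda:=Tf|_{\xi^u_\lambda}$, $Tf|_{T_\lambda\Lambda}$, $B_\lambda:=Tf|_{\xi^s_\lambda}$. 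The $r$-normal hyperbolicity hypothesis amounts, on the compact $\Lambda$, to the existence of $C\ge 1$ and $0<\mu<1\le\lambda$ with $\mu\lambda^r<1$ so that $\|Tf^n|_{\xi^s}\|\le C\mu^n$, $\|Tf^{-n}|_{\xi^u}\|\le C\mu^n$ and $\|Tf^{\pm n}|_{T\Lambda}\|\le C\lambda^n$. Multiplying the fiber component of $F$ by a bump function equal to $1$ on a small disk bundle $N$ and $0$ outside a slightly larger one, we obtain a map (still called $F$) globally defined on $\nu$, agreeing with $f$ on $N$, for which these rate inequalities hold uniformly together with a uniform bound on the fiberwise Lipschitz constant of $DF$.

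\emph{Lipschitz $W^u$ and $W^s$.} Work in the complete metric space $\Gamma^u$ of continuous sections $h$ assigning to $(\lambda,x)\in\Lambda\times\overline B^u$ a point $h(\lambda,x)\in\xi^s_\lambda$ with $\|h\|\le\rho$ and fiber-Lipschitz constant $\le 1$ (a cone condition). Because $A_\lambda$ is expanding and the remaining entries of $DF$ are small, the map $(\lambda,x)\mapsto\pi_{(\lambda,x)}F(\lambda,x,h(\lambda,x))$ is a homeomorphism of $\Lambda\times\overline B^u$ onto a set containing $\Lambda\times\overline B^u$ (using $f(\Lambda)=\Lambda$), so $F(\mathrm{graph}\,h)\cap N$ is again the graph of a unique section $\mathcal G^u(h)\in\Gamma^u$, and the rate conditions make $\mathcal G^u$ a $C^0$-contraction. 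Its fixed point $h^u_*$ defines $W^u:=\mathrm{graph}\,h^u_*$, which is Lipschitz, $F$-invariant, and coincides with the set of points whose backward $F$-orbit stays in $N$; inside $N$, where $F=f$, this is exactly the set of points of $M$ whose backward $f$-orbit stays near and converges to $\Lambda$. Running the same argument with $F^{-1}$ (which interchanges the roles of $\xi^u$ and $\xi^s$) produces the Lipschitz $W^s=\mathrm{graph}\,h^s_*$, a graph over $\Lambda\times\overline B^s$, and a standard contraction in $(x,y)$ for fixed $\lambda$ shows $W^u\cap W^s=\Lambda$.

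\emph{$C^r$ regularity and the invariant fibrations — the main obstacle.} Since the space of $C^1$ graphs is not complete in the $C^1$ metric, one differentiates the graph transform formally to get a ``tangent transform'' on candidate derivatives $Dh$, and observes that the pair $(h,Dh)$ obeys a fiber-contracted system: the base map on $h$ is the contraction above, and over each $h$ the induced map on $Dh$ is a contraction with rate controlled by the $k=1$ instance $\mu\lambda<1$ of the rate inequalities. The fiber-contraction theorem then yields a unique fixed point with continuous $Dh^u_*$, and the usual uniform-$C^1$-bound argument promotes this to genuine differentiability of $h^u_*$; since the derivative of $F$ respects the splitting along $\Lambda$, $W^u$ is then $C^1$ and tangent there to $\xi^u\oplus T\Lambda$. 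Iterating on the $k$-jets, using $\mu\lambda^k<1$ for $k\le r$, gives $W^u\in C^r$, and symmetrically $W^s\in C^r$. For the fibration of $W^s$: restricting $F$ to the $C^r$ manifold $W^s$, the set $\Lambda$ is normally attracting with $\xi^s$ contracting faster than $T\Lambda$, and a version of the graph-transform/fiber-contraction argument parametrized over $p\in\Lambda$ produces, for each $p$, a $C^r$ disk $W^{ss}(p)\subset W^s$ tangent at $p$ to $\xi^s_p$ and consisting of the points of $W^s$ whose forward orbit converges to that of $p$ at the strong rate; these disks partition $W^s$, satisfy $F(W^{ss}(p))=W^{ss}(F(p))$, and depend continuously on $p$. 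The unstable fibration of $W^u$ by $C^r$ disks tangent to $\xi^u$ is obtained symmetrically with $F^{-1}$. I expect the delicate points here to be completeness/convergence in the higher-jet spaces and the continuity (rather than smoothness) of the fibration transversally.

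\emph{Uniqueness and permanence.} Any invariant submanifold with the stated tangency that stays in $N$ necessarily satisfies the cone condition defining $\Gamma^u$ (resp.\ its stable analog), hence is a fixed point of $\mathcal G^u$ (resp.\ of the stable transform) and therefore equals $W^u$ (resp.\ $W^s$); the same applies to the fibers, giving uniqueness. For permanence, let $g$ be $C^1$-close to $f$. A graph-transform fixed point over the base, combining the expansion of $g$ along $\xi^u$ (handled via $g^{-1}$) and the contraction along $\xi^s$ (handled via $g$), produces a unique $g$-invariant section of $N$, i.e.\ a $C^r$ submanifold $\Lambda_g$ that is $C^1$-close to $\Lambda$ — here $r$-normal hyperbolicity is again exactly what yields $C^r$. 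Since the rate inequalities are $C^1$-open, $g$ is $r$-normally hyperbolic at $\Lambda_g$, so the preceding steps applied to $(g,\Lambda_g)$ furnish $W^u_g$, $W^s_g$ and their fibrations, and continuity of the contraction fixed points in the parameter shows these are close to $W^u$, $W^s$ and their fibrations, which is the asserted stability under small perturbations.
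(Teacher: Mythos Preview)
The paper does not prove this theorem. Theorem~\ref{th:HPS} is quoted verbatim from \cite{HPS} (note the ``\verb|\cite{HPS}|'' immediately following the theorem header) and is included in Section~\ref{sec:rel-classic} solely as a benchmark against which the authors compare their own Theorem~\ref{th:main}. There is no proof of Theorem~\ref{th:HPS} in this paper, so there is nothing to compare your proposal against.

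Your sketch is a reasonable outline of the classical Hadamard--Perron/graph-transform proof as carried out in \cite{HPS} and related sources (adapted metric, tubular neighborhood, graph transform on Lipschitz sections, fiber-contraction for $C^r$ regularity, strong-stable fibration via a parametrized version, and permanence by openness of the rate conditions). If you want feedback on its correctness, the place to check carefully is the $C^r$ step: the fiber-contraction argument requires the spectral gap $\mu\lambda^r<1$ at each jet level and a completion argument to pass from ``formal $k$-jet fixed point'' to genuine $C^k$ smoothness, and the transverse regularity of the strong-stable foliation is in general only H\"older (not $C^r$), which is consistent with the theorem as stated (``invariantly fibred by $C^r$ submanifolds'' refers to the smoothness of each leaf, not of the holonomy). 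But again, none of this appears in the present paper.
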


We will now highlight how our result obtained in
Theorem \ref{th:main} (see also Theorems \ref{th:diff-cover}, \ref{th:ver-conec} and Remark \ref{rem:imply-cone})
relates to Theorem \ref{th:HPS}.

In our approach we do not need to start with a normally hyperbolic invariant manifold. We start with a region
(ch-set) $D$ in which we suspect the manifold to be contained. Theorem \ref{th:main} gives us both
the existence an invariant manifold, and of the stable and unstable manifolds inside of $D$.

Using Theorem \ref{th:HPS} it is not straightforward to obtain an
estimate on the size of the perturbation under which the
structures persist. This is due to the fact that the proof is
conducted by the use of the implicit function theorem on infinite
dimensional functional spaces. In contrast, our proof is designed
so that this estimate is explicitly given (see Theorems
\ref{th:diff-cover}, \ref{th:ver-conec}, and Section
\ref{sec:Henon} for an example how this is done in the case of the
rotating H\'{e}non map). This was possible due to the fact that
the proof relies only on topological arguments, and is conducted
in the phase space of the system.

Theorem \ref{th:HPS} is in many ways stronger than our result. Due to the fact that our proof relies
 only on topological arguments we have lost the $C^r$ regularity results and our manifolds are only $C^0$.
  As of yet the method also does not give us the fibration of the stable and unstable manifolds. This issue
   will be addressed in forthcoming work. We also point out that in the method of verification of cone conditions
   given by Theorem \ref{th:ver-conec} (see also Remark \ref{rem:imply-cone}) we assume that we have
    strong expansion properties for the first iterate of the map. This is not required for classical normal
    hyperbolicity, where it is enough that one has strong expansion and contraction properties for some higher
    iteration of the map (for more details see \cite{HPS}). This means that we can apply our results in the
    classical setting, but to do so we need to consider a higher iterate of the map, and take the higher
    iterate to be the function considered in Theorem \ref{th:main}.

	%TCIDATA{OutputFilter=latex2.dll}
	%TCIDATA{Version=5.00.0.2606}
	%TCIDATA{LaTeXparent=0,0,online-edit.tex}
	\section{Rotating H\'{e}non map} \label{sec:Henon}

	In this Section we will apply Theorem \ref{th:main} to obtain an invariant
	manifold for the rotating H\'{e}non map. We will obtain an explicit estimate of the region in which
	the manifold is contained. The size of this region depends on the size of the perturbation. The smaller
	the perturbation is the more exact our estimate.

	\subsection{Statement of the problem.}

	We will consider the rotating H\'{e}non map%
	\begin{align}
	\bar{\theta}  &  =\theta+\omega\quad\text{(mod }1\text{),}\nonumber\\
	\bar{x}  &  =1+y-ax^{2}+\varepsilon\cos(2\pi\theta), \label{eq:rotating-henon}%
	\\
	\bar{y}  &  =bx.\nonumber
	\end{align}
	The dynamics of (\ref{eq:rotating-henon}) with $a=0.68$ and
	$b=0.1$ has been investigated by Haro and de la Llave in
	\cite{HL}, for a demonstration of a numerical algorithm for
	finding invariant manifolds and their whiskers in quasi
	periodically forced systems.

	In this section we will prove that for the the same parameters $a$ and $b$ for
	all $\varepsilon\leq\frac{1}{2},$ there exists an invariant $C^{0}$ manifold
	of (\ref{eq:rotating-henon}) which is homeomorphic to $\mathbb{T}^{1}$ and is
	contained in a set%
	\begin{equation}
	U_{\varepsilon}=\mathbb{T}^{1}\times\lbrack x_{-}-1.1\varepsilon
	,x_{-}+1.1\varepsilon]\times\lbrack y_{-}-0.12\varepsilon,y_{-}%
	+0.12\varepsilon], \label{eq:U-epsilon}%
	\end{equation}
	where $(x_{-},y_{-})$ is a fixed point for the (standard) H\'{e}non map,%
	\begin{align*}
	x_{-}  &  =\frac{-(1-b)-\sqrt{(1-b)^{2}+4a}}{2a}\approx-2.043\,3,\\
	y_{-}  &  =bx_{-}\approx-0.204\,33.
	\end{align*}

	\subsection{The unperturbed map.}

	We start by investigating the case of $\varepsilon=0.$ We will ignore the
	coordinate $\theta$ and concentrate on a map%
	\[
	F(x,y)=(1+y-ax^{2},bx).
	\]
	The point $(x_{-},y_{-})$ is one of the two fixed points $(x_{\pm},y_{\pm})$
	of the map $F$%
	\[
	x_{\pm}=\frac{-(1-b)\pm\sqrt{(1-b)^{2}+4a}}{2a},\quad y_{\pm}=bx_{\pm}.
	\]
	We have%
	\[
	DF(x,y)=\left(
	\begin{array}
	[c]{cc}%
	-2ax & 1\\
	b & 0
	\end{array}
	\right)  ,
	\]
	with two eigenvalues $\lambda_{1}=-ax+\sqrt{b+a^{2}x^{2}},$ $\lambda
	_{2}=-ax-\sqrt{b+a^{2}x^{2}}.$ For $(x_{-},y_{-})$ the eigenvalues are%
	\[
	\lambda_{1}\approx2.\,\allowbreak814\,4,\quad\lambda_{2}\approx
	-3.\,\allowbreak553\,1\times10^{-2}.
	\]
	We will consider the following Jordan forms of the matrix $DF(x_{-},y_{-})$%
	\begin{gather*}
	DF(x_{-},y_{-})=\Phi_{\varepsilon}^{-1}J\Phi_{\varepsilon},\\
	\Phi_{\varepsilon}^{-1}=\varepsilon\kappa\left(
	\begin{array}
	[c]{cc}%
	\tau & \eta\\
	-\tau\lambda_{2} & -\lambda_{1}\eta
	\end{array}
	\right)  ,\quad J=\left(
	\begin{array}
	[c]{cc}%
	\lambda_{1} & 0\\
	0 & \lambda_{2}%
	\end{array}
	\right)  ,\quad\Phi_{\varepsilon}=\frac{1}{\varepsilon}\left(
	\begin{array}
	[c]{cc}%
	-\frac{1}{\tau}\lambda_{1} & -\frac{1}{\tau}\\
	\frac{1}{\eta}\lambda_{2} & \frac{1}{\eta}%
	\end{array}
	\right)  ,
	\end{gather*}
	where $\kappa=1/(\lambda_{2}-\lambda_{1}).$ The constants $\tau,\eta$ serve
	the purpose of an appropriate rescaling of the stable and unstable directions
	in the local coordinates, and will be chosen later on. When we will consider
	the perturbed H\'{e}non map in Section \ref{sec:cover-ver}, for a given
	$\varepsilon>0$ we will use the maps $\Phi_{\varepsilon}^{-1}$ and
	$\Phi_{\varepsilon}.$

	We introduce local coordinates of hyperbolic expansion and
	contraction around the point $(x_{-},y_{-})$ as
	\begin{equation}
	\left(  \tilde{x},\tilde{y}\right)  =\Phi_{\varepsilon}\left(  x-x_{-}%
	,y-y_{-}\right)  . \label{eq:local-coord-henon}%
	\end{equation}
	The map $F$ in the local coordinates is%
	\[
	\tilde{F}(\tilde{x},\tilde{y})=\Phi_{\varepsilon}\left(  F\left(
	\Phi_{\varepsilon}^{-1}(\tilde{x},\tilde{y})+\left(  x_{-},y_{-}\right)
	\right)  -\left(  x_{-},y_{-}\right)  \right)  ,
	\]
	and its derivative $d\tilde{F}$ is equal to%
	\begin{align}
	d\tilde{F}(\tilde{x},\tilde{y})  &  =\Phi_{\varepsilon}\circ dF(\Phi
	_{\varepsilon}^{-1}(\tilde{x},\tilde{y})+\left(  x_{-},y_{-}\right)
	)\circ\Phi_{\varepsilon}^{-1}\nonumber\\
	&  =\Phi_{\varepsilon}\circ dF\left(
	\begin{array}
	[c]{c}%
	\varepsilon\kappa\left(  \tau\tilde{x}+\eta\tilde{y}\right)  +x_{-}\\
	-\varepsilon\kappa\left(  \tau\lambda_{2}\tilde{x}+\eta\lambda_{1}\tilde
	{y}\right)  +y_{-}%
	\end{array}
	\right)  \circ\Phi_{\varepsilon}^{-1}\nonumber\\
	&  =\Phi_{\varepsilon}\circ\left(
	\begin{array}
	[c]{cc}%
	-2a\left(  \varepsilon\kappa\left(  \tau\tilde{x}+\eta\tilde{y}\right)
	+x_{-}\right)  & 1\\
	b & 0
	\end{array}
	\right)  \circ\Phi_{\varepsilon}^{-1}\nonumber\\
	&  =\Phi_{\varepsilon}\circ\left(  \left(
	\begin{array}
	[c]{cc}%
	-2ax_{-} & 1\\
	b & 0
	\end{array}
	\right)  +\left(
	\begin{array}
	[c]{cc}%
	-2a\varepsilon\kappa\left(  \tau\tilde{x}+\eta\tilde{y}\right)  & 0\\
	0 & 0
	\end{array}
	\right)  \right)  \circ\Phi_{\varepsilon}^{-1}\nonumber\\
	&  =J+R_{\varepsilon}, \label{eq:dF-forw}%
	\end{align}
	where%
	\[
	R_{\varepsilon}=-2a\varepsilon\kappa^{2}\left(  \tau\tilde{x}+\eta\tilde
	{y}\right)  \left(
	\begin{array}
	[c]{cc}%
	-\lambda_{1} & -\frac{\eta}{\tau}\lambda_{1}\\
	\frac{\tau}{\eta}\lambda_{2} & \lambda_{2}%
	\end{array}
	\right)  .
	\]
	For any $\tilde{x},\tilde{y}\in\lbrack-1,1]$ we have the following estimates,
	which will be used later on for the verification of the covering and cone
	conditions
	\begin{align}
	&  [d\tilde{F}(B(0,1)\times B(0,1))]\label{eq:der-encl-forw-n}\\
	&  \subset\left(
	\begin{array}
	[c]{cc}%
	\lambda_{1} & 0\\
	0 & \lambda_{2}%
	\end{array}
	\right)  +\varepsilon\left(  \tau+\eta\right)  \left(
	\begin{array}
	[c]{cc}%
	\lbrack-\frac{1}{2},\frac{1}{2}] & [-\frac{1}{2}\frac{\eta}{\tau},\frac{1}%
	{2}\frac{\eta}{\tau}]\\
	\lbrack-\frac{6}{1000}\frac{\tau}{\eta},\frac{6}{1000}\frac{\tau}{\eta}] &
	[-\frac{6}{1000},\frac{6}{1000}]
	\end{array}
	\right)  .\nonumber
	\end{align}

	Now we turn to the inverse map. The inverse map to $F$ is%

	\[
	F^{-1}(x,y)=\left(  \frac{1}{b}y,-1+x+\frac{a}{b^{2}}y^{2}\right)  ,
	\]
	and has a derivative%
	\[
	dF^{-1}(x,y)=\left(
	\begin{array}
	[c]{cc}%
	0 & \frac{1}{b}\\
	1 & \frac{2a}{b^{2}}y
	\end{array}
	\right)  .
	\]
	In the local coordinates (\ref{eq:local-coord-henon}) the inverse map is%

	\[
	\tilde{F}^{-1}(\tilde{x},\tilde{y})=\Phi_{\varepsilon}\left(  F^{-1}\left(
	\Phi_{\varepsilon}^{-1}(\tilde{x},\tilde{y})+\left(  x_{-},y_{-}\right)
	\right)  -\left(  x_{-},y_{-}\right)  \right)  ,
	\]
	and its derivative $d\tilde{F}^{-1}$ is equal to%
	\begin{align}
	d\tilde{F}^{-1}(\tilde{x},\tilde{y})  &  =\Phi_{\varepsilon}\circ dF^{-1}%
	(\Phi_{\varepsilon}^{-1}(\tilde{x},\tilde{y})+\left(  x_{-},y_{-}\right)
	)\circ\Phi_{\varepsilon}^{-1}\nonumber\\
	&  =\Phi_{\varepsilon}\circ dF^{-1}\left(
	\begin{array}
	[c]{c}%
	\varepsilon\kappa\left(  \tau\tilde{x}+\eta\tilde{y}\right)  +x_{-}\\
	-\varepsilon\kappa\left(  \tau\lambda_{2}\tilde{x}+\eta\lambda_{1}\tilde
	{y}\right)  +y_{-}%
	\end{array}
	\right)  \circ\Phi_{\varepsilon}^{-1}\nonumber\\
	&  =\Phi_{\varepsilon}\circ\left(
	\begin{array}
	[c]{cc}%
	0 & \frac{1}{b}\\
	1 & \frac{2a}{b^{2}}\left(  -\varepsilon\kappa\left(  \tau\lambda_{2}\tilde
	{x}+\eta\lambda_{1}\tilde{y}\right)  +y_{-}\right)
	\end{array}
	\right)  \circ\Phi_{\varepsilon}^{-1}\nonumber\\
	&  =\Phi_{\varepsilon}\circ\left(  \left(
	\begin{array}
	[c]{cc}%
	0 & \frac{1}{b}\\
	1 & \frac{2a}{b^{2}}y_{-}%
	\end{array}
	\right)  +\left(
	\begin{array}
	[c]{cc}%
	0 & 0\\
	0 & -\frac{2a}{b^{2}}\varepsilon\kappa\left(  \tau\lambda_{2}\tilde{x}%
	+\eta\lambda_{1}\tilde{y}\right)
	\end{array}
	\right)  \right)  \circ\Phi_{\varepsilon}^{-1}\nonumber\\
	&  =J^{-1}+R_{\varepsilon}^{\prime}, \label{eq:dF-inv}%
	\end{align}
	where%
	\[
	R_{\varepsilon}^{\prime}=\frac{2a}{b^{2}}\varepsilon\kappa^{2}\left(
	\tau\lambda_{2}\tilde{x}+\eta\lambda_{1}\tilde{y}\right)  \left(
	\begin{array}
	[c]{cc}%
	-\lambda_{2} & -\frac{\eta}{\tau}\lambda_{1}\\
	\frac{\tau}{\eta}\lambda_{2} & \lambda_{1}%
	\end{array}
	\right)  .
	\]
	For $\tilde{x},\tilde{y}\in\lbrack-1,1]$ this gives us the following
	estimates
	\begin{align}
	&  [d\tilde{F}^{-1}(B(0,1)\times B(0,1))]\label{eq:der-encl-back}\\
	&  \subset\left(
	\begin{array}
	[c]{cc}%
	\frac{1}{\lambda_{1}} & 0\\
	0 & \frac{1}{\lambda_{2}}%
	\end{array}
	\right)  +\varepsilon\left(  \tau\left\vert \lambda_{2}\right\vert
	+\eta|\lambda_{1}|\right)  \left(
	\begin{array}
	[c]{cc}%
	\lbrack-\frac{6}{10},\frac{6}{10}] & [-50\frac{\eta}{\tau},50\frac{\eta}{\tau
	}]\\
	\lbrack-\frac{\tau}{\eta}\frac{6}{10},\frac{\tau}{\eta}\frac{6}{10}] &
	[-50,50]
	\end{array}
	\right)  .\nonumber
	\end{align}

	\subsection{The atlas\label{sec:atlas}}

	Let us start by introducing a notation $F_{\varepsilon}:\mathbb{T}^{1}%
	\times\mathbb{R}^{2}\rightarrow\mathbb{T}^{1}\times\mathbb{R}^{2}$ for the
	perturbed Henon map (\ref{eq:rotating-henon})%
	\[
	F_{\varepsilon}\left(  \theta,x,y\right)  =\left(  \theta+\omega
	,1+y-ax^{2}+\varepsilon\cos(2\pi\theta),bx\right)  .
	\]

	For computational reasons it will be convenient for us to choose
	$\Lambda=\mathbb{R}/v$ for some large (later specified) number
	$v\in \mathbb{N},$ $v\geq9.$ The $v$ will play the role of the
	rescaling parameter from Remark \ref{rem:rescaling}. We choose
	$p:\mathbb{T}^{1}\times \mathbb{R}^{2}\rightarrow\mathbb{R}/v$ as
	$p(\theta,x,y):=v\theta.$ For $i,j\in\{0,1,\ldots,v\}$ we define
	$V_{j},U_{i}\subset\Lambda$ as
	\begin{align}
	V_{j}  &  :=j+(0,5)\quad\text{mod }v,\label{eq:atlas-henon}\\
	U_{i}  &  :=i+(0,9)\quad\text{mod }v,\nonumber
	\end{align}
	and define maps $\eta_{i}:U_{i}\rightarrow\mathbb{R}$ as
	\begin{equation}
	\eta_{i}(i+x\text{ mod }v):=i+x.
	\label{eq:maps-henon-atl}%
	\end{equation}
	For all $i$ we define  quadratic forms $Q_{i,h},$ $Q_{i,\nu}$ as%
	\begin{align*}
	Q_{i,h}(\theta,x,y)  &  :=x^{2}-y^{2}-\theta^{2},\\
	Q_{i,\nu}(\theta,x,y)  &  :=x^{2}-y^{2}+\theta^{2}.
	\end{align*}
	For any $q=(\theta,x,y)\in N(\Lambda)=\Lambda\times B(0,1)\times B(0,1),$ for
	which $\theta\in\lbrack i,(i+1)]\text{ mod }v$, the pair $(V_{i-2},U_{i-4})$ is both a cones chart pair, as well as a cone enclosing
	pair for $q$ (see Example \ref{ex:sets}).

	We define $\phi:\mathbb{T}^{1}\times\mathbb{R}^{2}\rightarrow\Lambda
	\times\mathbb{R}^{2}$ as%
	\[
	\phi(\theta,x,y):=(v\theta,\Phi_{\varepsilon}\left(  x-x_{-},y-y_{-}\right)
	).
	\]
	Clearly%
	\[
	\phi^{-1}(\theta,x,y)=(\frac{1}{v}\theta,\Phi_{\varepsilon}^{-1}\left(
	x,y\right)  +(x_{-},y_{-})).
	\]
	Finally we define our set $|D|\subset\mathbb{T}^{1}\times\mathbb{R}^{2}$ as%
	\[
	|D|:=\phi^{-1}(N(\Lambda)).
	\]
	Let us note that for different $\varepsilon$ we will have different sets $|D|$.

	With
	the above notations we can see that $(D,\{\eta_{i},U_{i},V_{i},Q_{i,h}%
	,Q_{i,\nu}\})$ is a ch-set with cones.

	\subsection{Verification of the covering conditions.\label{sec:cover-ver}}

	We will show that
	\begin{align}
	&  D\overset{F_{\varepsilon}}{\Longrightarrow}D,\label{eq:forw-cover-hen}\\
	&  D\overset{F_{\varepsilon}^{-1}}{\Longrightarrow}D,
	\label{eq:back-cover-hen}%
	\end{align}
	(with the roles of the stable and unstable coordinates reversed for the inverse map).

	We will first apply Theorem \ref{th:diff-cover} to establish
	(\ref{eq:forw-cover-hen}). From the fact that $F_{\varepsilon}$ is a rotation
	on $\mathbb{T}^{1}$ and from the definition of the sets $V_{j},U_{i}$ (see
	(\ref{eq:atlas-henon})), for any $V_{j}\subset U_{i_{0}}$ there
	exists $i_{1}$ such that $F_{\varepsilon}(p^{-1}(V_{j})\cap|D|)\subset
	p^{-1}(U_{i_{1}}).$
	We will choose the parameters $\varepsilon_{u},\varepsilon_{s}$ for the
	estimates (\ref{eq:cor-exp}) and (\ref{eq:cor-contr}) to be independent from
	the choice of $j,i_{0},i_{1}.$ To simplify notations we will obtain these
	estimates using the map $\left(  F_{\varepsilon}\right)  _{\phi}$ rather than $\left(  F_{\varepsilon}\right)  _{i_{1}i_{0}}.$ We can do so since
	$\eta_{i_{0}},\eta_{i_{1}}$ are identity maps (see (\ref{eq:maps-henon-atl})).

	From the fact that $F_{0}(\theta,x_{-},y_{-})=\left(  \theta+\omega
	,x_{-},y_{-}\right)  ,$ for any $\theta\in \Lambda$ we have%
	\begin{align*}
	\left(  F_{\varepsilon}\right)  _{\phi}(\theta,0,0)  &  =\phi\circ
	F_{\varepsilon}\circ\phi^{-1}\left(  \theta,0,0\right) \\
	&  =\phi\circ F_{\varepsilon}\left(  \frac{1}{v}\theta,x_{-},y_{-}\right) \\
	&  =\phi\left(  F_{0}\left(  \frac{1}{v}\theta,x_{-},y_{-}\right)  +\left(
	0,\varepsilon\cos2\pi\frac{1}{v}\theta,0\right)  \right) \\
	&  =\left(  \theta+v\omega,\Phi_{\varepsilon}\left(  (0,0)+\left(
	\varepsilon\cos2\pi\frac{1}{v}\theta,0\right)  \right)  \right) \\
	&  =\left(  \theta+v\omega,-\frac{1}{\tau}\lambda_{1}\cos2\pi\frac{1}{v}%
	\theta,\frac{1}{\eta}\lambda_{2}\cos2\pi\frac{1}{v}\theta\right)  ,
	\end{align*}
	which gives us%
	\begin{equation}
	\left(  F_{\varepsilon}\right)  _{\phi}(\Lambda,0,0)\subset\Lambda
	\times\overline{B_{u}}\left(  0,\frac{1}{\tau}\left\vert \lambda
	_{1}\right\vert \right)  \times\overline{B_{s}}\left(  0,\frac{1}{\eta
	}\left\vert \lambda_{2}\right\vert \right)  . \label{eq:zero-image}%
	\end{equation}
	Let $g(\theta,x,y)=\left(  0,\cos(2\pi\theta),0\right)  ,$ we then have
	$F_{\varepsilon}=F_{0}+\varepsilon g$ and
	\begin{align}
	d\left(  F_{\varepsilon}\right)  _{\phi}  &  =d\left(  F_{0}\right)  _{\phi
	}+\varepsilon\text{diag}(v\text{id},\Phi_{\varepsilon})dg(\phi^{-1}%
	(\cdot))\text{diag}\left(  \frac{1}{v}\text{id},\Phi_{\varepsilon}^{-1}\right)
	\label{eq:diff-F-phi}\\
	&  =\left(
	\begin{array}
	[c]{cc}%
	1 & 0\\
	0 & d\tilde{F}%
	\end{array}
	\right)  +\left(
	\begin{array}
	[c]{ccc}%
	0 & 0 & 0\\
	\varepsilon\frac{2\pi\kappa}{v\tau}\lambda_{1}\sin2\pi\frac{1}{v}\theta & 0 &
	0\\
	-\varepsilon\frac{2\pi\kappa}{v\eta}\lambda_{2}\sin2\pi\frac{1}{v}\theta & 0 &
	0
	\end{array}
	\right)  .\nonumber
	\end{align}
	From (\ref{eq:der-encl-forw-n}) we have that for any $A\in\lbrack d\left(
	F_{\varepsilon}\right)  _{\phi}(N(\Lambda))]$%
	\begin{align}
	\inf\left\{  |A_{u}(0,x,y)|:|x|=1,|y|\leq1\right\}   &  \geq|\lambda
	_{1}|-\varepsilon\left(  \tau+\eta\right)  \frac{1}{2}\left(  1+\frac{\eta
	}{\tau}\right)  ,\label{eq:hen-temp1}\\
	\sup\left\{  |A_{s}(0,x,y):|x|\leq1,|y|\leq1|\right\}   &  \leq|\lambda
	_{2}|+\varepsilon\left(  \tau+\eta\right)  \frac{6}{1000}\left(  1+\frac{\tau
	}{\eta}\right)  .\nonumber
	\end{align}
	From (\ref{eq:zero-image}) and (\ref{eq:hen-temp1}), by Theorem
	\ref{th:diff-cover} (in our case $\varepsilon_{u}=\left\vert \lambda
	_{1}\right\vert /\tau,$ $\varepsilon_{s}=\left\vert \lambda_{2}\right\vert
	/\eta$) if we have
	\begin{align}
	|\lambda_{1}|-\varepsilon\left(  \tau+\eta\right)  \frac{1}{2}\left(
	1+\frac{\eta}{\tau}\right)   &  >1+\frac{1}{\tau}\left\vert \lambda
	_{1}\right\vert ,\label{eq:cover-est-henon1}\\
	|\lambda_{2}|+\varepsilon\left(  \tau+\eta\right)  \frac{6}{1000}\left(
	1+\frac{\tau}{\eta}\right)   &  <1-\frac{1}{\eta}\left\vert \lambda
	_{2}\right\vert , \label{eq:cover-est-henon2}%
	\end{align}
	then we have established (\ref{eq:forw-cover-hen}). The conditions
	(\ref{eq:cover-est-henon1}) and (\ref{eq:cover-est-henon2}) hold for all
	$\varepsilon\leq\frac{1}{2}$ with $\tau=3,$ $\eta=\frac{3}{40}.$

	To establish (\ref{eq:back-cover-hen}) we first compute%
	\begin{equation}
	F_{\varepsilon}^{-1}\left(  \theta,x,y\right)  =\left(  \theta-\omega,\frac
	{1}{b}y,x^{\prime}-1+\frac{a}{b^{2}}y^{2}-\varepsilon\cos\left(  2\pi\left(
	\theta-\omega\right)  \right)  \right)  . \label{eq:henon-inv-eps}%
	\end{equation}
	From the fact that $F_{\varepsilon}^{-1}$ is a rotation on $\mathbb{T}^{1}$
	and from the definition of the sets $V_{j},U_{i}$ (see (\ref{eq:atlas-henon}))
	we have that for any $V_{j}\subset U_{i_{0}}$ there exists $i_{1}$ such that
	$F_{\varepsilon}^{-1}(p^{-1}(V_{j})\cap|D|)\subset p^{-1}(U_{i_{1}}).$

	Once again, to simplify notations, we will consider $\left(  F_{\varepsilon
	}^{-1}\right)  _{\phi}$ instead of $\left(  F_{\varepsilon}^{-1}\right)
	_{i_{0}i_{1}}.$ Using the fact that $F_{0}^{-1}(\theta,x_{-},y_{-}%
	)=(\theta-\omega,x_{-},y_{-})$ we have%
	\begin{align*}
	\left(  F_{\varepsilon}^{-1}\right)  _{\phi}(\theta,0,0)  &  =\phi\circ
	F_{\varepsilon}^{-1}\circ\phi^{-1}\left(  \theta,0,0\right) \\
	&  =\phi\left(  F_{0}^{-1}(\theta,x_{-},y_{-})+\left(  0,0,\varepsilon
	\cos\left(  2\pi\left(  \frac{1}{v}\theta-\omega\right)  \right)  \right)
	\right) \\
	&  =\left(  \theta-v\omega,\left(  \Phi_{\varepsilon}(0,0)+\left(
	0,\varepsilon\cos\left(  2\pi\left(  \frac{1}{v}\theta-\omega\right)  \right)
	\right)  \right)  \right) \\
	&  =\left(  \theta-v\omega,-\frac{1}{\tau}\cos\left(  2\pi\left(
	\theta-\omega\right)  \right)  ,\frac{1}{\eta}\cos\left(  2\pi\left(  \frac
	{1}{v}\theta-\omega\right)  \right)  \right)  ,
	\end{align*}
	hence%
	\begin{equation}
	\left(  F_{\varepsilon}\right)  _{\phi}^{-1}(\Lambda,0,0)\subset\Lambda\times
	B\left(  0,\frac{1}{\tau}\right)  \times B\left(  0,\frac{1}{\eta}\right)  .
	\label{eq:zero-image-inv}%
	\end{equation}
	Let $g^{-}(\theta,x,y)=\left(  0,0,-\varepsilon\cos\left(  2\pi\left(
	\theta-\omega\right)  \right)  \right)  ,$ we then have $F_{\varepsilon}%
	^{-1}=F_{0}^{-1}+\varepsilon g^{-}$ and
	\begin{align}
	d\left(  F_{\varepsilon}^{-1}\right)  _{\phi}  &  =d\left(  F_{0}^{-1}\right)
	_{\phi}+\varepsilon\text{diag}(v\text{id},\Phi_{\varepsilon})dg^{-}(\phi
	^{-1}(\cdot))\text{diag}\left(  \frac{1}{v}\text{id},\Phi_{\varepsilon}%
	^{-1}\right) \label{eq:diff-F-phi-back}\\
	&  =\left(
	\begin{array}
	[c]{cc}%
	1 & 0\\
	0 & d\tilde{F}^{-1}%
	\end{array}
	\right)  +\left(
	\begin{array}
	[c]{ccc}%
	0 & 0 & 0\\
	-\frac{2\pi\varepsilon\kappa}{v\tau}\sin\left(  2\pi\left(  \frac{1}{v}%
	\theta-\omega\right)  \right)  & 0 & 0\\
	\frac{2\pi\varepsilon\kappa}{v\eta}\sin\left(  2\pi\left(  \frac{1}{v}%
	\theta-\omega\right)  \right)  & 0 & 0
	\end{array}
	\right)  .\nonumber
	\end{align}
	From (\ref{eq:der-encl-back}) we know that for any $A\in\lbrack D\left(
	F_{\varepsilon}^{-1}\right)  _{\phi}(N(\Lambda))]$ we have (let us note that
	the roles of the stable and unstable coordinates have been exchanged with
	respect to the forward map)%
	\begin{align*}
	\inf\left\{  |A_{u}(0,x,y)|:|x|=1,|y|\leq1\right\}   &  \geq\left\vert
	\frac{1}{\lambda_{2}}\right\vert -\varepsilon\left(  \tau\left\vert
	\lambda_{2}\right\vert +\eta|\lambda_{1}|\right)  \left(  \frac{\tau}{\eta
	}\frac{6}{10}+50\right)  ,\\
	\sup\left\{  |A_{s}(0,x,y)|:|x|\leq1,|y|\leq1\right\}   &  \leq\frac
	{1}{\lambda_{1}}+\varepsilon\left(  \tau\left\vert \lambda_{2}\right\vert
	+\eta|\lambda_{1}|\right)  \left(  \frac{6}{10}+50\frac{\eta}{\tau}\right)  .
	\end{align*}
	Hence from (\ref{eq:zero-image-inv}), by Theorem \ref{th:diff-cover} (in our
	case $\varepsilon_{u}=1/\eta$ and $\varepsilon_{s}=1/\tau$), if we have
	\begin{align}
	\left\vert \frac{1}{\lambda_{2}}\right\vert -\varepsilon\left(  \tau\left\vert
	\lambda_{2}\right\vert +\eta\lambda_{1}\right)  \left(  \frac{\tau}{\eta}%
	\frac{6}{10}+50\right)   &  >1+\frac{1}{\eta},\label{eq:cover-est-henon3}\\
	\frac{1}{\lambda_{1}}+\varepsilon\left(  \tau\left\vert \lambda_{2}\right\vert
	+\eta\lambda_{1}\right)  \left(  \frac{6}{10}+50\frac{\eta}{\tau}\right)   &
	<1-\frac{1}{\tau}, \label{eq:cover-est-henon4}%
	\end{align}
	then we have established (\ref{eq:back-cover-hen}). The conditions
	(\ref{eq:cover-est-henon3}) and (\ref{eq:cover-est-henon4}) hold for
	$\varepsilon\leq\frac{1}{2}$ with $\tau=3,$ $\eta=\frac{3}{40}.\,$

	\subsection{Verification of cone conditions}

	We will now use Theorem \ref{th:cone-ineq} to verify cone conditions. For any
	$x\in|D|$ we can choose a cone enclosing pair $(V_{j_{0}},U_{i_{0}})$ for $x$ (see
	(\ref{eq:atlas-henon})). From the fact that $F_{\varepsilon}$ is a rotation on
	$\mathbb{T}^{1}$ we have%
	\[
	\pi_{\theta}(F_{\varepsilon})_{\phi}(V_{j})\subset V_{j}+\omega v\text{ mod
	}v.
	\]
	If $x\in|D|,$ $F_{\varepsilon}(x)\in|D|,$ $\pi_{\theta}(\phi(F_{\varepsilon
	}(x)))\in\lbrack i,i+1]$ and $(V_{j_{0}},U_{i_{0}})$ is a cone enclosing pair for $x,$
	then we can find a $U_{i_{1}}$ for which we will both have
	\begin{align*}
	(V_{j}+\omega v\text{ mod }v)  &  \subset U_{i_{1}},\\
	\lbrack i-2,i+3]\text{ mod }v  &  \subset U_{i_{1}}.
	\end{align*}
	Setting $V_{j_{1}}=[i-2,i+3]$ mod $v$ we have found a cone enclosing pair $(V_{j_{1}%
	},U_{i_{1}})$ for $F_{\varepsilon}(x),$ for which $F_{\varepsilon}%
	(p^{-1}(V_{j})\cap|D|)\subset p^{-1}(U_{i_{1}})$. An analogous argument holds
	for $F_{\varepsilon}^{-1}.$

	Now we will verify (\ref{eq:cone-est1}) for the forward map $F_{\varepsilon}.$
	Our estimates will be independent from the choice of $j_{0},i_{0},i_{1},$
	hence as in Section \ref{sec:cover-ver} we will consider the map
	$(F_{\varepsilon})_{\phi}$ instead of $(F_{\varepsilon})_{i_{1}i_{0}}.$ From
	(\ref{eq:diff-F-phi}) and (\ref{eq:dF-forw}) we have
	\[
	d\left(  F_{\varepsilon}\right)  _{\phi}=\left(
	\begin{array}
	[c]{cc}%
	1 & 0\\%
	\begin{array}
	[c]{c}%
	\varepsilon\frac{2\pi\kappa}{v\tau}\lambda_{1}\sin2\pi\frac{1}{v}\theta\\
	-\varepsilon\frac{2\pi\kappa}{v\eta}\lambda_{2}\sin2\pi\frac{1}{v}\theta
	\end{array}
	& J+R_{\varepsilon}%
	\end{array}
	\right)  .
	\]
	This by (\ref{eq:der-encl-forw-n}) means that our constants from Theorem
	\ref{th:cone-ineq} are as follows%
	\[%
	\begin{array}
	[c]{lll}%
	C=1, & \varepsilon_{c}=0, & \mu=0,\\
	M=\left\vert \varepsilon\frac{2\pi\kappa}{v\tau}\lambda_{1}\right\vert , &
	\mathcal{A}=\lambda_{1}+\frac{1}{2}\varepsilon\left\vert \tau+\eta\right\vert
	, & \varepsilon_{u}=\frac{1}{2}\varepsilon\left\vert \frac{\eta}{\tau}\left(
	\tau+\eta\right)  \right\vert ,\\
	\alpha=\lambda_{1}-\frac{1}{2}\varepsilon\left\vert \tau+\eta\right\vert  &
	\varepsilon_{s}=\frac{6}{1000}\varepsilon\left\vert \frac{\tau}{\eta}\left(
	\tau+\eta\right)  \right\vert , & \beta=\left\vert \lambda_{2}\right\vert
	+\frac{6}{1000}\varepsilon\left\vert \tau+\eta\right\vert .
	\end{array}
	\]
	By choosing $v$ sufficiently large we can reduce $M$ arbitrarily close to
	zero. We also note that $\varepsilon_{c}=0$. This means that conditions
	(\ref{eq:cone-est1}) reduce to
	\begin{align}
	C^{2}  &  <m,\nonumber\\
	\alpha^{2}-\varepsilon_{s}^{2}-\varepsilon_{u}\mathcal{A}-\varepsilon_{s}\beta
	&  >m,\label{eq:cones-reduced}\\
	\beta^{2}+\varepsilon_{u}\mathcal{A}+\varepsilon_{s}\beta &  <m.\nonumber
	\end{align}
	These conditions hold for $\varepsilon\leq\frac{1}{2}$ with $\tau=3,$
	$\eta=\frac{3}{40}$ and $m=2$.

	Now we turn to the conditions for the inverse map. From
	(\ref{eq:diff-F-phi-back}) and (\ref{eq:dF-inv}) we have%
	\[
	d\left(  F_{\varepsilon}^{-1}\right)  _{\phi}=\left(
	\begin{array}
	[c]{cc}%
	1 & 0\\%
	\begin{array}
	[c]{c}%
	-\varepsilon\frac{2\pi\kappa}{v\tau}\sin\left(  2\pi\left(  \frac{1}{v}%
	\theta-\omega\right)  \right) \\
	\varepsilon\frac{2\pi\kappa}{v\eta}\sin\left(  2\pi\left(  \frac{1}{v}%
	\theta-\omega\right)  \right)
	\end{array}
	& J^{-1}+R_{\varepsilon}^{\prime}%
	\end{array}
	\right)  .
	\]
	This by (\ref{eq:der-encl-back}) means that our constants from Theorem
	\ref{th:cone-ineq} are as follows (note that for the inverse map the roles of
	the stable and unstable directions are reversed)%
	\[%
	\begin{array}
	[c]{ll}%
	C=1, & \varepsilon_{c}=0,\\
	M=\left\vert \varepsilon\frac{2\pi\kappa}{v\tau}\right\vert , & \mathcal{A}%
	=\frac{1}{\lambda_{2}}+50\varepsilon\left(  \tau\left\vert \lambda
	_{2}\right\vert +\eta|\lambda_{1}|\right)  ,\\
	\alpha=\frac{1}{\lambda_{2}}-50\varepsilon\left(  \tau\left\vert \lambda
	_{2}\right\vert +\eta|\lambda_{1}|\right)  , & \varepsilon_{s}=\varepsilon
	\left(  \tau\left\vert \lambda_{2}\right\vert +\eta|\lambda_{1}|\right)
	\left\vert 50\frac{\eta}{\tau}\right\vert ,\\
	\varepsilon_{u}=\varepsilon\left(  \tau\left\vert \lambda_{2}\right\vert
	+\eta|\lambda_{1}|\right)  \left\vert \frac{\tau}{\eta}\frac{6}{10}\right\vert
	,\quad & \beta=\frac{1}{\lambda_{1}}+\varepsilon\left(  \tau\left\vert
	\lambda_{2}\right\vert +\eta|\lambda_{1}|\right)  \frac{6}{10},\\
	\mu=0. &
	\end{array}
	\]
	By choosing $v$ sufficiently large we can once again reduce $M$ arbitrarily
	close to zero, which means that conditions (\ref{eq:cone-est1}) reduce to
	conditions same as (\ref{eq:cones-reduced}). These conditions hold for
	$\varepsilon\leq\frac{1}{2}$ with $\tau=3,$ $\eta=\frac{3}{40}$ and $m=200$.

	\subsection{The estimate of the region in which the manifold is contained}

	So far we have shown that for $\varepsilon\leq\frac{1}{2}$ our map
	$F_{\varepsilon}$ satisfies forward and backward cone conditions. This means
	that we have an invariant manifold inside of%
	\[
	|D|=\phi^{-1}\left(  N(\Lambda)\right)  .
	\]
	This gives us the following bounds
	\begin{align*}
	D  &  =\phi^{-1}\left(  N(\Lambda)\right) \\
	&  =\mathbb{T}^{1}\times\{\left(  x_{-},y_{-}\right)  +\Phi_{\varepsilon}%
	^{-1}\left(  \overline{B}(0,1)\times\overline{B}(0,1)\right)  \}\\
	&  \subset\mathbb{T}^{1}\times\{(x_{-},y_{-})+[-\varepsilon|\kappa|(\tau
	+\eta),\varepsilon|\kappa|(\tau+\eta)]\\
	&  \quad\times\lbrack-\varepsilon|\kappa|\left(  \tau|\lambda_{2}%
	|+\eta|\lambda_{1}|\right)  ,\varepsilon|\kappa|\left(  \tau|\lambda_{2}%
	|+\eta|\lambda_{1}|\right)  ]\}.
	\end{align*}
	With $\tau=3$ and $\eta=\frac{3}{40}$ we have $|D|\subset U_{\varepsilon}$
	(where $U_{\varepsilon}$ is given by (\ref{eq:U-epsilon})).

	%TCIDATA{OutputFilter=latex2.dll}
	%TCIDATA{Version=5.00.0.2606}
	%TCIDATA{LaTeXparent=0,0,online-edit.tex}
	\section{Properties of the local Brouwer degree\label{sec:deg-prop}}

	\textbf{Solution property. }\cite{L}%
	\[
	\text{If deg}(f,D,c)\neq0\text{ then there exists an }x\in D\text{
	with }f(x)=c.
	\]

	\textbf{Homotopy property.} \cite{L} Let $H:[0,1]\times
	D\rightarrow R^{n}$ be continuous. Suppose that
	\begin{equation}
	\bigcup_{\lambda\in\lbrack0,1]}H_{\lambda}^{-1}(c)\cap
	D\quad\text{is compact}
	\label{eq:union}%
	\end{equation}
	then
	\[
	\forall\lambda\in\lbrack0,1]\quad\text{deg}(H_{\lambda},D,c)=\text{deg}%
	(H_{0},D,c)
	\]
	If $[0,1]\times\overline{D}\subset$dom$(H)$ and $\overline{D}$ is
	compact, then (\ref{eq:union}) follows from the condition
	\[
	c\notin H([0,1],\partial D).
	\]

	\textbf{Degree property for affine maps.} \cite{L} Suppose that
	$f(x)=B(x-x_{0})+c$, where $B$ is a linear map and $x_{0}\in
	R^{n}.$ If the equation $B(x)=0$ has no nontrivial solutions (i.e
	if $Bx=0$, then $x=0$) and $x_{0}\in D$, then
	\begin{equation}
	\text{deg}(f,D,c)=\text{sgn}(\text{det}B). \label{eq:deg(f,C)
	=sgn(det(B))}%
	\end{equation}

	\textbf{Excision property.} \cite{L} Suppose that we have an open
	set $E$ such
	that $E\subset D$ and%
	\[
	f^{-1}(c)\cap D\subset E,
	\]
	then%
	\[
	\text{deg}(f,D,c)=\text{deg}(f,E,c).
	\]

%%%%%%%%%%%%%%%%%%%%%%%%%%%%%%%%%

\medskip
% The data information below will be filled by AIMS editorial staff
Received xxxx 20xx; revised xxxx 20xx.
\medskip

\end{document}